\pgfplotsset{compat=1.14}
\numberwithin{equation}{section}%If in an article you would like equations numbered by section
\newtheorem{thm}{Theorem}[section]
\newtheorem{prop}[thm]{Proposition}
\newtheorem{lem}[thm]{Lemma}
\newtheorem{cor}[thm]{Corollary}
\newtheorem{exm}[thm]{Example}
\newtheorem{df}[thm]{Definition}
\newcommand {\n} {\mathbb{N}}
\newcommand{\q}{Q_{(H,\delta_{V(H)},\delta_{E(H)})}}
\newcommand{\lap}{L_{(H,\delta_{V(H)},\delta_{E(H)})}}
\newcommand{\adj}{A_{(H,\delta_{V(H)},\delta_{E(H)})}}
\tikzstyle{none}=[]
\tikzstyle{new style 0}=[draw,circle,fill=white]
\tikzstyle{new edge style 1}=[draw,dashed]
\tikzstyle{new edge style 1}=[draw,dashed]
\tikzstyle{new edge style 0}=[->]
\def\mf{\mathfrak}
\def\Lh{Let $H$ be a hypergraph. }
\begin{document}
%\onehalfspacing
\title[Some building blocks of hypergraphs]{On some building blocks of hypergraphs}
	%\title{Spectra of Hypergraphs from equivalence relations}
	\author[Banerjee]{Anirban Banerjee} 
	\email[Banerjee ]{\textit {{\scriptsize anirban.banerjee@iiserkol.ac.in}}}
	\author[ Parui]{Samiron Parui} 
	\email[ Parui ]{\textit {{\scriptsize  samironparui@gmail.com}}}

	\address[Banerjee, Parui]{Department of Mathematics and Statistics, Indian Institute of Science Education and Research Kolkata, Mohanpur-741246, India}

	%\address[ Parui]{Department of Mathematics and Statistics, Indian Institute of Science Education and Research Kolkata, Mohanpur-741246, India}
	
	\date{\today}
	\keywords{Eigenvalues of hypergraphs; Units in hypergraphs; 
 %Equivalence relation; 
 Symmetries in hypergraph; Equivalence relation-compatible operators; General operators associated with hypergraphs; Random walks on hypergraphs; 
 %Chromatic number of hypergraphs; Unit-distance %{ Hypergraphs, Eigenvalues of hypergraphs, units in hypergraphs,twin units, matrices of hypergraph, adjacency Matrix, Laplacian Matrix, eigenvalues, vertex degrees, Enumeration in hypergraph theory, hypergraph-automorphism, Random walk on hypergraphs,Unit-distance,Equitable partition in hypergraph, Graphs and linear algebra}
}	
	\subjclass[2020]{Primary 	% 05C82,  %Small world graphs, complex networks (graph-theoretic aspects)
		05C65, % Hypergraphs
  05C69,  %Vertex subsets with special properties (dominating sets, independent sets, cliques, etc.)
		05C15  	%Coloring of graphs and hypergraphs; 
  Secondary %37C99, %Smooth dynamical systems 
		05C50, %Graphs and linear algebra (matrices, eigenvalues,etc.), 
		%39A12, %Discrete version of topics in analysis
  05C70,  %Edge subsets with special properties (factorization, matching, partitioning, covering and packing, etc.)
  	05C81,  %Random walks on graphs
		 	05C07,%Vertex degrees
	%	34D06, %Synchronization of solutions to ordinary differential equations
		%92B25 %Biological rhythms and synchronization
		 	 05C30,% 	Enumeration in graph theory
     	05C12  	%Distance in graphs
	}
	\vspace{-5pt}
	\maketitle
	%\tableofcontents
\hrule
\begin{abstract}
  In this study, we explore the interrelation between hypergraph symmetries represented by equivalence relations on the vertex set and the spectra of operators associated with the hypergraph. 
  We introduce the idea of equivalence relation compatible operators related to hypergraphs. 
  Some eigenvalues and the corresponding eigenvectors can be computed directly from the equivalence classes of the equivalence relation. The other eigenvalues can be computed from a quotient operator obtained by identifying each equivalence class as an element. The equivalence classes of specific equivalence relations on the vertex set determine the structures of the hypergraph. We collectively classify them as building blocks of hypergraphs. For instance, an equivalence relation $\mf{R}_s$ on the vertex set of a hypergraph is such that two vertices are $\mathfrak{R}_s$-related if they belong to the same set of hyperedges. %The Adjacency, Laplacian, and signless Laplacian operators associated with that hypergraph become $\mf{R}_s$-compatible.
  The $\mf{R}_s$-equivalence classes are named as units. Using units, we explore another symmetric substructure of hypergraphs called twin units. We show that these building blocks leave certain traces in the spectrum and the corresponding eigenspaces of the $\mf{R}_s$-compatible operators associated with the hypergraph. We also show that, conversely, some specific footprints in the spectrum and the corresponding eigenvectors retrace the presence of some of these building blocks in the hypergraph. Besides the spectra of $\mf{R}_s$-compatible operators, building blocks are also interrelated with hypergraph colouring, distances in hypergraphs, hypergraph automorphisms, and random walks on hypergraphs.
\end{abstract}
\vspace{5pt}
\hrule
\section{Introduction}
The interrelation between the structural symmetry of a graph and the spectra of matrices associated with the graph is one of the classical directions in spectral graph theory. Symmetries in the graph structures are represented using automorphisms, equitable partitions, divisors of a graph, quotient matrices, twin vertices, etc.~and the manifestation of these symmetric structures in the spectra of matrices associated with the graph are documented in \cite{Steve-Butler-2010-edgecover,Trevisan_symm_simax_2016,Oscar-rojo-balanced-tree-2005,mehatari-banerjee-motif2015,Vladimir-R-Covering-automorphisms,Benjamin-webb-gen-equitable-2019,Cvetkovic-1980}. % In an equitable partition of a graph $G$, the identification of some subset of vertices leads to a quotient matrix related to a 
 In an equitable partition of a graph $G$, identifying all the vertices within each part of the equitable partition leads to a quotient matrix. This quotient matrix corresponds to a
smaller graph (possibly a directed multi-graph or a weighted graph), called a divisor of the graph $G$ (\cite[Section 4.1]{Cvetkovic-1980}). Each eigenvalue of the quotient matrix is also an eigenvalue of the adjacency matrix of $G$(\cite[Theorem 4.8.]{Cvetkovic-1980}). A similar instance is evident in an equitable decomposition of a graph due to a graph automorphism (\cite{Benjamin-webb-equitable-2016,Benjamin-webb-gen-equitable-2019}). In an equitable decomposition, besides the quotient matrix, automorphism orbits induce smaller matrices whose eigenvalues are also the eigenvalues of a matrix associated with the original graph. In this paper, we %show a spectral decomposition for matrices associated with hypergraphs. We 
use equivalence relation on the vertex set to capture hypergraph symmetries. Some eigenvalues of matrices associated with hypergraphs can be obtained from the equivalence classes, while the rest of the eigenvalues are the eigenvalues of the quotient matrix obtained by the identification of vertices in the same equivalence class.

% The eigenvalues related to these symmetric structures are mainly of two types. The eigenvalues of the first type and their eigenspaces depend only on a cluster of vertices related to the symmetry. For example, in a graph, two non-adjacent vertices $u$ and $v$ are called a pair of \emph{twin vertices} if they have the same set of neighbours. A pair of twin vertices in a graph $G$ corresponds to two equal rows in the adjacency matrix $G$ and thus corresponds to a $0$-eigenvcalue of the adjacency matrix. Similarly, a pair of twin vertices induces an eigenvalue $p$ in the Laplacian and the signless Laplacian matrices, where $p$ is the number of neighbours in each of the vertices. For each of the three matrices, the eigenvector corresponding to these eigenvalues is a vector that has only two non-zero entries corresponding to the pair of twin vertices.
A hypergraph $H$ is an ordered pair  $(V(H), E(H))$, where the \emph{vertex set}, $V(H)$ is a non-empty set and the \emph{hyperedge set}, $E(H)$ is a collection of non-empty subsets of $V(H)$. Various matrices, such as adjacency matrix, Laplacian matrix, signless Laplacian matrix, etc., are used to investigate hypergraphs (\cite{hg-mat,MR4107825,up2021,rodriguez2003Laplacian,cardoso2019signless,feng1996spectra}). In this study, we delve into the interplay between the structure of hypergraphs and the spectral properties exhibited by various hypergraph matrices, including the adjacency matrix, Laplacian matrix, and signless Laplacian matrix. Our approach involves delineating distinct substructures within hypergraphs through the lens of equivalence relations on vertex sets. We show that some specific hypergraph substructures lead to particular eigenvalues with specific eigenspaces of hypergraph matrices. %On the other hand, specific spectral properties of hypergraph matrices indicate the existence of particular substructures in hypergraphs.
The incidence structure of a vertex $v\in V(H)$ is represented by the \emph{star of the vertex} $v$, $E_v(H)=\{e\in E(H):v\in e\}$. A maximal collection of vertices that have the same star is called a \emph{unit} in $H$. Each unit with at least two vertices corresponds to a particular eigenvalue of specific hypergraph matrices.

If symmetric substructures of a hypergraph induce symmetry in an operator associated with that hypergraph, they
 leave their traces in the spectrum of that operator. %To the best of our knowledge, little work has been done on the correlation of the  structural symmetry and spectral properties of a hypergraph (\cite{song2021spectral,taranenko2022perfect,jin2018equitable}). Still, they focus on the spectra of tensors associated with hypergraphs.
 We decompose a hypergraph into symmetric substructures using equivalence relations. 
 % We introduce the idea of equivalence relation compatible operators  Here,
 %To capture the symmetry in the operator induced by some symmetric substructures of hypergraphs encoded via equivalence relations, we introduce the notion of \emph{equivalence relation compatible operators associated with hypergraphs}
 These hypergraph symmetries, related to an equivalence relation, are encoded in a set of matrices (or operators) associated with the hypergraph. We refer to this set of operators as the \emph{equivalence relation compatible operators} (see \Cref{df_eq_cmpt_op}). %Our study reveals that
 %Suppose an operator associated with a hypergraph is compatible with an equivalence relation. In that case, 
 The complete spectrum of equivalence relation compatible operators associated with hypergraphs can be described in terms of %the equivalence classes of 
 the equivalence relation. Suppose $H$ is a hypergraph, and $\mf{R}$ is an equivalence relation on $V(H)$.
 The complete spectrum of an $\mf{R}$-compatible operator associated with a hypergraph can be computed in two parts. The first part is associated with the local symmetries represented by each equivalence class (\Cref{equivalence-eig}, \Cref{Tw}).
 %to the symmetric structures of a hypergraph associated with the equivalence classes. 
 In the second part, the remaining eigenvalues can be computed from the quotient operator obtained from the equivalence relation (\Cref{gen-contract-eig}). These local symmetries, represented by equivalence class, determine the structure of hypergraphs. We classify them as {building blocks of hypergraphs}. \emph{Unit} is one of these building blocks.
 Using our introduced methods for the spectral study of the equivalence relation compatible operators, in \Cref{1.unit}, we provide an equivalence relation $\mf{R}_s$ (see \Cref{df_unit}) such that the general signless Laplacian, Laplacian, and adjacency associated with a hypergraph are compatible with the equivalence relation $\mf{R}_s$. We refer to each $\mf{R}_s$-equivalence class as a \emph{unit}. Units in a hypergraph leave traces in the spectra of adjacency, Laplacian, and signless Laplacian associated with the hypergraph (\Cref{cor_unit_eig}, \Cref{com-unil-a}, and \Cref{com-unil-a}). Conversely, some information about the units is encoded in the spectra of these operators (\Cref{cor_unit_eig}). %The idea of units leads us to some other symmetries in hypergraphs. We refer to them as twin units, regular sets, and co-regular sets in hypergraphs. Unit and the other above-mentioned symmetric structures are collectively classified as \emph{building blocks}. Like units, the other building blocks are also interrelated with the spectra of Adjacency, Laplacian, and signless Laplacian associated with the hypergraphs. Besides the spectra of these operators, many other properties of a hypergraph, like the colouring of a hypergraph, hypergraph automorphisms, and random walks on a hypergraph, depend on the units of the hypergraph (see \Cref{app}). A brief outline of this paper is given below.
This section concludes by revisiting certain basic ideas that will be frequently referenced throughout the subsequent sections. Let $H$ be a hypergraph.
%For any $ v\in V(H)$, we define the \emph{star of $v$} is $E_v(H)=\{e\in E(H):v\in e\}$.
For $v_1,\ldots,v_k\in V(H)$, the \emph{star of $v_1,\ldots,v_k $} is $E_{v_1,\ldots,v_k}(H)=E_{v_1}(H)\cap\ldots\cap E_{v_k}(H)$, and for any $U\subseteq V(H)$, we define the \emph{star of $U$} is $E_U(H)$. 
Suppose that $V$ is a non-empty finite set. The set of all complex-valued functions on $V$, denoted by $\mathbb{C}^V$, is a vector space of dimension $|V|%$. Throughout this article cardinality of any set $S$ is denoted as $|S|
	$. For any $x\in\mathbb{C}^{V}$, \textit{the support of} $ x$  is $supp(x)=\{v\in V:x(v)\ne 0\} $. For any none empty $U\subseteq V$ the characteristic function $\chi_{U}:V(H)\to\{0,1\}$ is defined as $\chi_U(v)=1 $ if $v\in U$, and $\chi_U(v)=0$ if $v\notin u$.
A hypergraph is called a \emph{finite hypergraph} if $V(H)$ is a finite set. Here, we consider only finite hypergraphs. 
We denote the set of all the functions on $V(H)$ and $E(H)$ as $\mathbb{C}^{V(H)}$ and $\mathbb{C}^{E(H)}$, respectively. If $H$ is a hypergraph with $n$ vertices, and $m$ hyperedges then  $\mathbb{C}^{V(H)}$ and $\mathbb{C}^{E(H)}$, are isomorphic to respectively, $\mathbb{C}^n$, and $\mathbb{C}^m$. Thus, using an ordering of the vertex set $V(H)=\{v_1,\ldots, v_n\}$ each function $x\in \mathbb{C}^{V(H)}$ can be represented as a column vector $x\in\mathbb{C}^n$ with its $i$-th component $x_i=x(v_i)$. Here, we use the function and the column vector form of $x\in \mathbb{C}^{V(H)}$ as our convenience.

For any $u,v(\ne u)\in V(H)$, the function $x_{uv}:V(H)\to \{0,1\}$ is defined by $x_{uv}=\chi_{\{v\}}-\chi_{\{u\}}$.
 For any $x\in \mathbb{C}^{V(H)}$, the \emph{support of $x$} is $supp(x)=\{v\in V(H):x(v)\ne 0\}.$
For instance, the support of $x_{uv} $ is $supp(x_{uv})=\{u,v\}$, and $\sum\limits_{w\in supp(x_{uv})}x_{uv}(w)=0$.
For $U\subseteq V(H)$, we define the vector space $S_U$ as 
$S_U=\{x\in\mathbb{C}^{V(H)}:supp(x)\subseteq U, \text{~and~} \sum\limits_{v\in U}x(v)=0\} .$ If we enumerate $U$ as $U=\{u_0,\ldots,u_k\}$, then $\{x_{u_0u_i}:i=1,\ldots,k\}$ is a basis of $S_U$. Thus, the dimension of $S_U$ is $|U|-1$.

Suppose that $\delta_{V(H)}:V(H)\to (0,\infty)$, and $\delta_{E(H)}:E(H)\to (0,\infty)$ are two positive valued functions. The general operators are defined in \cite{up2021} using these positive valued functions as follows.
The general signless Laplacian operator $Q_{(H,\delta_{V(H)},\delta_{E(H)})}:\mathbb{C}^{V(H)}\to \mathbb{C}^{V(H)}$ is defined as
$$(Q_{(H,\delta_{V(H)},\delta_{E(H)})}x)(v)=\sum\limits_{e\in E_v(H)}\frac{\delta_{E(H)}(e)}{\delta_{V(H)}(v)}\frac{1}{|e|^2}\sum\limits_{u\in e}x(u),$$
and the general Laplacian operator $L_{(H,\delta_{V(H)},\delta_{E(H)})}:\mathbb{C}^{V(H)}\to \mathbb{C}^{V(H)}$ is defined as
$$ (L_{(H,\delta_{V(H)},\delta_{E(H)})}x)(v)=\sum\limits_{e\in E_v(H)}\frac{\delta_{E(H)}(e)}{\delta_{V(H)}(v)}\frac{1}{|e|^2}\sum\limits_{u\in e}(x(v)-x(u)).$$
The general adjacency operator $A_{(H,\delta_{V(H)},\delta_{E(H)})}:\mathbb{C}^{V(H)}\to \mathbb{C}^{V(H)}$ is given by
$$	(A_{(H,\delta_{V(H)},\delta_{E(H)})}x)(v)=\sum\limits_{e\in E_v(H)}\frac{\delta_{E(H)}(e)}{\delta_{V(H)}(v)}\frac{1}{|e|^2}\sum\limits_{u\in e;u\neq v}x(u).$$
We use these general operators for our spectral investigation because, for different choices of $(\delta_{V(H)},\delta_{E(H)})$, these operators become different variations of the signless Laplacian, Laplacian and adjacency operators associated with hypergraphs. Thus, the study on the general operators does the same for all their variations.
Here we consider the functions $\sigma_H:E(H)\to(0,\infty)$ and $ \rho_H:E(H)\to(0,\infty)$ defined as $\sigma_H(e)=\frac{\delta_{E(H)}(e)}{|e|^2}$ and $\rho_H(e)=\frac{\delta_{E(H)}(e
		)}{|e|}$  respectively.

\begin{exm}[Particular cases of general operators]\label{cases}\rm
    Some particular cases of the general operators that already exist in the literature are listed below. \begin{enumerate}[leftmargin=*]
\item If we take $\delta_{V(H)}(v)=1$ for all $v\in V(H)$, and $\delta_{E(H)}(e)=|e|^2$ for all $e\in E(H)$, then the operator $L_{(H,\delta_{V(H)},\delta_{E(H)})}$ becomes the Laplacian matrix, described in \cite{rodriguez2003Laplacian,rodriguez2009Laplacian}. Here, for the choice of  $\delta_{V(H)}(v)=1$ and $\delta_{E(H)}(e)=|e|^2$, we denote 
$Q_{(H,\delta_{V(H)},\delta_{E(H)})},L_{(H,\delta_{V(H)},\delta_{E(H)})}$, and $A_{(H,\delta_{V(H)},\delta_{E(H)})}$ as $Q_{(R,H)},L_{(R,H)}$, and $A_{(R,H)}$, respectively.
    \item If we choose $\delta_{V(H)}(v)=1$ for all $v\in V(H)$ and $\delta_{E(H)}(e)=\frac{|e|^2}{|e|-1} $ for all $e\in E(H)$, the operators $L_{(H,\delta_{V(H)},\delta_{E(H)})}$, and $A_{(H,\delta_{V(H)},\delta_{E(H)})}$ become the operators described in \cite{MR4208993,MR4107825,banerjee2020synchronization}. For this choice of $\delta_{V(H)}$ and $\delta_{E(H)}$, we denote $Q_{(H,\delta_{V(H)},\delta_{E(H)})},L_{(H,\delta_{V(H)},\delta_{E(H)})}$, and $A_{(H,\delta_{V(H)},\delta_{E(H)})}$ as $Q_{(B,H)},L_{(B,H)}$, and $A_{(B,H)}$, respectively.
    \item For the same choice of $\delta_{E(H)}$, if we set $\delta_{V(H)}(v)=|E_v(H)|$, for all $v\in V(H)$, then $L_{(H,\delta_{V(H)},\delta_{E(H)})}$ becomes the normalized Laplacian described in \cite{MR4208993}. For this choice of $\delta_{V(H)}, \delta_{E(H)}$, we denote $Q_{(H,\delta_{V(H)},\delta_{E(H)})},L_{(H,\delta_{V(H)},\delta_{E(H)})}$, and $A_{(H,\delta_{V(H)},\delta_{E(H)})}$ as $Q_{(N,H)},L_{(N,H)}$, and $A_{(N,H)}$, respectively.
\end{enumerate}
\end{exm}
 Let $T:\mathbb{C}^{V(H)}\to\mathbb{C}^{V(H)}$ be a linear operator associated with a hypergraph $H$. If for all $x:V(H)\to \mathbb{C}$, and for any $u\in V(H)$ the value $(Tx)(u)=\sum_{v\in V(H)}t_{uv}x(v)$, then the operator $T$ can be represented as the matrix $T=[t_{uv}]_{u,v\in V(H)}$. The matrix representation of $T$ is associated with a basis of $\mathbb{C}^{V(H)}$. Unless otherwise is mentioned, here, matrix representation of $T$ means matrix representation with respect to the basis $\mathcal{B}=\{\mathds{1}_v:v\in V(H)\}$, where for all $v\in V(H)$, the vector $\mathds{1}_v:V(H)\to\{0,1\}$ is defined by $\mathds{1}_v(u)=1$ if $u=v$, and $\mathds{1}_v(u)=0$ for all $u\in V(H)\setminus \{v\}$. For a finite graph $H$, a linear operator associated with $H$ and a matrix associated with $H$ are equivalent notions. In this work, we will use both, adapting to the specific requirements of different situations. For instance, the matrix representation of $Q_{(H,\delta_{V(H)},\delta_{E(H)})}$ is $Q_{(H,\delta_{V(H)},\delta_{E(H)})}=[q_{uv}]_{u,v\in V(H)}$ is such that $q_{uv}=\sum\limits_{e\in E_u(H)\cap E_v(H)}\frac{\delta_{E(H)}(e)}{\delta_{V(H)}(u)}\frac{1}{|e|^2}$. For the general adjacency, $A_{(H,\delta_{V(H)},\delta_{E(H)})}=[a_{uv}]_{u,v\in V(H)}$ is such that the diagonal entries are $a_{uu}=0$ for all $u\in V(H)$, and for any two distinct $u,v\in V(H)$, the non-diagonal entry $a_{uv}=q_{uv}$. The matrix representation of the general Laplacian $L_{(H,\delta_{V(H)},\delta_{E(H)})}=[l_{uv}]_{u,v\in V(H)}$ is such that for any $u\in V(H)$, the diagonal position $l_{uu}=\sum\limits_{v(\ne u)\in V(H)}q_{uv}$, and $l_{uv}=-q_{uv}$ for two distinct $u,v\in V(H)$.

\section{Equivalence Relation-Compatible operators associated with Hypergraphs}\label{sec-equi-rel}
Given a graph $G$, a pair of distinct vertices $u,v$ are called \emph{duplicate vertices} if they are non-adjacent and have the same set of neighbours. Consider an equivalence relation $\mathfrak{R}_d$ on the vertex set of the graph such that $(u,v)\in \mathfrak{R}_d$ if and only if $ u,v$ is a pair of duplicate vertices. Since each $\mathfrak{R}_d$-equivalence class $W$ corresponds to $|W|$ number of equal columns in the adjacency matrix, if $|W|>1$, the equivalence class $W$ leads to an eigenvalue $0$ of multiplicity $|W|-1$. Furthermore, the partition of the vertex set due to the equivalence relation $\mathfrak{R}_d$ is an equitable partition. This leads to a quotient matrix of the adjacency matrix of the graph. Besides the graph adjacency matrix, similar spectral behaviour is also observed for the Laplacian matrix, signless Laplacian matrix, and other matrices associated with the graph (see \cite{mehatari-banerjee-motif2015,Faria-vector,Benjamin-webb-equitable-2016}, and references therein). Now, in this section, we show that the symmetry encoded in an equivalence relation on the vertex set of a hypergraph is reflected in the spectra of a specific family of matrices associated with the hypergraph. 

\Lh 
For any equivalence relation $\mf{R}$ on $V(H)$, we denote the collection of all $\mf{R}$-equivalence classes as $\mf{C}_{\mf{R}}$. 
\begin{df}[Equivalence relation-compatible linear operator]\label{df_eq_cmpt_op}
\Lh
For any equivalence relation $\mf{R}$ on $V(H)$, a linear map $T:\mathbb{C}^{V(H)}\to \mathbb{C}^{V(H)}$ is called \emph{$\mf{R} $-compatible} if for all $W\in \mf{C}_{\mf{R}}$, and for any pair  $\{u,v\}\subseteq W$,
$$ (Tx)\circ \phi_{uv}=T (x\circ\phi_{uv}),$$
where $x\in\mathbb{C}^{V(H)}$, and $\phi_{uv}:V(H)\to V(H)$ is defined by 
$$\phi_{uv}(w)=
\begin{cases}
v&\text{~if~}w=u,\\
u&\text{~if~}w=v,\\
w&\text{~otherwise.~}\\
\end{cases}
$$
\end{df}
For any $W\in \mf{C}_{\mf{R}}$, each two-element subset  $\{u,v\}\subseteq W$ leads us to the map $S_{uv}:\mathbb{C}^{V(H)}\to\mathbb{C}^{V(H)}$ which is defined by $S_{uv}(x)=x\circ\phi_{uv}$. Thus, for any equivalence relation $\mf{R}$ on $V(H)$, a linear map $T:\mathbb{C}^{V(H)}\to \mathbb{C}^{V(H)}$ is $\mf{R} $-compatible if $ T\circ S_{uv}=S_{uv}\circ T$ for all $\{u,v\}\subseteq W$. That is, the diagram in \Cref{fig:equiv-comp} commutes for all $\{u,v\}\subseteq W$, and $W\in \mf{C}_{\mf{R}}$.
\begin{figure}[ht]
    \centering
  \begin{tikzcd}[column sep=large]
\mathbb{C}^{V(H)} \ar[r,"T"] \ar[d,"S_{uv}"] & \mathbb{C}^{V(H)} \ar[d,"S_{uv}"] \\
\mathbb{C}^{V(H)} \ar[r,"T"] & \mathbb{C}^{V(H)}
\end{tikzcd}
    \caption{$ T\circ S_{uv}=S_{uv}\circ T.$}
    \label{fig:equiv-comp}
\end{figure}
For example, consider the hypergraph $H$ with the vertex set $V(H)=\{1,2,3,4\}$, and the hyperedge set $E(H)=\{e_1=\{1,2,3\},e_2=\{1,3,4\},e_3=\{1,4,2\}\}$. The relation $\mathfrak{R}=\{(1,1),(2,2),(3,3),(4,4),(2,3),(3,2),(3,4),(4,3),(2,4),(4,2)\}$ is an equivalence relation on $V(H)$ with the collection of equivalence classes $\mf{C}_{\mf{R}}=\{\{1\},\{2,3,4\}\}$. The matrix $L_{(R,H)}=\left[\begin{smallmatrix}
  \phantom{-} 6 & -2 & -2 & -2\\ -2 &  \phantom{-} 4 & -1 & -1\\ -2 & -1 &  \phantom{-} 4 & -1\\ -2 & -1 & -1 &  \phantom{-} 4
\end{smallmatrix}\right].$ Therefore, for any $x:V(H)\to\mathbb{C}$, it holds that $(L_{(R,H)}x)(2)=-2x(1)+4x(2)-x(3)-x(4) $, and $(L_{(R,H)}x)(3)=-2x(1)+4x(3)-x(2)-x(4) $. Consequently, the interchange of $x(2)$, and $x(3)$ leads to the interchange of $(L_{(R,H)}x)(2) $, and $(L_{(R,H)}x)(3)$. That is, $ L_{(R,H)}\circ S_{23}=S_{32}\circ L_{(R,H)}$.

 Given two equivalence relation $\mathfrak{R}_1$, and $\mathfrak{R}_2$ on $V(H)$, the equivalence relation $\mathfrak{R}_1$ is called \emph{finer} than $\mathfrak{R}_2$ if for each $\mathfrak{R}_1$-equivalence class $W$, there exists an $\mathfrak{R}_2$ equivalence class $W'$ such that $W\subseteq W'$. For example, given a hypergraph $H$, suppose that two relations $\mathfrak{R}_1$, and $\mathfrak{R}_2$ on $V(H)$ are defined by $(u,v)\in\mathfrak{R}_1$ if and only if $E_u(H)=E_v(H)$, and $ (u,v)\in\mathfrak{R}_2$ if and only if $u$, and $v$ have the same degree, that is both $u$, and $v$ are incident with the same number of hyperedges in $H$. Both the relations $\mathfrak{R}_1$, and $\mathfrak{R}_2$ are equivalence relations on $V(H)$. Moreover, since $E_u(H)=E_v(H)$ means $u$, and $v$ have the same degree, $\mathfrak{R}_1$ is finer than $\mathfrak{R}_2$.
Given any linear operator $T$ associated with a hypergraph $H$, the following proposition provides an equivalence relation $\mathfrak{R}_T$ such that $T$ is $\mathfrak{R}_T$-compatible. Moreover, we show that if any equivalence relation $\mathfrak{R}$ is finer than $\mathfrak{R}_T$, then $T$ is  $\mathfrak{R}$-compatible.
 \begin{prop}\label{equivalence-exm-prop}
     Let $T:\mathbb{C}^{V(H)}\to\mathbb{C}^{V(H)}$ be a linear operator with matrix representation
     $T=[t_{uv}]_{u,v\in V(H)}$. The relation $\mathfrak{R}_T=\{(u,v)\in V(H)\times V(H):t_{uu}=t_{vv}, t_{uv}=t_{vu}, \text{~and~}t_{uw}=t_{vw},t_{wu}=t_{wv}\text{~for~}w\in V(H)\setminus\{u,v\}\}$ is an equivalence relation and $T$ is $\mathfrak{R}_T$-compatible linear operator. For any equivalence relation $\mathfrak{R}$ on $V(H)$, if $\mathfrak{R}$ is finer than $\mathfrak{R}_T$, then $T$ is $\mathfrak{R}$-compatible linear operator.
 \end{prop}
 \begin{proof}
      If $W\in \mf{C}_{\mf{R}}$ contains at least $2$ elements and $u,v(\in W)$ are two distinct vertices, then for any $w\in V(H)\setminus \{u,v\}$ we have $t_{wu}=t_{wv}$. Thus, for any $x\in \mathbb{C}^{V(H)}$, if $w\notin\{u,v\}$, then $t_{wu}x(u)+t_{wv}x(v)=t_{wv}(x\circ \phi_{uv})(v)+t_{wu}(x\circ \phi_{uv})(u)$ and therefore,
     \begin{align*}
    & ((Tx)\circ\phi_{uv})(w)=  (Tx)(w)=\sum_{w'\in V(H)}t_{ww'}x(w')=\sum_{w'\in V(H)}t_{ww'}(x\circ \phi_{uv})(w')=(T(x\circ\phi_{uv}))(w).
     \end{align*}
    Now, $t_{uv}=t_{vu}$, and $t_{uu}=t_{vv}$, lead us to $ t_{uu}x(u)+t_{uv}x(v)=t_{vv}x(u)+t_{vu}x(v)=t_{vv}(\phi_{uv}\circ x)(v)+t_{vu}(\phi_{uv}\circ x)(u)$. 
    Therefore, $t_{uw}=t_{vw}$ for all $w\notin \{u,v\}$ implies that
    \begin{align*}
        &((Tx)\circ\phi_{uv})(v)=(Tx)(u)=\sum_{w\in V(H)}t_{uw}x(w)=\sum_{w\in V(H)}t_{vw}(x\circ\phi_{uv})(w)=(T(x\circ\phi_{uv}))(v).
    \end{align*}
    Similarly we can show that $ ((Tx)\circ\phi_{uv})(u)=(T(x\circ\phi_{uv}))(u)$. Therefore, $T$ is $\mathfrak{R}_T$-compatible.
    For any equivalence relation $\mathfrak{R}$ on $V(H)$ which is finer than $\mathfrak{R}_T$, if $W$ is an  $\mathfrak{R}$-equivalence class then $W\subseteq W'$ where $W'$ is an $\mathfrak{R}_T$-equivalence class. Therefore, for all $u,v\in W\subseteq W'$, we have $(Tx)\circ\phi_{uv}=T(x\circ\phi_{uv})$. That is $T$ is $\mathfrak{R}$-compatible.
 \end{proof}
 We demonstrate the use of the \Cref{equivalence-exm-prop} in the following example.
 \begin{exm}\label{equivalence-comp-exm}\rm Let $H$ be a hypergraph with
 $V(H)=\{u_1,v_1,w_1,u_2,v_2,w_2\}$, $E(H)=\{e_1=\{u_1,u_2,v_2,w_2\},$ $e_2=\{v_1,u_2,v_2,w_2\},$$e_3=\{w_1,u_2,v_2,w_2\},$$e_4=\{u_1,v_1,w_1\}\}$. Consider the partition $V(H)=V_1\cup V_2$, where $V_1=\{u_1,v_1,w_1\}$, and $V_2=\{u_2,v_2,w_2\}$. Let $\mathfrak{R}=\{(u,v)\in V_i\times V_i, i=1,2\}$ be the equivalence relation induced by the partition $V(H)=V_1\cup V_2$. For the hypergraph matrix $A_{(R,H)}=[a^c_{uv}]_{u,v\in V(H)}$, since the diagonal entries of $A_{(R,H)}$ are $0$, and for $u,v(\ne u)\in V(H)$, the $(u,v)$-th entry is $a_{uv}=|E_u(H)\cap E_v(H)|$.  Now for  $u\in V_1$, if $v(\ne u)\in V_1$, then $|E_u(H)\cap E_v(H)|=1$, otherwise if   $v(\ne u)\in V_2$, then $|E_u(H)\cap E_v(H)|=1$. For $u\in V_2$, if $v(\ne u)\in V_1$, then $|E_u(H)\cap E_v(H)|=1$, otherwise if   $v(\ne u)\in V_2$, then $|E_u(H)\cap E_v(H)|=3$. Therefore, each of the $\mathfrak{R}$-equivalence class (that is, each of $V_1$, and $V_2$) lie inside an $\mathfrak{R}_{A_{(R,H)}}$-equivalence class. That is $\mathfrak{R}$ is finer than $\mathfrak{R}_{A_{(R,H)}}$. Therefore, by the \Cref{equivalence-exm-prop}, $A_{(R,H)}$ is  $\mathfrak{R}$-compatible. Similarly, we can show the same fact for the matrices $Q_{(R,H)}$ and $L_{(R,H)}$.
 \end{exm}
 In general, an equivalence $\mathfrak{R}$ on $V(H)$ does not necessarily have a relation to the hyperedges. But if a hypergraph operator $T$ is related to the hyperedges, and $T$ is compatible with $\mathfrak{R}$, then this compatibility means $\mathfrak{R}$ has some relations with the hyperedges. For example, consider the adjacency matrix $A_{(R,H)}=[a_{uv}]_{u,v\in V(H)}$, where for distinct $u,v\in V(H)$, the non-diagonal entry $a_{uv}=|E_u(H)\cap E_v(H)|$, the number of hyperedges that contain both $u,v$ and all the diagonal entries are $0$ (cf.\cite{bretto2013hypergraph}). If $A_{(R,H)}$ is compatible with an equivalence relation $\mathfrak{R}$ on $V(H)$, then $(A_{(R,H)}x)\circ \phi_{uv}=A_{(R,H)}(x\circ\phi_{uv})$ leads us to $|E_u(H)\cap E_w(H)|=|E_v(H)\cap E_w(H)|$ for all $W\in \mf{C}_{\mf{R}}$, $\{u,v\}\subseteq W$, and $w\notin\{u,v\}$. In the upcoming sections, we are going to show some equivalence relations such that specific hypergraph matrices are compatible with these equivalence relations. The \Cref{equivalence-exm-prop} will be used to show these matrices are compatible with specific equivalence relations. Now, we prove some results related to the spectra of equivalence relation compatible operators.
 The existence of a set of duplicate vertices \( W \) in a graph $G$ with $|W|>1$ leads to a $0 $ eigenvalue of the adjacency matrix of $G$.
 %If a set \( W \) of duplicate vertices exists in a graph with $|W|>1$, it causes the graph's adjacency matrix to have a 0 eigenvalue.
 Similarly, here we find that for a hypergraph \( H \), if \( \mathfrak{R} \) is an equivalence relation on the vertex set \( V(H) \), then any \( \mathfrak{R} \)-equivalence class \( W \) with \( |W| > 1 \) corresponds to an eigenvalue of every \( \mathfrak{R} \)-compatible operator associated with \( H \).
\begin{thm}[Equivalence class-eigenvalue theorem ]\label{equivalence-eig}
\Lh
If $\mf{R}$ is an equivalence relation on $V(H)$ and $T$ is an $\mf{R} $-compatible linear operator, then for any $W\in \mf{C}_{\mf{R}}$ with $|W|>1$,
$(T(x_{uv}))(v)$ is an eigenvalue of $T$ with an eigenvector $x_{uv}$, where distinct vertices $ u,v\in W$.
\end{thm}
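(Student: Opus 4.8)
The plan is to exploit the fact that $S_{uv}$ is an involution and to read off $T(x_{uv})$ from its eigenspace decomposition, using the commutation $T\circ S_{uv}=S_{uv}\circ T$ that $\mf{R}$-compatibility provides. First I would note that $\phi_{uv}$ is an involution of $V(H)$, so $S_{uv}^2=\mathrm{Id}$ on $\rv{H}$; consequently $S_{uv}$ is diagonalizable with eigenvalues in $\{+1,-1\}$, and $\rv{H}$ splits as the direct sum of the corresponding eigenspaces.

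Next I would compute these eigenspaces explicitly from the definition $S_{uv}(x)=x\circ\phi_{uv}$. A direct evaluation shows that the $(+1)$-eigenspace is $\{x\in\rv{H}:x(u)=x(v)\}$, while the $(-1)$-eigenspace is $\{x\in\rv{H}:x(w)=0 \text{ for all } w\notin\{u,v\}\text{, and } x(u)=-x(v)\}$. The key point, which is the crux of the argument, is that this $(-1)$-eigenspace is exactly one-dimensional and is spanned by the Faria vector $x_{uv}$; indeed a short check gives $S_{uv}(x_{uv})=x_{uv}\circ\phi_{uv}=-x_{uv}$, confirming $x_{uv}$ lies in it, and the support constraint forces the dimension to be one.

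Then I would bring in the commutation relation. Since $\{u,v\}\subseteq W$ for some $W\in\mf{C}_{\mf{R}}$ and $T$ is $\mf{R}$-compatible, we have $T\circ S_{uv}=S_{uv}\circ T$. Applying both sides to $x_{uv}$ and using $S_{uv}(x_{uv})=-x_{uv}$ yields $S_{uv}\bigl(T(x_{uv})\bigr)=T\bigl(S_{uv}(x_{uv})\bigr)=-T(x_{uv})$, so $T(x_{uv})$ also lies in the $(-1)$-eigenspace of $S_{uv}$. Because that eigenspace is spanned by $x_{uv}$, there is a scalar $\lambda$ with $T(x_{uv})=\lambda\,x_{uv}$, exhibiting $x_{uv}$ as an eigenvector of $T$.

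Finally I would identify the eigenvalue by evaluating at $v$: since $x_{uv}(v)=1$, the identity $T(x_{uv})=\lambda\,x_{uv}$ gives $\bigl(T(x_{uv})\bigr)(v)=\lambda$, which is precisely the claimed eigenvalue. The only genuinely non-routine ingredient is recognizing that the $(-1)$-eigenspace of $S_{uv}$ is forced to be one-dimensional; once that is established, the commutation relation pins $T(x_{uv})$ down to a scalar multiple of $x_{uv}$ with no further work, and the remaining steps are direct verifications.
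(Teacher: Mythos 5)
Your proof is correct and is essentially the paper's argument in different clothing: the paper's computation that $(Tx_{uv})\circ\phi_{uv}=-Tx_{uv}$ forces $(Tx_{uv})(w)=0$ for $w\notin\{u,v\}$ and $(Tx_{uv})(v)=-(Tx_{uv})(u)$ is precisely your statement that $T(x_{uv})$ lies in the one-dimensional $(-1)$-eigenspace of $S_{uv}$ spanned by $x_{uv}$. The only difference is packaging: you organize the same pointwise checks into the spectral decomposition of the involution $S_{uv}$, which makes the mechanism (commuting operators preserve eigenspaces) explicit but adds nothing beyond the paper's inline verification.
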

\begin{proof}
Since $T$ is a $\mf{R} $-compatible linear operator, 
\begin{align*}
    (Tx_{uv})\circ \phi_{uv}&=T ( x_{uv}\circ \phi_{uv})= (Tx_{vu})=-(Tx_{uv})
\end{align*}
Thus for all $w\notin\{u,v\}$, 
\begin{align*}
         &((Tx_{uv})\circ \phi_{uv})(w)=-(Tx_{uv})(w)\\
         &\implies ((Tx_{uv})(w)=-(Tx_{uv})(w)
\end{align*}
Thus, $(Tx_{uv})(w)=0$, for all $w\notin\{u,v\}$.
Moreover,
\begin{align*}
    &((Tx_{uv})\circ \phi_{uv})(u)=-(Tx_{uv})(u)\\
     &\implies ((Tx_{uv}) (v)=-(Tx_{uv})(u).
\end{align*}
Therefore, $Tx_{uv}=[(Tx_{uv})(v)](x_{uv} )$.
\end{proof}
Since $\mf{C}_{\mf{R}}$ is the collection of all the $\mf{R}$-equivalence classes, \Cref{equivalence-eig} provides $|V(H)|-|\mf{C}_{\mf{R}}|$ number of eigenvalues of any $\mf{R} $-compatible operator. 

\begin{lem}\label{const-lem}
\Lh
If $\mf{R}$ is an equivalence relation on $V(H)$ and, $W\in\mf{C}_{\mf{R}}$ with $|W|>1$. For any $\mf{R} $-compatible linear operator  $T$, and any three distinct vertices $u,u^\prime,v \in W$, 
$(T(x_{uv}))(v)=(T(x_{u^\prime v}))(v)$.
\end{lem}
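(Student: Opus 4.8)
The plan is to exploit the transposition $\phi_{uu'}$ that interchanges the two vertices $u$ and $u'$. Since both $u$ and $u'$ lie in the same class $W$, the pair $\{u,u'\}\subseteq W$, so by $\mf{R}$-compatibility the operator $T$ commutes with $S_{uu'}$. I would then show that this swap carries the Faria vector $x_{uv}$ to $x_{u'v}$ while fixing $v$; combined with \Cref{equivalence-eig}, which says that $x_{uv}$ and $x_{u'v}$ are both eigenvectors of $T$, this should force the two associated eigenvalues to coincide.

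First I would record, from \Cref{equivalence-eig}, that $Tx_{uv}=[(Tx_{uv})(v)]\,x_{uv}$ and $Tx_{u'v}=[(Tx_{u'v})(v)]\,x_{u'v}$. Writing $\lambda=(Tx_{uv})(v)$ and $\mu=(Tx_{u'v})(v)$ for brevity, the goal becomes to prove $\lambda=\mu$.

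The key computational step is the identity $x_{uv}\circ\phi_{uu'}=x_{u'v}$, equivalently $S_{uu'}(x_{uv})=x_{u'v}$. Since $u,u',v$ are pairwise distinct, $\phi_{uu'}$ interchanges $u$ and $u'$ and fixes $v$; hence evaluating $x_{uv}\circ\phi_{uu'}$ at $u'$ gives $x_{uv}(u)=-1$, at $v$ gives $x_{uv}(v)=1$, and at every remaining vertex gives $0$, which is precisely $x_{u'v}$. This verification is routine, but it is the crux of the argument, as it is exactly what allows the commuting relation for $\phi_{uu'}$ to transport spectral data from one Faria vector to the other; the only point needing care is that $v\notin\{u,u'\}$, guaranteed by the pairwise distinctness hypothesis.

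Finally I would invoke $\mf{R}$-compatibility for the pair $\{u,u'\}$, namely $T\circ S_{uu'}=S_{uu'}\circ T$. Applying this to $x_{uv}$ and using the previous two steps,
\begin{align*}
Tx_{u'v}=T\bigl(S_{uu'}(x_{uv})\bigr)=S_{uu'}\bigl(Tx_{uv}\bigr)=(\lambda\,x_{uv})\circ\phi_{uu'}=\lambda\,(x_{uv}\circ\phi_{uu'})=\lambda\,x_{u'v}.
\end{align*}
Evaluating both sides at $v$ and using $x_{u'v}(v)=1$ gives $\mu=(Tx_{u'v})(v)=\lambda$, which is the desired equality.
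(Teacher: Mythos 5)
Your proof is correct and follows essentially the same route as the paper: both rest on applying the $\mf{R}$-compatibility relation for the transposition $\phi_{uu'}$ (legitimate since $u,u'\in W$) to $x_{uv}$, together with the identity $x_{uv}\circ\phi_{uu'}=x_{u'v}$ and the fact that $\phi_{uu'}$ fixes $v$. The only cosmetic difference is that you route the conclusion through the eigenvector relation of \Cref{equivalence-eig}, obtaining the full vector identity $Tx_{u'v}=\lambda x_{u'v}$ before evaluating at $v$, whereas the paper simply evaluates the commuting relation at $v$ directly.
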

\begin{proof}
    Since $T$ is a $\mf{R} $-compatible linear operator, $ (Tx)\circ \phi_{uv}=T (x\circ\phi_{uv})$ for all $x\in\mathbb{C}^{V(H)}$, and $u,v\in W$.
    Therefore, 
    \begin{align*}
       & ( (Tx_{uv})\circ \phi_{uu^\prime})(v)=(T (x_{uv}\circ\phi_{uu^\prime}))(v)\\
         & \implies ( (Tx_{uv})(v)=(T (x_{u^\prime v}))(v).
       \end{align*}
       This completes the proof.
\end{proof}
Thus the previous result suggests that for an equivalence relation $\mf{R}$ on $V(H)$ and an $\mf{R} $-compatible linear operator  $T$, each $W\in\mf{C}_{\mf{R}}$ with $|W|>1$ corresponds to a constant, $\lambda_{(W,T)}=(Tx_{uv})(v)$, %such that $(Tx_{uv})(v)= \lambda_{(W,T)}$,
for all $u\in W\setminus\{v\}$.
Since $\{x_{uv}:u\in W\setminus\{v\}\}$ is a basis of $S_W$, we have the following result.
\begin{thm}[Equivalence class-eigenspace Theorem]\label{Tw}
    \Lh
If $\mf{R}$ is an equivalence relation on $V(H)$, $T$ is an $\mf{R} $-compatible linear operator, 
 and $W\in\mf{C}_{\mf{R}}$ with $|W|>1$, then $\lambda_{(W,T)}$ is an eigenvalue of $T$ with the multiplicity at least $|W|-1$, and $S_W$ is a subspace of the eigenspace of $\lambda_{(W,T)}$.
\end{thm}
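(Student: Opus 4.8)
The plan is to assemble the conclusion from the two results that immediately precede this theorem, namely \Cref{equivalence-eig} and \Cref{const-lem}, together with the already-recorded fact that $\{x_{uv}:u\in W\setminus\{v\}\}$ is a basis of $S_W$ whenever $v\in W$ is fixed. The statement packages three assertions: that $\lambda_{(W,T)}$ is an eigenvalue, that its multiplicity is at least $|W|-1$, and that $S_W$ sits inside the corresponding eigenspace. I expect all three to fall out once I show that every basis vector $x_{uv}$ of $S_W$ is an eigenvector for the single common eigenvalue $\lambda_{(W,T)}$.

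First I would fix an arbitrary $v\in W$ and consider the family $\{x_{uv}:u\in W\setminus\{v\}\}$. By \Cref{equivalence-eig}, each $x_{uv}$ is an eigenvector of $T$ with eigenvalue $(Tx_{uv})(v)$, since $u,v$ both lie in the same equivalence class $W$. By \Cref{const-lem}, this eigenvalue is independent of the choice of $u$: for any two distinct $u,u'\in W\setminus\{v\}$ we have $(Tx_{uv})(v)=(Tx_{u'v})(v)$, and both equal the constant $\lambda_{(W,T)}$ by its definition. Hence every vector in this basis is an eigenvector of $T$ associated with the \emph{same} eigenvalue $\lambda_{(W,T)}$.

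Next I would invoke the linearity of the eigenspace. Since each of the $|W|-1$ basis vectors of $S_W$ lies in $\ker(T-\lambda_{(W,T)}\,\mathrm{Id})$, and this kernel is a subspace, every linear combination of them — that is, every element of $S_W$ — also lies in that kernel. This gives simultaneously that $S_W$ is contained in the eigenspace of $\lambda_{(W,T)}$ and that $\lambda_{(W,T)}$ is genuinely an eigenvalue of $T$. Because $\{x_{uv}:u\in W\setminus\{v\}\}$ is a linearly independent basis of $S_W$, the eigenspace has dimension at least $\dim S_W=|W|-1$, which is exactly the asserted lower bound on the multiplicity.

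I do not anticipate a serious obstacle here; the theorem is essentially a corollary that repackages \Cref{equivalence-eig} and \Cref{const-lem}. The only point requiring a little care is to confirm that the common eigenvalue is well defined, i.e.\ that the earlier discussion legitimately attaches a single scalar $\lambda_{(W,T)}$ to the class $W$ rather than to a particular pair $\{u,v\}$; this is precisely what \Cref{const-lem} secures, and the preceding remark already records it. A secondary subtlety is that the multiplicity is only claimed to be \emph{at least} $|W|-1$, so I need not rule out the possibility that eigenvectors from other classes, or vectors outside any $S_W$, also contribute to the eigenvalue $\lambda_{(W,T)}$ — the subspace containment of $S_W$ suffices for the stated lower bound.
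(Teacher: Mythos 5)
Your proposal is correct and follows essentially the same route as the paper: both invoke \Cref{equivalence-eig} and \Cref{const-lem} to show that the Faria-vector basis $\{x_{uv}:u\in W\setminus\{v\}\}$ of $S_W$ consists of eigenvectors for the common eigenvalue $\lambda_{(W,T)}$, whence $S_W$ lies in that eigenspace and the multiplicity is at least $|W|-1$. Your explicit remarks about the well-definedness of $\lambda_{(W,T)}$ and the closure of the kernel under linear combinations merely spell out steps the paper leaves implicit.
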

\begin{proof}
    Let $W=\{v_0,\ldots, v_k\}$. By \Cref{equivalence-eig}, and \Cref{const-lem}, we have $\lambda_{(W,T)}$ is an eigenvalue of $T$. Since for $i=1,\ldots,k$,  $x_{v_0v_i}$ is an eigenvector corresponding to the eigenvalue $\lambda_{(W,T)}$, $S_W$ is a subspace of the eigenspace of $\lambda_{(W,T)}$.
\end{proof}
From now onwards, we refer to the constant $\lambda_{(W,T)}$ as the \emph{eigenvalue of $T$ corresponding to the equivalence class $W$}.
%Now we show that the converse of \Cref{Tw} is also true.
\begin{exm}
     Consider the hypergraph $H$ and the equivalence relation $\mathfrak{R}$ described in the \Cref{equivalence-comp-exm}. That example shows that $A_{(R,H)}$ is an $\mathfrak{R}$-compatible operator. By the \Cref{Tw}, the  $\mathfrak{R}$-equivalence classes $V_1$, and $V_2$ corresponds to the eigenvalue $\lambda_{(V_1,A_{(R,H)})}=-1 $, and $\lambda_{(V_2,A_{(R,H)})}=-3 $ with multiplicity at least $2$ for each.
\end{exm}
Using \Cref{equivalence-eig}, and \Cref{Tw}, we have  $|V(H)|-|\mf{C}_{\mf{R}}|$  number of eigenvalues of an $\mf{R}$-compatible linear operator $T$. For the remaining eigenvalues, we have the following result.

\begin{lem}\label{lem_char}
        Let $H$ be a hypergraph, $\mf{R}$ be an equivalence relation on $V(H)$, and $T$ be an $\mf{R} $-compatible linear operator. If $\mf{C}_\mf{R}=\{W_{1},\ldots, W_{m}\}$, then there exists a matrix 
$[c^i_j]_{i,j=1}^m$ such that $T\chi_{_{W_i}}=\sum\limits_{j=1}^mc^i_j\chi_{_{W_j}}$.
\end{lem}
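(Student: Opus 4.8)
The plan is to show that the image $T\chi_{W_i}$ is constant on each equivalence class $W_j$, which is exactly the statement that $T\chi_{W_i}$ lies in the span of $\{\chi_{W_1},\ldots,\chi_{W_m}\}$. The key observation is that the characteristic function $\chi_{W_i}$ is invariant under every swap map $\phi_{uv}$ coming from its own class, and under every swap map coming from any other class as well. Indeed, for $\{u,v\}\subseteq W_j$ we have $\chi_{W_i}\circ\phi_{uv}=\chi_{W_i}$: if $j\neq i$ then $u,v\notin W_i$, so $\phi_{uv}$ moves no vertex of $W_i$ and fixes the indicator; if $j=i$ then $\phi_{uv}$ merely swaps two vertices both lying in $W_i$, so the indicator is again unchanged. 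Hence $\chi_{W_i}$ is fixed by $S_{uv}$ for every admissible pair in every class.

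From here I would exploit $\mf{R}$-compatibility in the form $T\circ S_{uv}=S_{uv}\circ T$. First I would fix an index $j$ and any pair $\{u,v\}\subseteq W_j$ and compute
\begin{align*}
(T\chi_{W_i})\circ\phi_{uv}=S_{uv}(T\chi_{W_i})=T(S_{uv}\chi_{W_i})=T(\chi_{W_i}\circ\phi_{uv})=T\chi_{W_i},
\end{align*}
using the invariance just established. Thus $T\chi_{W_i}$ is fixed by $\phi_{uv}$, meaning $(T\chi_{W_i})(u)=(T\chi_{W_i})(v)$ for every pair of vertices $u,v$ lying in a common class $W_j$. Since this holds for an arbitrary class $W_j$ and an arbitrary pair inside it, the function $T\chi_{W_i}$ takes a single value $c^i_j$ on all of $W_j$, for each $j=1,\ldots,m$.

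Finally, because $\{W_1,\ldots,W_m\}$ partitions $V(H)$, writing $c^i_j$ for the common value of $T\chi_{W_i}$ on $W_j$ gives the pointwise identity $T\chi_{W_i}=\sum_{j=1}^m c^i_j\chi_{W_j}$, with each $c^i_j\in\R$, as required. The proof is essentially immediate once the right invariance is spotted; the only mild subtlety, and the step I would state carefully, is the dual use of the swap map: $\chi_{W_i}$ is invariant under swaps inside $W_i$ (two-element permutation of a set) and under swaps inside every other class (support disjointness). There is no real obstacle here, and no case analysis on $|W_j|$ is needed, since the single-pair invariance argument covers classes of every size uniformly.
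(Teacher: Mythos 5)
Your proof is correct and follows essentially the same route as the paper's: both rest on the observation that $\chi_{W_i}\circ\phi_{uv}=\chi_{W_i}$ for every admissible swap (whether inside $W_i$ or inside another class), then apply $\mf{R}$-compatibility to conclude $T\chi_{W_i}$ is constant on each equivalence class. The only difference is cosmetic index bookkeeping.
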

\begin{proof}
    Since $T$ is $\mf{R} $-compatible, for all $W_i,W_j\in \mf{C}_{\mf{R}}$, and $u,v\in W_{i}$, 
    \begin{equation}\label{lem-main}
   (T\chi_{_{W_j}})\circ \phi_{uv}=T (\chi_{_{W_j}}\circ\phi_{uv}).
   \end{equation}
    Since $\chi_{_{W_j}}\circ\phi_{uv}=\chi_{_{W_j}}$, by \Cref{lem-main} we have  $ (T\chi_{_{W_j}})\circ \phi_{uv}=T(\chi_{_{W_j}})$. Thus, $(T(\chi_{_{W_j}}))(u)=(T(\chi_{_{W_j}}))(v)$ for all $u,v\in W_i$. That is, $(T(\chi_{_{W_j}}))(u)=c^j_i$, a constant for all $u\in W_i$.  Therefore, there exists a real $c^i_j$, for all $i,j=1,\ldots,m$, such that, 
    $T\chi_{_{W_i}}=\sum\limits_{j=1}^mc^i_j\chi_{_{W_j}}.$
    This completes the proof.
\end{proof}
Suppose that $\mathbb{C}^{\mf{C}_{\mf{R}}}$ is the vector space of all complex-valued functions on $\mf{C}_{\mf{R}}$  of the form $y:\mf{C}_{\mf{R}}\to \mathbb{C}$. If $\epsilon_{W_i}:\mf{C}_{\mf{R}}\to \{0,1\}$ is defined by $\epsilon_{W_i}(W_j)=1$ if $i=j$, and otherwise $\epsilon_{W_i}(W_j)=0$,
% $$\epsilon_{W_i}(W_j)=\begin{cases}
%     1&\text{~if~} i=j,\\
%      0&\text{~otherwise,~}
% \end{cases}$$
then $\{\epsilon_{W_i}:i=1,\ldots,m\}$ is a basis of $\mathbb{C}^{\mf{C}_{\mf{R}}}$.
Upon identifying each $\mf{R}$-equivalence class as a single element,  for any $\mf{R} $-compatible linear operator $T$, and the above Lemma provides us a linear operator
$ T_{\mf{R}}:\mathbb{C}^{\mf{C}_{\mf{R}}}\to \mathbb{C}^{\mf{C}_{\mf{R}}}$
such that, 
$T_{\mf{R}}(\epsilon_{W_i})=\sum\limits_{j=1}^mc^i_j\epsilon_{W_j}$ for all $i=1,\ldots,m$. Thus, for any $y:\mf{C}_{\mf{R}}\to \mathbb{C}$, 
% $$T_{\mf{R}}(y)=\sum\limits_{i,j=1}^mc^i_j|W_{j}|y(W_i)\epsilon_{W_j}.$$
\begin{equation}\label{R-contract_exp}
    T_{\mf{R}}(y)=\sum\limits_{i,j=1}^mc^i_jy(W_i)\epsilon_{W_j}.
\end{equation}
We refer to $T_{\mf{R}}$ as the \emph{${\mf{R}}$-contraction} of $T$. Thus, with respect to the ordered basis, $\{\epsilon_{W_1},\ldots,\epsilon_{W_n}\} $ the matrix representation of $T_{\mathfrak{R}}$ is 
\[\left[
\begin{smallmatrix}
c^1_1&c^2_1&\cdots&c^n_1\\
\vdots&\vdots&\ddots&\vdots\\
c^1_n&c^2_n&\cdots&c^n_n
\end{smallmatrix}
\right].\]
Unless some basis is mentioned, we only consider this matrix representation as the matrix of $T_{\mathfrak{R}}$.
For $y:\mf{C}_{\mf{R}}\to \mathbb{C}$, we define the \emph{$\mf{R}$-blow up} of $y$ is a function $y_{_\mf{R}}:V(H)\to \mathbb{C}$, and is defined by $ y_{_\mf{R}}(v)=y({W_i})$, for all $v\in W_i$, for all $i=1,\ldots,m$. Thus, $y_{_\mf{R}}=\sum\limits_{i=1}^my(W_i)\chi_{_{W_i}}$.
\begin{exm}\rm\label{ex-contraction-lem}
    Again, we demonstrate the \Cref{lem_char}, using the hypergraph $H$, and the equivalence relation $\mathfrak{R}$ on $V(H)$ we have considered in the \Cref{equivalence-comp-exm}. Since the matrix $A_{(R,H)}=[a_{uv}]_{u,v\in V(H)}$ is $\mathfrak{R}$-compatible, for all $i=1,2$, if distinct $u,v\in V_i$, then $a_{uw}=a_{vw}$ for all $w\in V(H)\setminus\{u,v\}$, and $a_{uv}=|E_u(H)\cap E_v(H)|=a_{vu}$. Therefore, $(A_{(R,H)}\chi_{V_i})(u)=\sum_{w\in V_i}a_{uw}=\sum_{w\in V_i}a_{vw}=(A_{(R,H)}\chi_{V_i})(v)=c^i_j$ for all $u,v\in V_j$ for all $i,j=1,2$. That is, $(A_{(R,H)}\chi_{V_i})(u)=c^i_j $ for all $u\in V_j$, for $i,j=1,2$. Therefore, the matrix representation of $\mathfrak{R}$-contraction of $A_{(R,H)}$ is $\left[\begin{smallmatrix}
        c^1_1&c_1^2\\
        c^1_2&c^2_2
    \end{smallmatrix}\right]=\left[\begin{smallmatrix}
       2&3\\
        3&6
    \end{smallmatrix}\right]$. 
\end{exm}
In the theory of equitable partition in the adjacency matrix associated with a graph, it is well-established that each eigenvalue of the quotient matrix is an eigenvalue of the original matrix (see \cite[Theorem 9.3.3]{agt-godsil-royle}). Now, here we show that given any equivalence relation ${\mf{R}}$ on the vertex set of a hypergraph, a similar fact holds for the $\mathfrak{R}$-contraction $T_{\mf{R}}$ of any ${\mf{R}}$-compatible operator $T$.
\begin{thm}\label{gen-contract-eig}
    Let $H$ be a hypergraph, $\mf{R}$ be an equivalence relation on $V(H)$, and $T$ be a $\mf{R} $-compatible linear operator. If $\lambda$ is an eigenvalue of $T_{\mf{R}}$ with eigenvector $y$, then $\lambda$ is an eigenvalue of $T$ with an eigenvector $y_{_\mf{R}}$.
\end{thm}
\begin{proof}
    For any $v\in V(H)$,
    \begin{align*}
       ( Ty_{_\mf{R}})&=T\left(\sum\limits_{i=1}^my(W_i)\chi_{_{W_i}}\right)=\sum\limits_{i=1}^my(W_i)T(\chi_{_{W_i}})=\sum\limits_{i=1}^my(W_i)\sum\limits_{j=1}^mc^i_j\chi_{_{W_j}}.      
    \end{align*}
    Now, for any $j=1,\ldots,m$, if $v\in W_j$, then $( Ty_{_\mf{R}})(v)=\sum\limits_{i=1}^mc^i_jy(W_i)$.
So from \Cref{R-contract_exp} we have
$(T_{\mf{R}}(y))(W_j)=\sum\limits_{i=1}^my(W_i)c^i_j$. Since for any $v\in V(H)$, there exists a $j\in\{1,\ldots,m\}$ such that $v\in W_j$ we have,  $( Ty_{_\mf{R}})(v)= (T_{\mf{R}}(y))(W_j)=\lambda y(W_i)=\lambda y_{_\mf{R}}(v)$.
% \begin{align*}
 %     ( Ty_{_\mf{R}})(v)&=\sum\limits_{i=1}^my(W_i)c^i_j|{W_j}|
 % \end{align*}
  % \begin{align*}
    %      ( Ty_{_\mf{R}})(v)&=\sum\limits_{i=1}^my(W_i)\sum\limits_{j=1}^mc^i_j|{W_j}|\\
    %      &=\sum\limits_{i=1}^m\left(\sum\limits_{j=1}^mc^i_j|{W_j}|\right)y(W_i)\\
    %       &=\sum\limits_{i=1}^m\left(T_{\mf{R}}(\epsilon_{W_i})(W_j)\right)y(W_i)\\
    %       &=\left(T_{\mf{R}}\left(\sum\limits_{i=1}^my(W_i)\epsilon_{W_i}\right)\right)(W_j)\\
    %       &=T_{\mf{R}}(y)(W_j)=\lambda y(W_j)=\lambda y_{_\mf{R}}(v).
    % \end{align*} 
    Therefore, $Ty_{_\mf{R}}=\lambda y_{_\mf{R}}$, and this completes the proof.
\end{proof}
For exampe, since the eigenvalues of the matrix $\left[\begin{smallmatrix}
        c^1_1&c_1^2\\
        c^1_2&c^2_2
    \end{smallmatrix}\right]=\left[\begin{smallmatrix}
       2&3\\
        3&6
    \end{smallmatrix}\right]$ in the \Cref{ex-contraction-lem} are $4\pm\sqrt{13}$, two eigenvalues of $A_{(R,H)}$ are $4\pm\sqrt{13}$.
%Now, in the following section, we introduce an equivalence relation $\mf{R}_s$ on the vertex set of a hypergraph $H$ such that $\adj, \lap, \q$ are $\mf{R}_s$-compatible. This equivalence relation gives us the idea of a certain symmetric substructure in hypergraphs. We call them units. We also introduce some other symmetric substructures of hypergraphs using units. Besides spectra of $\adj, \lap, \q$, many other properties of $H$ depend on these substructures. We collectively refer to all these substructures as building blocks of a hypergraph. The vertex set and all hyperedges of a hypergraph can be expressed as the union of these building blocks.

\section{Building blocks in hypergraphs}
Now, we study some particular equivalence relations on the vertex set of a hypergraph such that all the hyperedges of the hypergraph can be expressed in terms of the equivalence classes.  These equivalence classes are crucial because they help us to understand how the hypergraph is structured. Since the incidence structure of the hypergraph is related to these equivalence classes, we refer to them as \emph{building blocks} of a hypergraph.  
\subsection{Units in a hypergraph}\label{1.unit}
In a hypergraph, a collection of vertices with the same star has similar incidence relations. This fact motivates us to define the following equivalence relation.
\begin{df}[Unit]\label{df_unit}
\Lh
The relation
$\mf{R}_{s}=\{(u,v)\in V(H)\times V(H):E_u(H)=E_v(H)\}$
is an equivalence relation. We refer to each $\mf{R}_{s}$-equivalence class in a hypergraph as a \emph{unit}. 
\end{df}
Given a hypergraph $H$, we denote the collection of all the units in $H$ as $\mathfrak{U}(H)$.
A unit $W_E$ in $H$ corresponds to an $E\subseteq E(H)$, such that 
$W_E=\{v\in V(H):E_v(H)=E\}$.
 We refer to $E$ as the \emph{generating set of the unit $W_E$.}
 \begin{exm}
\begin{figure}[H]
		\centering
		\begin{tikzpicture}[scale=0.45]
			% \begin{pgfonlayer}{nodelayer}
				\node [style=none] (0) at (-1.75, -0.25) {};
				\node [style=none] (1) at (0, 1) {};
				\node [style=none] (2) at (1, 0) {};
				\node [style=none] (3) at (-0.5, -0.75) {};
				\node [style=none] (4) at (5.25, 0.25) {};
				\node [style=none] (5) at (7, 2) {};
				\node [style=none] (6) at (8.5, -0.25) {};
				\node [style=none] (7) at (7, -1) {};
				\node [style=none] (8) at (2.75, 5.25) {};
				\node [style=none] (9) at (3.75, 6.5) {};
				\node [style=none] (10) at (6.5, 5.25) {};
				\node [style=none] (11) at (5.25, 4.5) {};
				\node [style=none] (12) at (2.75, -5) {};
				\node [style=none] (13) at (5, -4) {};
				\node [style=none] (14) at (7, -5.75) {};
				\node [style=none] (15) at (5, -7) {};
				\node [style=none] (16) at (-6.5, 4.25) {};
				\node [style=none] (17) at (-5.5, 5.25) {};
				\node [style=none] (18) at (-2, 3) {};
				\node [style=none] (19) at (-5.5, 2.5) {};
				\node [style=none] (20) at (-7.25, -4.5) {};
				\node [style=none] (21) at (-3.25, -3.25) {};
				\node [style=none] (22) at (-4, -5.25) {};
				\node [style=none] (23) at (-7, -6.5) {};
				\node [style=none] (24) at (2, 0.5) {};
				\node [style=none] (25) at (1.5, 2.25) {};
				\node [style=none] (26) at (3, -0.5) {};
				\node [style=none] (27) at (1.25, -1.75) {};
				\node [style=none] (28) at (-2.25, -0.5) {};
				\node [style=none] (29) at (4.25, 7) {};
				\node [style=none] (30) at (7.25, 4.5) {};
				\node [style=none] (31) at (3.25, 0.75) {};
				\node [style=none] (32) at (1.25, -2.75) {};
				\node [style=none] (33) at (8.25, 2.25) {};
				\node [style=none] (34) at (7.5, -1.5) {};
				\node [style=none] (35) at (4.5, -1.25) {};
				\node [style=none] (36) at (5.75, 3) {};
				\node [style=none] (37) at (5.25, -2.5) {};
				\node [style=none] (38) at (8, -5.5) {};
				\node [style=none] (39) at (5, -7.75) {};
				\node [style=none] (40) at (2.25, 2.25) {};
				\node [style=none] (41) at (-4.25, -1) {};
				\node [style=none] (42) at (1.5, 0.75) {};
				\node [style=none] (43) at (-2, -5) {};
				\node [style=none] (44) at (-7.75, -3.75) {};
				\node [style=none] (45) at (-7.25, -7.25) {};
				\node [style=none] (46) at (-2, -5) {};
				\node [style=none] (47) at (-3, -0.25) {};
				\node [style=none] (48) at (1.75, 0.25) {};
				\node [style=none] (49) at (-6.5, 2.5) {};
				\node [style=none] (50) at (-6, 6) {};
				\node [style=none] (51) at (-0.75, 3.75) {};
				\node [style=none] (52) at (1.25, 1.5) {};
				
				%%%%%%%
				\node [style=none] (79) at (-6.5, 0) {};
				\node [style=none] (80) at (-9, 1) {};
				\node [style=none] (81) at (-11, 0.25) {};
				\node [style=none] (82) at (-8.5, -0.75) {};
				\node [style=none] (83) at (-4.75, -1) {};
				\node [style=none] (84) at (-12.25, 0.75) {};
				\node [style=none] (85) at (-2.25, -3.5) {};
				\node [style=none] (86) at (-4.75, -7.5) {};
				\node [style=none] (87) at (-11.25, -2.75) {};
				\node [style=none] (88) at (-6, -0.25) {};
				\node [style=none] (89) at (-11.5, 0.25) {};
				\node [style=none] (90) at (-2, 2.5) {};
				\node [style=none] (91) at (-3, 6) {};
				\node [style=none] (92) at (-8.75, 3.75) {};
				\node [style=none] (93) at (-10.25, 2.5) {};
				
				%\end{pgfonlayer}
				%\begin{pgfonlayer}{edgelayer}
				\draw [style=new edge style 1, bend left=315,fill=gray!50!white] (1.center) to (0.center)
				to(0.center) to (3.center)
				to(3.center) to (2.center);
				to (2.center) to (1.center);
				
				\draw [style=new edge style 1, bend left=315,fill=gray!50!white] (5.center) to (4.center)to
				(4.center) to (7.center)to
				(7.center) to (6.center)to (6.center) to (5.center);
				
				\draw [style=new edge style 1, bend left=315,fill=gray!50!white] (9.center) to (8.center)to
				(8.center) to (11.center)
				to (11.center) to (10.center)
				to(10.center) to (9.center);
				
				\draw [style=new edge style 1, bend left=315,fill=gray!50!white] (13.center) to (12.center)
				to (12.center) to (15.center)
				to(15.center) to (14.center)
				to (14.center) to (13.center);
				
				\draw [style=new edge style 1, bend left=315,fill=gray!50!white] (17.center) to (16.center)
				to(16.center) to (19.center)
				to(19.center) to (18.center)
				to(18.center) to (17.center);
				
				\draw [style=new edge style 1, bend left=315,fill=gray!50!white] (21.center) to (20.center)
				to(20.center) to (23.center)
				to(23.center) to (22.center)
				to(22.center) to (21.center);
				
				\draw [style=new edge style 1, bend left=315 ,fill=gray!50!white] (25.center) to (24.center)
				to(24.center) to (27.center)
				to(27.center) to (26.center)
				to(26.center) to (25.center);
				\draw [bend left=60, looseness=0.50] (28.center) to (29.center);
				\draw [bend left=75, looseness=0.75] (29.center) to (30.center);
				\draw [bend right=45, looseness=0.50] (30.center) to (31.center);
				\draw [bend left=45, looseness=0.75] (31.center) to (32.center);
				\draw [bend left=75, looseness=0.50] (32.center) to (28.center);
				\draw [bend left=75] (33.center) to (34.center);
				\draw [bend left, looseness=0.50] (34.center) to (35.center);
				\draw [bend left=60, looseness=0.75] (32.center) to (35.center);
				\draw [bend left=45] (28.center) to (36.center);
				\draw [bend left=15, looseness=0.50] (36.center) to (33.center);
				\draw [in=90, out=-60, looseness=1.50] (37.center) to (38.center);
				\draw [bend left] (38.center) to (39.center);
				\draw [bend left, looseness=0.50] (39.center) to (28.center);
				\draw [bend left=60] (28.center) to (40.center);
				\draw [bend left=15, looseness=0.50] (40.center) to (37.center);
				\draw [bend left=45] (41.center) to (42.center);
				\draw [bend left, looseness=0.75] (42.center) to (43.center);
				\draw [bend right=75, looseness=0.75] (44.center) to (45.center);
				\draw [bend right=15, looseness=0.75] (45.center) to (46.center);
				\draw [bend right=15, looseness=0.50] (44.center) to (41.center);
				\draw [bend right=45] (47.center) to (48.center);
				\draw [bend left=75, looseness=0.75] (49.center) to (50.center);
				\draw [bend left=15, looseness=0.75] (50.center) to (51.center);
				\draw [bend left=15, looseness=0.50] (49.center) to (47.center);
				\draw [bend left=15, looseness=0.75] (51.center) to (52.center);
				\draw [bend left=15] (52.center) to (48.center);
				%%%%%%%%%%%%
				\draw [style=new edge style 1, bend right=315,fill=gray!50!white] (80.center) to (79.center)
				to (79.center) to (82.center)
				to (82.center) to (81.center)
				to (81.center) to (80.center);
				
				\draw [bend right=45] (83.center) to (84.center);
				\draw [bend left=75, looseness=0.75] (85.center) to (86.center);
				\draw [bend left=15, looseness=0.75] (86.center) to (87.center);
				\draw [bend left=15, looseness=0.50] (85.center) to (83.center);
				\draw [bend left=45] (88.center) to (89.center);
				\draw [bend right=75, looseness=0.75] (90.center) to (91.center);
				\draw [bend right, looseness=0.75] (91.center) to (92.center);
				\draw [bend right=15, looseness=0.50] (90.center) to (88.center);
				\draw [bend right=15, looseness=0.75] (92.center) to (93.center);
				\draw [bend right=15] (93.center) to (89.center);
				\draw [bend right=15, looseness=0.75] (84.center) to (87.center);
				%\end{pgfonlayer}
				%%%%%%%%%%%%%%%%%%%%%%%%%%%%%%%%%%%%%%%%%%%%%%%%%%%%%%%%%%%%%%%%%%%%%%%
				\node [style=new style 0] (53) at (-1.25, 0) {\tiny 1};
				\node [style=new style 0] (54) at (0.5, 0.25) {\tiny 2};
				\node [style=new style 0] (55) at (2.25, 1) {\tiny 4};
				\node [style=new style 0] (56) at (2, -0.75) {\tiny 3};
				\node [style=new style 0] (57) at (3.25, 5.5) {\tiny 10};
				\node [style=new style 0] (58) at (5.5, 6) {\tiny 9};
				\node [style=new style 0] (59) at (6.25, 1.25) {\tiny 8};
				\node [style=new style 0] (60) at (7.75, -0.5) {\tiny 7};
				\node [style=new style 0] (61) at (6.25, -6.25) {\tiny 6};
				\node [style=new style 0] (61) at (5.5, -5) {\tiny 15};
				\node [style=new style 0] (62) at (3.25, -5) {\tiny 5};
				\node [style=new style 0] (63) at (-4, -4.25) {\tiny 13};
				\node [style=new style 0] (64) at (-7, -5.75) {\tiny14};
				\node [style=new style 0] (65) at (-6, 3.25) {\tiny 12};
				\node [style=new style 0] (66) at (-3, 3.5) {\tiny11};
				\node [style=none] (67) at (-0.25, -0.5) {\tiny$W_{E_1}$};
				\node [style=none] (68) at (2.5, 0) {\tiny$W_{E_2}$};
				\node [style=none] (69) at (4.75, -5.75) {\tiny$W_{E_3}$};
				\node [style=none] (70) at (6.5, 0.1) {\tiny$W_{E_4}$};
				\node [style=none] (71) at (4.75, 5.25) {\tiny$W_{E_5}$};
				\node [style=none] (72) at (-4.5, 3) {\tiny$W_{E_6}$};
				\node [style=none] (73) at (-5.75, -5) {\tiny$W_{E_7}$};
				\node [style=none] (74) at (6, 7.25) {$e_1$};
				\node [style=none] (75) at (9.5, 0.25) {$e_2$};
				\node [style=none] (76) at (8, -6.75) {$e_3$};
				\node [style=none] (77) at (-0.75, -4.75) {$e_5$};
				\node [style=none] (77) at (-0.75, -7.75) {$H$};
				\node [style=none] (78) at (-0.8, 4.3) {$e_4$};
				\node [style=new style 0] (94) at (-4.5, 4.5) {\tiny 16};
				\node [style=new style 0] (95) at (-10.25, 0.5) {\tiny 17};
				\node [style=none] (96) at (-9.25, -0.25) {\tiny $W_{E_8}$};
				\node [style=new style 0] (97) at (-7.75, -0.25) {\tiny 18};
				\node [style=none] (98) at (-8, 5) {$e_6$};
				\node [style=none] (99) at (-10.75, -4) {$e_7$};
			\end{tikzpicture}
			 \caption{%Units of hypergraph $H$ with the generating sets $E_1=\{e_5,e_6\}$, $E_2=\{e_1,e_2,e_3\}$, $E_3=\{e_3\}$, $E_4=\{e_2\}$, $E_5=\{e_1\}$, $E_6=\{e_4,e_6\}$, $E_7=\{e_5,e_7\}$ and $E_8=\{e_6,e_7\}$.
    A hypergraph $H$ with $V(H)=\{n\in \n:n\le 18\}$ and 
	    $E(H)=\{e_1,e_2,e_3,e_4,e_5,e_6,e_7\}$, where  $e_1=\{1,2,3,4,9,10\}$, $e_2=\{1,2,3,4,7,8\}$, $e_3=\{1,2,3,4,5,6,15\}$, $e_4=\{1,2,11,12,16\}$, $e_5=\{1,2,13,14\}$, $e_6=\{11,12,16,17,18\}$, and $e_7=\{13,14,17,18\} $ (see \Cref{fig:unit}).
	    The generating sets are $E_1=\{e_1,e_2,e_3,e_4,e_5\}$, $E_2=\{e_1,e_2,e_3\}$, $E_3=\{e_3\}$, $E_4=\{e_2\}$, $E_5=\{e_1\}$, $E_6=\{e_4,e_6\}$, $E_7=\{e_5,e_7\}$ and $E_8=\{e_6,e_7\}$.
	    The units of $H $ are $W_{E_1}=\{1,2\}$, $W_{E_2}=\{3,4\}$, $W_{E_3}=\{5,6,15\}$, $W_{E_4}=\{7,8\}$, $W_{E_5}=\{9,10\}$, $W_{E_6}=\{11,12,16\}$, $W_{E_7}=\{13,14\}$, and $W_{E_8}=\{17,18\}$.
   }
	    \label{fig:unit}
		\end{figure}
	
Consider the hypergraph $H$ in \Cref{fig:unit}. Here $$\mf{U}(H)=\{W_{E_1},W_{E_2}, W_{E_3}, W_{E_4},W_{E_5},W_{E_6},W_{E_7},W_{E_8}\}.$$
If we consider $W_{E_1}=\{1,2\}$, then $E_1(H)=E_2(H)=E_1$ thus, $\{1,2\}\in \mf{R}_s$. Similarly, for any $u,v\in W_{E_6} $, we have $\{u,v\}\in\mf{R}_s$, and $E_u(H)=E_v(H)=E_6$. 
\end{exm}

We refer to a pair of distinct units $W_E$, and $W_F$ as \emph{neighbour units} if there exists $e\in E(H)$ such that $W_E\cup W_F\subseteq e$. For example, in the hypergraph given in \Cref{fig:unit}, $W_{E_6}, W_{E_8}$ are unit neighbours because the hyperedge $e_6$ contains their union, and  $W_{E_6}, W_{E_7}$ are not unit-neighbours because there exists no hyperedge in $H$ to contain $W_{E_6}\cup W_{E_7}$.
The following Proposition show some properties of units.
\begin{prop}\label{edge-unit}
	
    Let $H$ be a hypergraph, and $\mathfrak{U}(H)$ is the collection of all the units in $H$.

    1. For $W_E,W_F\in\mathfrak{U}(H)$, either $W_E=W_F$ or $W_E\cap W_F=\emptyset$, and $V(H)=\bigcup\limits_{W_E\in\mathfrak{U}(H)}W_E$ is a partition of the vertex set $V(H)$.

    2. A hyperedge $e(\in E(H))$ is either a unit or can be expressed as the disjoint union of units.
\end{prop}
\begin{proof}
    Since $W_E,W_F\in\mathfrak{U}(H)$ are two $\mathfrak{R}_s$ equivalence class, either $W_E=W_F$ or $W_E\cap W_F=\varnothing$. Since $\mathfrak{U}(H)$ is the collection of all the equivalence classes of the equivalence relation $\mathfrak{R}_s$ on $V(H)$, we have $V(H)=\bigcup\limits_{W_E\in\mathfrak{U}(H)}W_E$.

    Suppose $e=\{v_1,v_2\ldots,v_k\}\in E(H)$. Now there exists $W_{E_{v_i}(H)}\in \mathfrak{U}(H)$ for $i=1,2,\ldots,k$. Since either $W_{E_{v_i}(H)}=W_{E_{v_j}(H)}$ or $W_{E_{v_i}(H)}\cap W_{E_{v_j}(H)}=\emptyset$ for all $i,j=1,2,\ldots,k$, we just need to show if any unit $W_E$ is such that $W_E\cap e \ne\emptyset$, then $W_E\subseteq e$. Since $W_E\cap e\ne\emptyset$, there exists $u\in W_E\cap e$. Thus $E_u(H)=E$, and $e\in E$. If possible, let $W_E\setminus e\ne\emptyset$, and $v\in W_E\setminus e $. In that case $e\notin E_v(H)=E$, a contradiction. Therefore, $W_E\setminus e=\emptyset$ and this completes the proof.
\end{proof}
We denote the number of units in a hyperedge $e\in E(H)$ as $n_e$. For instance, in the hypergraph $H$ illustrated in the \Cref{fig:unit}, the hyperedges $e_1=W_{E_1}\cup W_{E_2}\cup W_{E_5}$, $e_2=W_{E_1}\cup W_{E_2}\cup W_{E_4}$, $e_3=W_{E_1}\cup W_{E_2}\cup W_{E_3} $, $e_4=W_{E_1}\cup W_{E_6}$, $e_5=W_{E_1}\cup W_{E_7}$, $e_6=W_{E_6}\cup W_{E_8}$, $e_7=W_{E_7}\cup W_{E_8}$. That is, $n_{e_1}=n_{e_2}=n_{e_3}=3$, and $ n_{e_4}=n_{e_5}=n_{e_6}=n_{e_7}=2$.

In any hypergraph $H$, for any hypergraph $e\in E(H)$, the \emph{unit-contraction of the hyperedge} $e$ is defined as $ \hat{e}=\{W_{E_v(H)}:v\in e\}$. Here, $W_{E_v(H)}$ is the unit containing the vertex $v$. For instance, in the hypergraph $H$ illustrated in the \Cref{fig:unit}, $ {\hat e}_{1}=\{W_{E_1},W_{E_2},W_{E_5}\}$. 
\begin{df}[Unit contraction]
    \Lh
    The \emph{unit-contraction} of $H$ is a hypergraph $\Bar H$ such that $V(\Bar H)=\mf{U}(H)$, and $E(\Bar H)=\{\hat{e}:e\in E(H)\}$.
\end{df}
 Since there exists a bijection $\pi: E(H)\to E(\Bar{H})$ defined by $e\mapsto \hat{e}$ for all $e\in E(H)$, two hypergraphs $H$ and $\Bar{H}$ have the same number of hyperedges.
 
 In the \Cref{sec-equi-rel}, we have described that given any equivalence relation $\mathfrak{R}$ on the vertex set $V(H)$, any complex-valued function $y: \mathfrak{C}_{\mathfrak{R}}\to\mathbb{C}$ corresponds to the $\mathfrak{R}$-blow-up $y_{_\mathfrak{R}}:V(H)\to\mathbb{C}$ such that $y_{_\mathfrak{R}}(v)=y(W)$, where $W$ is the ${\mathfrak{R}}$-equivalence class containing $v$. For $\mathfrak{R}=\mathfrak{R}_s$, the general signless Laplacian, general adjacency, and general Laplacian operators are $\mathfrak{R}_s$-compatible. Consequently, according to the \Cref{gen-contract-eig}, if $y:\mathfrak{U}(H)\to\mathbb{C}$ is an eigenvector corresponding to an eigenvalue $\lambda$ of the $\mathfrak{R}_s$-contraction of any of these operators, then  $\lambda$ is also an eigenvalue of the operator itself with eigenvector $ y_{_{\mathfrak{R}_s}}:V(H)\to\mathbb{C}$. We are going to use this fact to prove the next few results.
\subsection*{Units and general signless Laplacian operator.}
Now, we explore the spectra of $\q$ using $\mf{R}_s$. Our results describe the information of units encoded in the spectra of $\q$, and conversely, each unit has its contribution in the spectra of $\q$. We recall here that $(Q_{(H,\delta_{V(H)},\delta_{E(H)})}x)(v)=\sum\limits_{e\in E_v(H)}\frac{\delta_{E(H)}(e)}{\delta_{V(H)}(v)}\frac{1}{|e|^2}\sum\limits_{u\in e}x(u)=\sum\limits_{u\in V(H)}\sum\limits_{e\in E_u(H)\cap E_v(H)}\frac{\delta_{E(H)}(e)}{\delta_{V(H)}(v)}\frac{1}{|e|^2}x(u)$ for any vector $x:V(H)\to\mathbb{C}$.
Suppose that $T=\q$, we define $\hat{Q}_{H}:=T_{\mf{R}_s}$. In the proof of the next result, we show that if any $W_E\in\mf{U}(H)$ corresponds to a constant $c_{_E}$ such that  $\delta_{V(H)}(v)=c_{_E}$, for all $v\in W_E$, then $\q$ is $\mathfrak{R}_s$-compatible, and thus, \Cref{R-contract_exp} implies that, for all $y:\mf{U}(H)\to \mathbb{R}$,
$$ \hat{Q}_Hy=\sum\limits_{i,j=1}^mc^i_jy(W_{E_i})\epsilon_{W_{E_j}}.$$
Here, for any $v\in W_{E_j}$, the constant is 

$c^i_j=(\q\chi_{W_{E_i}})(v)=\sum\limits_{u\in V(H)}\sum\limits_{e\in E_u(H)\cap E_v(H)}\frac{\delta_{E(H)}(e)}{\delta_{V(H)}(v)}\frac{1}{|e|^2}\chi_{_{W_{_{E_i}}}}(u)=\sum\limits_{e\in E_i\cap E_j}\frac{\delta_{E(H)}(e)}{c_{_{E_j}}}\frac{1}{|e|^2}|W_{E_i}|$. Note that $\hat{Q}_H:\mathbb{C}^{V(\Bar{H})}\to \mathbb{C}^{V(\Bar{H})}$ is an operator associated with the unit contraction hypergraph $\Bar{H}$. 
% Since for any unit $W_E$ in $H$, we have $(\q x_{uv})(v)=\sum\limits_{e\in E_v(H)}\frac{\delta_{E(H)}}{\delta_{V(H)}}\frac{1}{|e|^2}\sum\limits_{w^\prime\in e}x_{uv}(w^\prime)=0$, for all $u,v\in W_E$. Thus, \Cref{equivalence-eig}, and \Cref{unit-compatible_q} lead us to the following result.
\begin{thm}[Complete spectrum of $\q$ in terms of units]\label{cor_unit_eig}
\Lh
 If $\delta_{V(H)}(v)=c_{_E}$ for all $v\in W_E$, for any $W_E\in\mf{U}(H)$, then the complete spectrum of $\q$ is given below. 
\begin{enumerate}[leftmargin=*]
    \item  For each unit $W_E\in \mf{U}(H)$, $0$ is an eigenvalue of $\q$ with multiplicity $ |W_E|-1$, and $S_{W_E}$ is a subspace of eigenspace of the eigenvalue $0$.
    \item For each eigenvalue $\lambda$ of $\hat{Q}_H$ with eigenvector $y:\mf{U}(H)\to\mathbb{C}$, $\lambda$ is also an eigenvalue of $\q$ with the $\mf{R}_s$-blow up $y_{_{\mf{R}_s}}$ as an eigenvector.
\end{enumerate}
\end{thm}
\begin{proof}The matrix representation of $\q=[q_{uv}]_{u,v\in V(H)}$ is such that for all $u,v\in V(H)$, the entry $q_{uv}=\sum\limits_{e\in E_u(H)\cap E_v(H)}\frac{\delta_{E(H)}(e)}{\delta_{V(H)}(u)}\frac{1}{|e|^2}$. For $u,v\in W_E$, the star $E_u(H)=E=E_v(H)$.
Since  $\delta_{V(H)}(v)=c_{_E}$ for all $v\in W_E$, and , for any $u,v\in W_E$, $q_{uw}=\sum\limits_{e\in E\cap E_w(H)}\frac{\delta_{E(H)}(e)}{c_E}\frac{1}{|e|^2}=q_{vw}$, and $q_{wu}=\sum\limits_{e\in E_w(H)\cap E}\frac{\delta_{E(H)}(e)}{\delta_{V(H)}(w)}\frac{1}{|e|^2}=q_{wv}$, $q_{uu}=q_{vv}=q_{uv}=q_{vu}=\sum\limits_{e\in E}\frac{\delta_{E(H)}(e)}{c_E}\frac{1}{|e|^2}$. Therefore, $\mf{R}_s$ is finer than $\mf{R}_{\q}$. Thus, by the \Cref{equivalence-exm-prop} the operator $\q$ is $\mf{R}_s$-compatible.
    
  Since for any unit $W_E$ in $H$, we have $(\q x_{uv})(v)=\sum\limits_{e\in E_v(H)}\frac{\delta_{E(H)}(e)}{\delta_{V(H)(v)}}\frac{1}{|e|^2}\sum\limits_{w^\prime\in e}x_{uv}(w^\prime)=0$, for all $u,v\in W_E$. Thus, by \Cref{equivalence-eig}, part (1) of this result follows.

    The second part of the theorem follows from \Cref{gen-contract-eig}. The total number of eigenvalues provided by part (1) is $\sum\limits_{W_E\in \mf{U}(H)}(|W_E|-1)=|V(H)|-|\mf{U}(H)|$. Part (2) gives $|\mf{U}(H)|$ numbers of eigenvalues. Since all the eigenvectors provided in parts (1) and (2) are linearly independent, the list gives the complete spectrum of $\q$.
\end{proof}
The first part of \Cref{cor_unit_eig} is established using \Cref{equivalence-exm-prop} and \Cref{equivalence-eig}. However, an alternative proof can be derived by considering the matrix representation of $\q$. For each unit, $W_E$ with $\delta_{V(H)}(v)=c_{_E}$ for all $v\in W_E$, the matrix has $|W_E|$ numbers of identical rows, which implies that $0$ is an eigenvalue with a multiplicity of $|W_E| - 1$. But the use of \Cref{equivalence-eig} provides an idea of the eigenspace of the $0$ eigenvalue. This theorem ensures that the number of units with cardinality at least two in $H$ can be, at most, the multiplicity of the eigenvalue $0 $ of $Q_H$.
\begin{figure}[ht]
    \centering
    \begin{tikzpicture}[scale=0.8]
	%\begin{pgfonlayer}{nodelayer}
		\node [style=none] (0) at (-1, 0.25) {};
		\node [style=none] (1) at (-0.25, 2.25) {};
		\node [style=none] (2) at (1.25, 0) {};
		\node [style=none] (3) at (2.75, 3.25) {};
		\node [style=none] (4) at (-1, 0.25) {};
		\node [style=none] (5) at (0.5, 1.75) {};
		\node [style=none] (6) at (1.25, -0.75) {};
		\node [style=none] (7) at (3.75, 0.25) {};
		\node [style=none] (8) at (1.25, 1) {};
		\node [style=none] (9) at (0.25, -0.75) {};
		\node [style=none] (10) at (-1, 1.25) {};
		\node [style=none] (11) at (-2.75, -1.5) {};
		\node [style=new style 0,scale=0.5] (12) at (0, 0.75) {1};
		\node [style=new style 0,scale=0.5] (13) at (1.25, 2.5) {2};
		\node [style=new style 0,scale=0.5] (14) at (2.25, 2.25) {3};
		\node [style=new style 0,scale=0.5] (15) at (2.75, 0.5) {4};
		\node [style=new style 0,scale=0.5] (16) at (2.25, 0) {5};
		\node [style=new style 0,scale=0.5] (17) at (-1.75, -0.5) {6};
		\node [style=none] (18) at (-2.5, -1) {};
		\node [style=none] (19) at (-1.25, -0.25) {};
		\node [style=none] (20) at (2, -0.5) {};
		\node [style=none] (21) at (3, 1) {};
		\node [style=none] (22) at (-0.5, 0.25) {};
		\node [style=none] (23) at (0.5, 1.25) {};
		\node [style=none] (24) at (2.5, 2) {};
		\node [style=none] (25) at (1, 2.75) {};
		\node [style=none] (26) at (3.25, 3.5) {$e_1$};
		\node [style=none] (27) at (4.25, 0.25) {$e_2$};
		\node [style=none] (28) at (-2.8, 0) {$e_3$};
		\node [style=none] (29) at (0.5, -1.25) {H};
	% \end{pgfonlayer}
	% \begin{pgfonlayer}{edgelayer}
		\draw (2.center)
			 to [bend right=330, looseness=1.25] (0.center)
			 to [bend left] (1.center)
			 to [bend left, looseness=0.50] (3.center)
			 to [bend left, looseness=0.75] cycle;
		\draw (6.center)
			 to [bend right=330, looseness=1.25] (4.center)
			 to [bend left] (5.center)
			 to [bend left, looseness=0.75] (7.center)
			 to [bend left=15, looseness=0.75] cycle;
		\draw (10.center)
			 to [bend right=330, looseness=1.25] (8.center)
			 to [bend left] (9.center)
			 to [bend left, looseness=0.50] (11.center)
			 to [bend left, looseness=0.75] cycle;
		\draw [style=new edge style 1] (19.center)
			 to [bend right=60, looseness=1.25] (18.center)
			 to [bend right=60, looseness=1.25] cycle;
		\draw [style=new edge style 1] (21.center)
			 to [bend right=60, looseness=1.25] (20.center)
			 to [bend right=60, looseness=1.25] cycle;
		\draw [style=new edge style 1] (23.center)
			 to [bend right=60, looseness=1.25] (22.center)
			 to [bend right=60, looseness=1.25] cycle;
		\draw [style=new edge style 1] (25.center)
			 to [bend left=60, looseness=1.75] (24.center)
			 to [bend left=75, looseness=1.50] cycle;
    \node [style=none,scale=0.5] (30) at (0.25, 0) {$W_{E_2}$};
		\node [style=none,scale=0.5] (31) at (-1.75, -1.25) {$W_{E_1}$};
		\node [style=none,scale=0.5] (32) at (2.25, 0.5) {$W_{E_3}$};
		\node [style=none,scale=0.5] (33) at (1.5, 2) {$W_{E_4}$};
	%\end{pgfonlayer}
\end{tikzpicture}
    \caption{Hypergraph $H$ with $V(H)=\{1,2,3,4,5,6\}$, and $E(H)=\{e_1,e_2,e_3\}$. The units in $H$ are $W_{E_1}=\{6\}$, $W_{E_2}=\{1\}$, $W_{E_3}=\{4,5\}$, and $W_{E_4}=\{2,3\}$. The hyperedges are drawn with solid lines, whereas the units are illustrated with the dashed line.  }
    \label{fig:enter-label}
\end{figure}
For example, consider the matrix $Q_{(R,H)}=[q_{uv}]_{u,v\in V(H)}$ with $q_{uv}=|E_u(H)\cap E_v(H)|$ for $u,v\in V(H)$.  Suppose further that the underlying hypergraph is the hypergraph $H$ illustrated in the \Cref{fig:enter-label}. Recall the \Cref{cases}(1), the matrix $Q_{(R,H)}=\left[\begin{smallmatrix}
    3 & 1 & 1 & 1 & 1 & 1\\ 1 & 1 & 1 & 0 & 0 & 0\\ 1 & 1 & 1 & 0 & 0 & 0\\ 1 & 0 & 0 & 1 & 1 & 0\\ 1 & 0 & 0 & 1 & 1 & 0\\ 1 & 0 & 0 & 0 & 0 & 1 
\end{smallmatrix}\right]$ is a special case of $\q$ with $\delta_{V(H)}$ is constant in each unit. Two units $W_{E_3}$, and $W_{E_4}$ corresponds to two $0$ eigenvalues of $Q_{(R,H)}$ with eigenvectors $x_{23}$, and $x_{45}$. Here the matrix ${\hat Q}_{(R,H)}=\left[\begin{smallmatrix}
    3 & 2 & 2 & 1\\ 1 & 2 & 0 & 0\\ 1 & 0 & 2 & 0\\ 1 & 0 & 0 & 1
\end{smallmatrix}\right] $, which have eigenvalues $0,2,3+\sqrt{3},3-\sqrt{3}$. Thus, $Q_{(R,H)} $ also have these eigenvalues.

The following Theorem shows that the converse of the part (1) of the \Cref{cor_unit_eig} is true.
 \begin{thm}\label{converse-q-unit}
    Let $H $ be a hypergraph and $U(\subset V(H))$ with $|U|>1$. If $U$ is a maximal subset such that $S_U$ is a subspace of the eigenspace of the eigenvalue $0$ of $Q_H$, then $U$ is a unit.
    \end{thm}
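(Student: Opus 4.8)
The plan is to show that the eigenvalue hypothesis on $U$ forces every pair of vertices of $U$ to have the same star, so that $U$ is contained in a single unit, and then to close the argument by invoking maximality. Throughout I write $\sigma_H(e)=\delta_{E(H)}(e)/|e|^2$, which is \emph{strictly positive} for every $e\in E(H)$ because $\delta_{E(H)}$ takes values in $(0,\infty)$; this positivity is the crucial arithmetic input.

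First I would extract the pointwise consequence of $S_U$ lying in the $0$-eigenspace of $\q$. Since $|U|>1$, pick any two distinct $u,v\in U$. The Faria vector $x_{uv}$ has $supp(x_{uv})=\{u,v\}\subseteq U$ and $x_{uv}(u)+x_{uv}(v)=0$, so $x_{uv}\in S_U$ and hence $\q x_{uv}=0$. Writing $\chi_e$ for the characteristic function of the hyperedge $e$ and noting $\sum_{u'\in e}x_{uv}(u')=\chi_e(v)-\chi_e(u)$, the defining formula for $\q$ gives, for every $w\in V(H)$,
\[
(\q x_{uv})(w)=\frac{1}{\delta_{V(H)}(w)}\sum_{e\in E_w(H)}\sigma_H(e)\big(\chi_e(v)-\chi_e(u)\big).
\]
As $\delta_{V(H)}(w)>0$, the identity $\q x_{uv}=0$ is equivalent to $\sum_{e\in E_w(H)}\sigma_H(e)\big(\chi_e(v)-\chi_e(u)\big)=0$ for every $w$.

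The key step is to test this at $w=u$ and at $w=v$. For $w=u$ every $e\in E_u(H)$ satisfies $\chi_e(u)=1$, so the sum collapses to $-\sum_{e\in E_u(H)\setminus E_v(H)}\sigma_H(e)$; for $w=v$ it collapses to $\sum_{e\in E_v(H)\setminus E_u(H)}\sigma_H(e)$. Since each $\sigma_H(e)$ is strictly positive, these can vanish only if $E_u(H)\setminus E_v(H)=\emptyset$ and $E_v(H)\setminus E_u(H)=\emptyset$, that is, $E_u(H)=E_v(H)$. Hence all vertices of $U$ share a common star $E:=E_u(H)$, so $U\subseteq W_E$ for the unit $W_E\in\mf{U}(H)$ generated by $E$.

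Finally I would invoke maximality. For the unit $W_E$, any two $u',v'\in W_E$ have equal stars, which forces $\chi_e(u')=\chi_e(v')$ for every $e\in E(H)$; the displayed formula then yields $\q x_{u'v'}=0$, and since such Faria vectors span $S_{W_E}$ we get $S_{W_E}\subseteq\ker\q$ (alternatively this is \Cref{cor_unit_eig}(1) via \Cref{equivalence-eig}). Because $U\subseteq W_E$ and $U$ is maximal among subsets whose associated $S$-space lies in the $0$-eigenspace of $\q$, we must have $U=W_E$, a unit. I expect the only genuine subtlety to be the bookkeeping in the two evaluations at $w=u$ and $w=v$ together with the explicit use of the strict positivity of $\sigma_H$; the remainder is a direct application of the definitions and of the maximality hypothesis.
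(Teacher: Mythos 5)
Your proof is correct and follows essentially the same route as the paper: test the Faria vector $x_{uv}$ at the points $u$ and $v$ of its support, use the strict positivity of $\sigma_H$ to force $E_u(H)=E_v(H)$ for all $u,v\in U$, and conclude via maximality that $U$ equals the unit $W_E$. The only difference is that you spell out the closing step --- that $S_{W_E}$ itself lies in the $0$-eigenspace of $\q$, so maximality yields $U=W_E$ --- which the paper's proof uses implicitly.
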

    \begin{proof}
          If $u,v\in U$ then $x_{uv}=\chi_{\{v\}}-\chi_{\{u\}}\in S_U$. If a hyperedge $e$ is such that $u,v\in e $, then $\sum\limits_{w\in e}x_{uv}(w)=0$.
          Thus $0=(Q_Hx_{uv})(u)=\sum\limits_{e\in E_{u}(H)\setminus E_{v}(H)}\frac{\delta_{E(H)}(e)}{\delta_{V(H)}(u)}\frac{1}{|e|^2}$. Consequently, the set $ E_{u}(H)\setminus E_{v}(H)=\emptyset$. Therefore, $E_u(H)\subseteq E_v(H)$ and similarly, we can show $E_v(H)\subseteq E_u(H)$. Therefore, $E_u(H)=E_v(H)$ for all $u,v\in U$. Since $U$ is maximal subset with $E_u(H)=E_v(H)$ for all $u,v\in U$, thus $U$ is a unit.
    \end{proof}
   For instance, the subsets $\{2,3\}$, and $\{4,5\}$ are such that $U_i$ is maximal subset of vertices with $S_{U_i}$ is a subspace of the eigenspace of the eigenvalue $0$ for $i=1,2$. As suggested by the \Cref{converse-q-unit}, both $\{2,3\}$, and $\{4,5\}$ are units, and $\{2,3\}=W_{E_4}$, $\{4,5\}=W_{E_3}$, where $E_3=\{e_2\}$, $E_4=\{e_1\}$.
   \subsection*{Units and general Laplacian operator.} Recall that
   
   $ (L_{(H,\delta_{V(H)},\delta_{E(H)})}x)(v)=\sum\limits_{e\in E_v(H)}\frac{\delta_{E(H)}(e)}{\delta_{V(H)}(v)}\frac{1}{|e|^2}\sum\limits_{u\in e}(x(v)-x(u))$ for all vector $x:V(H)\to\mathbb{C}$.
We define $\hat{L}_H:=T_{\mf{R}_s}$, where $T=\lap$. If $\delta_{V(H)}(v)=c_{_E}$ for all $v\in W_E$, for any $W_E\in\mf{U}(H)$, then   we can show that $\lap$ is $\mathfrak{R}_s$-compatible. Consequently, \Cref{R-contract_exp} implies that, 
for all $y:\mf{U}(H)\to\mathbb{C}$,
$$ \hat{L}_Hy=\sum\limits_{i,j=1}^mc^i_jy(W_{E_i})\epsilon_{W_{E_j}},$$
where $c^i_j=(\lap\chi_{W_{E_i}})(v)=\sum\limits_{e\in E_j}\frac{\delta_{E(H)}(e)}{c_{_{E_j}}}\frac{1}{|e|^2}\left(\delta_{ij}|e|-|e\cap W_{E_i}|\right)$, $\delta_{ij}=1$ if $i=j$, otherwise $\delta_{ij}=0$. Here $\hat{L}_H:\mathbb{C}^{V(\Bar{H})}\to \mathbb{C}^{V(\Bar{H})}$ is an operator associated with the unit contraction hypergraph $\Bar{H}$. 
If we can prove $ \lap$ is  $\mf{R}_s$-compatible, then by \Cref{const-lem}, for any $W_E\in \mf{U}(H)$ with $|W_E|>1$ and $\delta_{V(H)}(v)=c_{_E}$ for all $v\in W_E$, it holds that the eigenvalue $\lambda_{(W_E,\lap)}=(\lap x_{uv})(v)=\sum\limits_{e\in E}\frac{\delta_{E(H)}(e)}{c_{_E}|e|}$ for all $u,v(\ne u)\in W_E$. Thus, \Cref{Tw} leads us to the following result.
\begin{thm}[Complete spectrum of $\lap$ in terms of units]\label{com-unil-l}\Lh
 If $\delta_{V(H)}(v)=c_{_E}$ for all $v\in W_E$, for any $W_E\in\mf{U}(H)$, then the complete spectrum of $\lap$ is the following. 
    \begin{enumerate}[leftmargin=*]
        \item  For each unit $W_E\in \mf{U}(H)$, $\sum\limits_{e\in E}\frac{\delta_{E(H)}(e)}{c_{_E}|e|}$ is an eigenvalue of $\lap$ with multiplicity $ |W_E|-1$, and $S_{W_E}$ is a subspace of eigenspace of the eigenvalue $\sum\limits_{e\in E}\frac{\delta_{E(H)}(e)}{c_{_E}|e|}$.
    \item For each eigenvalue $\lambda$ of $\hat{L}_H$ with eigenvector $y:\mf{U}(H)\to\mathbb{C}$, $\lambda$ is also an eigenvalue of $\lap$ with the $\mf{R}_s$-blow up $y_{_{\mf{R}_s}}$ as an eigenvector.
    \end{enumerate}
\end{thm}
\begin{proof} The operator $\lap$ can be proved $\mathfrak{R}_s$ compatible using the \Cref{equivalence-exm-prop} in the similar method that we have used in the proof of the \Cref{cor_unit_eig}. 
    Since $ \lambda_{(W_E,\lap)}=\sum\limits_{e\in E}\frac{\delta_{E(H)}(e)}{c_{_E}|e|}$ for all $W_E\in\mf{U}(H)$, by \Cref{Tw}, part (1) of this result follows.

    The second part of the theorem follows from \Cref{gen-contract-eig}. The total number of eigenvalues provided by part (1) is $\sum\limits_{W_E\in \mf{U}(H)}(|W_E|-1)=|V(H)|-|\mf{U}(H)|$. Part (2) gives $|\mf{U}(H)|$ numbers of eigenvalues. Since all the eigenvectors provided in parts (1) and (2) are linearly independent, the list gives the complete spectrum of $\lap$.
\end{proof}
As described in the \Cref{cases}(2), the matrix $L_{(B,H)}$ is a special case of $\lap$. For the hypergraph $H$, illustrated in the \Cref{fig:enter-label}, the matrix $L_{(B,H)}=\frac{1}{2}\left[\begin{smallmatrix}
\phantom{-} 6 & -1 & -1 & -1 & -1 & -2\\ -1 &\phantom{-} 2 & -1 & \phantom{-}0 & \phantom{-}0 & \phantom{-}0\\ -1 & -1 & \phantom{-}2 & \phantom{-}0 & \phantom{-}0 & \phantom{-}0\\ -1 & \phantom{-}0 & \phantom{-}0 & \phantom{-}2 & -1 & \phantom{-}0\\ -1 & \phantom{-}0 & \phantom{-}0 & -1 & \phantom{-}2 & \phantom{-}0\\ -2 & \phantom{-}0 & \phantom{-}0 & \phantom{-}0 & \phantom{-}0 & \phantom{-}2
\end{smallmatrix}\right]$. For this matrix $\delta_{V(H)}(v)=1$ for all $v\in V(H)$, and $\delta_{E(H)}(e)=\frac{|e|^2}{|e|-1}$ for all $e\in E(H)$. Thus, as suggested by the \Cref{com-unil-l}(1), the units $W_{E_3}$, and $W_{E_4}$ in $H$, respectively, corresponds to eigenvalues $\sum\limits_{e\in E_3}\frac{\delta_{E(H)}(e)}{c_{_{E_3}}|e|}=\frac{3}{2} $, and $\sum\limits_{e\in E_4}\frac{\delta_{E(H)}(e)}{c_{_{E_3}}|e|}=\frac{3}{2} $   of $L_{(B,H)}$ with eigenvectors $x_{23}$, and $x_{45}$. The eigenvalues of $\hat{L}_{(B,H)}=\frac{1}{2}\left[\begin{smallmatrix}
   \phantom{-} 6&-2&-2&-2\\-1&  \phantom{-}1&  \phantom{-}0&  \phantom{-}0\\-1&  \phantom{-}0&  \phantom{-}1&  \phantom{-}0\\-2&  \phantom{-}0&  \phantom{-}0&  \phantom{-}1
\end{smallmatrix}\right]$ are $0,\frac{1}{2},\frac{9-\sqrt{33}}{4},\frac{9+\sqrt{33}}{4}$, and by the \Cref{com-unil-l}(2), these are also eigenvalues of $L_{(B,H)}$.

\subsection*{Units and general adjacency operator.}Recall that for any $x:V(H)\to\mathbb{C}$, the general adjacency operator is  defined as $	(A_{(H,\delta_{V(H)},\delta_{E(H)})}x)(v)=\sum\limits_{e\in E_v(H)}\frac{\delta_{E(H)}(e)}{\delta_{V(H)}(v)}\frac{1}{|e|^2}\sum\limits_{u\in e;u\neq v}x(u).$
We define $\hat{A}_H:=T_{\mf{R}_s}$, where $T=\adj$.
If $\delta_{V(H)}(v)=c_{_E}$ for all $v\in W_E$, for any $W_E\in\mf{U}(H)$, then   we can show that $\adj$ is $\mathfrak{R}_s$-compatible. 
Thus,  if $\delta_{V(H)}(v)=c_{_E}$ for all $v\in W_E$, for any $W_E\in\mf{U}(H)$, then for all $y:\mf{U}(H)\to\mathbb{C}$,
$$ \hat{A}_Hy=\sum\limits_{i,j=1}^mc^i_jy(W_{E_i})\epsilon_{W_{E_j}},$$
where for some $v\in W_{E_j}, $ $c^i_j=(A_{(H,\delta_{V(H)},\delta_{E(H)})}\chi_{W_{E_i}})(v)=\sum\limits_{e\in E_j}\frac{\delta_{E(H)}(e)}{c_{_{E_j}}}\frac{1}{|e|^2}|W_{E_i}\cap e|$, if $i\ne j$,

otherwise $c^i_i=\sum\limits_{e\in E_i}\frac{\delta_{E(H)}(e)}{c_{_{E_i}}}\frac{1}{|e|^2}(|W_{E_i}\cap e|-1)$. Here $\hat{A}_H:\mathbb{C}^{V(\Bar{H})}\to \mathbb{C}^{V(\Bar{H})}$ is an operator associated with the unit contraction hypergraph $\Bar{H}$. 
Since $\adj$ is $\mf{R}_s$-compatible, by \Cref{const-lem}, if $W_E$ is a unit with $|W_E|>1$, and $\delta_{V(H)}(v)=c_{_E}$ for all $v\in W_E$, then $\lambda_{(W_E,\adj)}=(\adj x_{uv})(v)=-\sum\limits_{e\in E}\frac{\delta_{E(H)}(e)}{c_{_E}|e|^2}$. Thus, we have the following result. 
\begin{thm}[Complete spectrum of $\adj$ in terms of units]\label{com-unil-a}\Lh
 If $\delta_{V(H)}(v)=c_{_E}$ for all $v\in W_E$, for any $W_E\in\mf{U}(H)$, then the complete spectrum of $\adj$ is the following.
    \begin{enumerate}[leftmargin=*]
        \item  For each unit $W_E\in \mf{U}(H)$, $-\sum\limits_{e\in E}\frac{\delta_{E(H)}(e)}{c_{_E}|e|^2}$ is an eigenvalue of $\adj$ with multiplicity $ |W_E|-1$, and $S_{W_E}$ is a subspace of eigenspace of the eigenvalue $-\sum\limits_{e\in E}\frac{\delta_{E(H)}(e)}{c_{_E}|e|^2}$.
    \item For each eigenvalue $\lambda$ of $\hat{A}_H$ with eigenvector $y:\mf{U}(H)\to\mathbb{C}$, $\lambda$ is also an eigenvalue of $\adj$ with the $\mf{R}_s$-blow up $y_{_{\mf{R}_s}}$ as an eigenvector.
    \end{enumerate}
\end{thm}
\begin{proof}The operator $\lap$ can be proved $\mathfrak{R}_s$ compatible using the \Cref{equivalence-exm-prop} in the similar method that we have used in the proof of the \Cref{cor_unit_eig}. 
    Since $ \lambda_{(W_E,\lap)}=\sum\limits_{e\in E}\frac{\delta_{E(H)}(e)}{c_{_E}|e|}$ for all $W_E\in\mf{U}(H)$, by \Cref{equivalence-eig}, part (1) of this result follows.

    The second part of the theorem follows from \Cref{gen-contract-eig}. The total number of eigenvalues provided by part (1) is $\sum\limits_{W_E\in \mf{U}(H)}(|W_E|-1)=|V(H)|-|\mf{U}(H)|$. Part (2) gives $|\mf{U}(H)|$ numbers of eigenvalues. Since all the eigenvectors provided in parts (1) and (2) are linearly independent, the list gives the complete spectrum of $\adj$.
\end{proof}
In the \Cref{cases}, we show that $A_{(N,H)}$ is a variation of the $\adj$ in which $\delta_{V(H)}(v)=|E_v(H)|$ for all $v\in V(H)$, and $\delta_{E(H)}(e)=\frac{|e|^2}{|e|-1}$ for all $e\in E(H)$. For the hypergraph $H$ illustrated in the \Cref{fig:enter-label}, the matrix $A_{(N,H)}=\frac{1}{6}\left[\begin{smallmatrix}
    0 & 1 & 1 & 1 & 1 & 2\\ 3 & 0 & 3 & 0 & 0 & 0\\ 3 & 3 & 0 & 0 & 0 & 0\\ 3 & 0 & 0 & 0 & 3 & 0\\ 3 & 0 & 0 & 3 & 0 & 0\\ 6 & 0 & 0 & 0 & 0 & 0 
\end{smallmatrix}\right] $. 
As suggested by the \Cref{com-unil-a}, the units $W_{E_3}$, and $W_{E_4}$, respectively, corresponds to the eigenvalues $-\sum\limits_{e\in E_3}\frac{\delta_{E(H)}(e)}{c_{_{E_3}}|e|^2}=-\frac{1}{2}$, and $-\sum\limits_{e\in E_4}\frac{\delta_{E(H)}(e)}{c_{_{E_4}}|e|^2}=-\frac{1}{2}$ of $A_{(N,H)}$. The matrix $\hat{A}_{(N,H)}=\frac{1}{6}\left[\begin{smallmatrix}
    0 & 2 & 2 & 2\\ 3 & 3 & 0 & 0\\ 3 & 0 & 3 & 0\\ 6 & 0 & 0 & 0
\end{smallmatrix}\right]$, and by \Cref{com-unil-a}(2), each eigenvalue of this matrix is an eigenvalue of $A_{(N,H)} $.
   \subsection{Twin units}\label{sec-twin}
   In part (2) of each of the results \Cref{cor_unit_eig}, \Cref{com-unil-l}, and \Cref{com-unil-a}, we have provided quotient operators of the general operators associated with the hypergraph and showed that the spectra of the quotient operators are subsets of the spectra of these general operators. The quotient matrices may not be always small in size.
  In the worst case, they can be equal to the matrix representations of the general operators. Using units in this subsection, we introduce a new building block that can further help us to compute some eigenvalues of the quotient operators.  
 
   % \Lh
   % Consider the equivalence relation
   % $$\mf{R}_H=\{\{u,v\}\subseteq V(H):E_u(H)\cap E_v(H)=\emptyset, |E_u(H)|=|E_v(H)|\}$$
   % on the set of vertices. If $\{u,v\}\subseteq \mf{R}_H$, then we refer to $u$ and $v$ are twin vertices in $H$. 
\begin{df}[Twin units]
    \Lh
    For two distinct $W_E,W_F\in\mf{U}(H)$, if there exists a bijection $\mf{f}_{EF}:E\to F$ such that 
    $$\mf{f}_{EF}(e)=(e\setminus W_E)\cup W_F\text{~for all~}e\in E, $$ then we refer $\mf{f}_{EF}$ to as the \emph{canonical bijection} between $E$, and $F$. A pair of units $W_E,$ and $W_F$ are called \emph{twin units} if a canonical bijection exists between their generators $E,$ and $F$.
\end{df}
For example, in the hypergraph $H$, given in \Cref{fig:unit}, $W_{E_6}$, and $W_{E_7}$ are twin units. The canonical bijection $\mf{f}_{E_6E_7}:E_6\to E_7$ is defined by $e_4\mapsto e_5$, and $e_6\to e_7$. The pair $W_{E_8}$, and $W_{E_1}$ are twin units. The canonical bijection $\mf{f}_{E_8E_1}:E_8\to E_1$ is defined by $e_6\mapsto e_4$, and $e_7\to e_5$. Similarly, any two units in $\{W_{E_3},W_{E_4},W_{E_5}\}$ are twin units.

	\begin{prop}\label{twin_nbd}
		Twin units can not be unit-neighbours to each other.
	\end{prop}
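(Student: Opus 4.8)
The plan is to argue by contradiction. Suppose $W_E$ and $W_F$ are twin units that are, at the same time, unit-neighbours. Being unit-neighbours, there is a hyperedge $e^{*}\in E(H)$ with $W_E\cup W_F\subseteq e^{*}$. First I would record the two immediate consequences of this containment: since $W_E\subseteq e^{*}$, any vertex $v\in W_E$ satisfies $e^{*}\in E_v(H)=E$, so $e^{*}\in E$; and likewise $W_F\subseteq e^{*}$ forces $e^{*}\in F$. Hence $e^{*}\in E\cap F$, and in particular $e^{*}$ is a genuine element of the domain $E$ and of the codomain $F$ of the canonical bijection $\mf{f}_{EF}\colon E\to F$.

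The key step is to extract a structural invariant of the canonical bijection. For every $e\in E$ we have $\mf{f}_{EF}(e)=(e\setminus W_E)\cup W_F$. Because $W_E$ and $W_F$ are distinct units, they are distinct $\mf{R}_s$-equivalence classes and therefore disjoint, so $W_F\cap W_E=\emptyset$; moreover $(e\setminus W_E)\cap W_E=\emptyset$ trivially. Consequently $\mf{f}_{EF}(e)\cap W_E=\emptyset$ for \emph{every} $e\in E$, that is, no element of the image of $\mf{f}_{EF}$ meets $W_E$. This is the invariant I would isolate, and it is really the crux of the argument.

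With that invariant in hand the contradiction is immediate. On the one hand $e^{*}\in F$, so since $\mf{f}_{EF}$ is a bijection (in particular surjective) onto $F$, there must exist $e\in E$ with $\mf{f}_{EF}(e)=e^{*}$. On the other hand $e^{*}\supseteq W_E$ and $W_E\neq\emptyset$ (units are nonempty equivalence classes), so $e^{*}\cap W_E=W_E\neq\emptyset$, whereas $\mf{f}_{EF}(e)\cap W_E=\emptyset$ by the invariant. Thus $\mf{f}_{EF}(e)\neq e^{*}$ for all $e\in E$, so $e^{*}$ has no preimage, contradicting surjectivity of $\mf{f}_{EF}$. Therefore twin units cannot be unit-neighbours.

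I do not expect a serious obstacle here: the whole proof hinges on the single observation that the canonical bijection produces sets disjoint from $W_E$, so that a hyperedge containing $W_E\cup W_F$ (which lies in $F$ but necessarily meets $W_E$) can never appear as an image. The only point requiring a little care is the bookkeeping that $e^{*}\in E\cap F$ and that $W_E, W_F$ are disjoint and nonempty; once these are in place the surjectivity of $\mf{f}_{EF}$ delivers the contradiction directly.
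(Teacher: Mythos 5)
Your proof is correct and takes essentially the same route as the paper's: both argue by contradiction from a hyperedge $e^{*}\supseteq W_E\cup W_F$, note that $e^{*}$ lies in the generating sets, and use the defining formula of the canonical bijection $\mf{f}_{EF}$ together with the disjointness of distinct units to reach a contradiction. The only (cosmetic) difference is direction: the paper pushes the hyperedge forward through $\mf{f}_{EF}$ and asserts $e\setminus W_E=f\setminus W_F$, while you pull $e^{*}\in F$ back via surjectivity using the invariant $\mf{f}_{EF}(e)\cap W_E=\emptyset$ --- a formulation that is in fact slightly more careful, since the paper's asserted identity is exactly what fails in the contradiction scenario.
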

	\begin{proof}
	%If $W_{E_1}=W_{E_2}$ then they can not be unit-neighbours.	
	Suppose that $W_{E_1},W_{E_2}(\ne W_{E_1})\in \mathfrak{U}(H)$ are twin units. If possible, let us assume that $W_{E_1}, W_{E_2}$ are unit neighbours. Then, there exists $e\in E(H)$ such that $W_{E_1}\cup W_{E_2}\subseteq e$. Now, there exists $f\in E_2$ such that $e\setminus W_{E_1}=f\setminus W_{E_2}$. Since $W_{E_1}\cup W_{E_2}\subseteq e$, we have $W_{E_2}\subseteq e\setminus W_{E_1}$. This contradicts $e\setminus W_{E_1}=f\setminus W_{E_2}$. Therefore, our assumption is wrong. Hence the proof follows.
	\end{proof}
The notion of twin units
yields an equivalence relation $\mathfrak{R}_{tw}$ on $\mathfrak{U}(H)$, defined by
	$$\mathfrak{R}_{tw}=\{(W_{E_1},W_{E_2})\in \mathfrak{U}(H)^2:W_{E_1},W_{E_2} \text{ are twin units}\}.$$
  We denote the $\mathfrak{R}_{tw}$-equivalence class of a unit $W_{E_i}$ %$\in \mathfrak{U}(H)$
as $\mathfrak{C}(W_{E_i})$. We denote the collection of all $\mathfrak{R}_{tw}$-equivalence classes in $\mathfrak{U}(H)$ by $\mathfrak{C}(\mathfrak{U}(H))$.  
\subsection*{Twin units and spectra of hypergraphs}
   Like units, twin units are also symmetric structures. Thus, we have the following Theorem.
   \begin{thm}\label{twin-adj}
       Let $H$ be a hypergraph such that $W_{E_i}$ and $W_{E_j}$ are twin units in $H$. If $\delta_{V(H)}(v)=c$, a constant for all $v\in W_{E_i}\cup W_{E_j} $, and  $\frac{\delta_{E(H)}(e)}{|e|^2}=w$, a constant for all $e\in E_i\cup E_j$ then $\frac{w}{c}\sum\limits_{e\in E_i}|e\setminus W_{E_i}|$ is an eigenvalue of $L_{(H,\delta_{V(H)},\delta_{E(H)})}$ with the eigenvector $y=|W_{{E_{i}}}|\chi_{_{W_{E_j}}}-|W_{E_{j}}|\chi_{_{W_{E_{i}}}}$.
   \end{thm}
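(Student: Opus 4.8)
The plan is to verify directly that $\lap y=\lambda y$ pointwise, where $\lambda=\tfrac{w}{c}\sum_{e\in E_i}|e\setminus W_{E_i}|$, by expanding $(\lap y)(v)$ from the definition of the general Laplacian and splitting into three cases according to whether $v$ lies in $W_{E_i}$, in $W_{E_j}$, or outside both. The first step is a preliminary observation about how hyperedges meet a unit: since all vertices of $W_{E_i}$ share the star $E_i$, every hyperedge $e$ satisfies $e\cap W_{E_i}\in\{\emptyset,W_{E_i}\}$, with $e\cap W_{E_i}=W_{E_i}$ precisely when $e\in E_i$ (and symmetrically for $W_{E_j}$). Combining this with \Cref{twin_nbd}, which forbids the twin units $W_{E_i}$ and $W_{E_j}$ from being unit-neighbours, gives $E_i\cap E_j=\emptyset$ and the fact that no single hyperedge contains both units. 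These facts collapse the edge sums of $y$ into the clean formula
$$\sum_{u\in e}y(u)=\begin{cases}-|W_{E_i}||W_{E_j}| & \text{if } e\in E_i,\\ +|W_{E_i}||W_{E_j}| & \text{if } e\in E_j,\\ 0 & \text{otherwise,}\end{cases}$$
which drives the whole computation.

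For $v\in W_{E_i}$ (so $E_v(H)=E_i$ and $y(v)=-|W_{E_j}|$) I substitute the first line of the formula into $(\lap y)(v)=\sum_{e\in E_i}\tfrac{\delta_{E(H)}(e)}{c|e|^2}\bigl(|e|\,y(v)-\sum_{u\in e}y(u)\bigr)$; the bracket factors as $-|W_{E_j}|(|e|-|W_{E_i}|)=-|W_{E_j}|\,|e\setminus W_{E_i}|$, and using $\delta_{E(H)}(e)/|e|^2=w$ reduces the whole expression to $\lambda y(v)$. The case $v\in W_{E_j}$ is analogous and yields $\tfrac{w}{c}|W_{E_i}|\sum_{e\in E_j}|e\setminus W_{E_j}|$; to see this equals $\lambda y(v)=\lambda|W_{E_i}|$ I invoke the canonical bijection $\mf{f}_{E_iE_j}\colon E_i\to E_j$, observing $\mf{f}_{E_iE_j}(e)\setminus W_{E_j}=e\setminus W_{E_i}$ (valid because $e\cap W_{E_j}=\emptyset$ for $e\in E_i$), so that $\sum_{e\in E_j}|e\setminus W_{E_j}|=\sum_{e\in E_i}|e\setminus W_{E_i}|$.

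The only genuinely delicate case is $v\notin W_{E_i}\cup W_{E_j}$, where $y(v)=0$ and I must show $(\lap y)(v)=0$. Here the $|e|\,y(v)$ term vanishes and the surviving quantity is $-\sum_{e\in E_v(H)}\tfrac{\delta_{E(H)}(e)}{\delta_{V(H)}(v)|e|^2}\sum_{u\in e}y(u)$; by the edge-sum formula only hyperedges in $E_i$ and $E_j$ contribute, leaving a multiple of $|E_v\cap E_i|-|E_v\cap E_j|$. The main obstacle is to show this difference vanishes for every external $v$: since $v$ lies outside both units, the equivalence $v\in e\iff v\in\mf{f}_{E_iE_j}(e)$ holds for each $e\in E_i$, so $\mf{f}_{E_iE_j}$ restricts to a bijection between $\{e\in E_i:v\in e\}$ and $\{f\in E_j:v\in f\}$, forcing $|E_v\cap E_i|=|E_v\cap E_j|$ and hence $(\lap y)(v)=0$. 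Assembling the three cases gives $\lap y=\lambda y$ on all of $V(H)$, completing the proof. I expect the external-vertex cancellation to be the one place the twin structure is truly essential, whereas the two on-support cases are essentially the constant-eigenvalue computation $\lambda_{(W_E,\lap)}$ already recorded just before \Cref{com-unil-l}.
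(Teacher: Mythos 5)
Your proposal is correct and follows essentially the same route as the paper's own proof: a pointwise verification of $\lap y=\lambda y$ split into the cases $v\in W_{E_i}$, $v\in W_{E_j}$, and $v$ outside both units, using $E_i\cap E_j=\emptyset$, the identity $\mf{f}_{E_iE_j}(e)\setminus W_{E_j}=e\setminus W_{E_i}$ to equate the two on-support eigenvalue expressions, and the restriction of the canonical bijection to $E_v(H)\cap E_i\to E_v(H)\cap E_j$ to force the cancellation at external vertices. Your preliminary edge-sum formula is just a cleaner packaging of the same computation, and your identification of the external-vertex cancellation as the step where the twin structure is essential matches the paper exactly.
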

   \begin{proof}
       For any $v\in V(H)$, suppose that $E^k_v(H)=E_v(H)\cap E_k$ for $k=i,j$. Since $W_i$ and $W_{E_j}$ are twin units, $E_i\cap E_j=\emptyset$. Thus,
	    \begin{align*}
	       (L_{(H,\delta_{V(H)},\delta_{E(H)})}y)(v)%&=\sum\limits_{e\in E_v(H)}\frac{\delta_{E(H)}(e)}{\delta_{V(H)}(v)}\frac{1}{|e|^2}\sum\limits_{u\in e}(y_j(v)-y_j(u))\\
	        &=\sum\limits_{e\in E^i_v(H)}\frac{\delta_{E(H)}(e)}{\delta_{V(H)}(v)}\frac{1}{|e|^2}\sum\limits_{u\in e}(y(v)-y(u))\\
	        &+\sum\limits_{e\in E^{j}_v(H)}\frac{\delta_{E(H)}(e)}{\delta_{V(H)}(v)}\frac{1}{|e|^2}\sum\limits_{u\in e}(y(v)-y(u)).
	    \end{align*} 
        For any $v\in V(H)\setminus (W_{E_i}\cup W_{E_{j}})$, considering the restriction of the canoncal map $$\mathfrak{f}=\mathfrak{f}_{E_iE_{j}}|_{E_v^i(H)}:E_v^i(H)\to E_v^j(H),$$ we have 
	    \begin{align*}
	        (L_{(H,\delta_{V(H)},\delta_{E(H)})}y_j)(v)&=\frac{w_a}{\delta_{V(H)}(v)}\left(\sum\limits_{e\in E^i_v(H)}|W_{E_i}||W_{E_{j}}|-\sum\limits_{\mathfrak{f}(e)\in E^j_v(H)}|W_{E_i}||W_{E_{j}}|\right)=0.
	    \end{align*}
	    For any $v\in W_{E_j}$, we have, $$E_v(H)=E_v^j(H)=E_j,E^{i}_v=\emptyset. $$ Since $e\setminus W_{E_j}=\mathfrak{f}(e)\setminus W_{E_{i}}$,
        we have %$(L_{(H,\delta_{V(H)},\delta_{E(H)})}y_j)(v) =\frac{w_a}{c_a}\sum\limits_{e\in E_j}|e\setminus W_{E_j}|y_j(v)=\frac{w_a}{c_a}\sum\limits_{e\in E_{n_a}}|e\setminus W_{E_{n_a}}|y_j(v)$.

\begin{align*}
    (L_{(H,\delta_{V(H)},\delta_{E(H)})}y_j)(v) &=\frac{w}{c}\sum\limits_{e\in E_j}|e\setminus W_{E_j}|y_j(v)\\&=\frac{w}{c}\sum\limits_{e\in E_{i}}|e\setminus W_{E_{i}}|y_j(v).
\end{align*}
Similarly, for any $v\in W_{E_i}$, we have, $ (L_{(H,\delta_{V(H)},\delta_{E(H)})}y)(v) =\frac{w}{c}\sum\limits_{e\in E_{i}}|e\setminus W_{E_{i}}|y_j(v)$. This completes the proof.
   \end{proof}
  The matrix  $L_{(B,H)}$ is a special case of $\lap$. For the hypergraph $H$, illustrated in the \Cref{fig:enter-label}, the units $W_{E_3}$, and $W_{E_4}$ are a pair of twin units. Here $w=\frac{1}{2}$, $c=1$, and consequently by the \Cref{twin-adj}, the matrix $L_{(B,H)}$ has an eigenvalue $\frac{w}{c}\sum\limits_{e\in E_3}|e\setminus W_{E_3}|= \frac{1}{2}$.
   The previous theorem leads us to the following result. Given any  $\mathfrak{R}_{tw}$-equivalence class $a\in \mathfrak{C}(\mathfrak{U}(H))$, we denote the number of units in $a$ as $n_a$.
  	\begin{cor}\label{lap_twin_unit}
	    Let $H$ be a hypergraph such that $a=\{W_{E_1},W_{E_2},\ldots,W_{E_{n_a}}\}\in \mathfrak{C}(\mathfrak{U}(H))$, $n_a>1$. If $\delta_{V(H)}(v)=c_a$ for all $v\in \bigcup\limits_{i=1}^{n_a} W_{E_i} $, $\frac{\delta_{E(H)}(e)}{|e|^2}=w_a$ for all $e\in E_a= \bigcup\limits_{i=1}^{n_a} E_i$ then $\frac{w_a}{c_a}\sum\limits_{e\in E_{n_a}}|e\setminus W_{E_{n_a}}|$ is an eigenvalue of $L_{(H,\delta_{V(H)},\delta_{E(H)})}$ with multiplicity at least $n_a-1$.
	\end{cor}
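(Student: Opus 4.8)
The plan is to produce the claimed multiplicity by applying the preceding theorem to the $n_a-1$ pairs $\{W_{E_i},W_{E_{n_a}}\}$ for $i=1,\ldots,n_a-1$, harvesting one eigenvector from each, and then verifying that they all belong to the \emph{same} eigenvalue and are linearly independent. Concretely, since $a=\{W_{E_1},\ldots,W_{E_{n_a}}\}\in\mathfrak{C}(\mathfrak{U}(H))$ consists of mutually twin units, each pair $\{W_{E_i},W_{E_{n_a}}\}$ is a twin pair, and the hypotheses of the corollary ($\delta_{V(H)}\equiv c_a$ on $\bigcup_i W_{E_i}$ and $\delta_{E(H)}(e)/|e|^2\equiv w_a$ on $E_a$) supply exactly the constants needed to invoke the preceding theorem on that pair. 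Thus each
$$ y_i=|W_{E_i}|\chi_{W_{E_{n_a}}}-|W_{E_{n_a}}|\chi_{W_{E_i}} $$
is an eigenvector of $L_{(H,\delta_{V(H)},\delta_{E(H)})}$.

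The first substantive point is that the eigenvalue attached to every pair is the common value $\lambda_a:=\frac{w_a}{c_a}\sum_{e\in E_{n_a}}|e\setminus W_{E_{n_a}}|$. The preceding theorem returns, for $\{W_{E_i},W_{E_{n_a}}\}$, the eigenvalue $\frac{w_a}{c_a}\sum_{e\in E_i}|e\setminus W_{E_i}|$, so I must check that $\sum_{e\in E_i}|e\setminus W_{E_i}|$ is independent of $i$. This is where the twin-unit structure enters: the canonical bijection $\mathfrak{f}_{E_iE_{n_a}}:E_i\to E_{n_a}$ sends $e\mapsto(e\setminus W_{E_i})\cup W_{E_{n_a}}$, and since twin units are not unit-neighbours (\Cref{twin_nbd}) one has $E_i\cap E_{n_a}=\emptyset$, whence by the defining property of units $W_{E_{n_a}}\cap e=\emptyset$ for every $e\in E_i$. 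Therefore $\mathfrak{f}_{E_iE_{n_a}}(e)\setminus W_{E_{n_a}}=e\setminus W_{E_i}$, so the bijection preserves the quantity $|e\setminus W_{E}|$, and re-indexing the sum along it gives $\sum_{e\in E_i}|e\setminus W_{E_i}|=\sum_{e\in E_{n_a}}|e\setminus W_{E_{n_a}}|$. Hence all $y_i$ are eigenvectors for the single eigenvalue $\lambda_a$.

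It remains to verify that $y_1,\ldots,y_{n_a-1}$ are linearly independent, which yields multiplicity at least $n_a-1$. Since distinct units are disjoint, the characteristic functions $\chi_{W_{E_1}},\ldots,\chi_{W_{E_{n_a}}}$ have pairwise disjoint supports and are linearly independent. If $\sum_{i=1}^{n_a-1}\beta_i y_i=0$, then evaluating the combination at a vertex of a fixed $W_{E_{i_0}}$ with $i_0<n_a$ annihilates every term except $-|W_{E_{n_a}}|\beta_{i_0}\chi_{W_{E_{i_0}}}$, forcing $\beta_{i_0}=0$; as $i_0$ is arbitrary, all $\beta_i$ vanish. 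Equivalently, writing $y_i=|W_{E_i}||W_{E_{n_a}}|\bigl(\tfrac{\chi_{W_{E_{n_a}}}}{|W_{E_{n_a}}|}-\tfrac{\chi_{W_{E_i}}}{|W_{E_i}|}\bigr)$ exhibits the $y_i$ as spanning the $(n_a-1)$-dimensional space of zero-sum combinations of the independent vectors $\chi_{W_{E_i}}/|W_{E_i}|$.

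The only genuinely non-routine step is the invariance $\sum_{e\in E_i}|e\setminus W_{E_i}|=\sum_{e\in E_{n_a}}|e\setminus W_{E_{n_a}}|$; everything else is a bookkeeping application of the preceding theorem together with the disjointness of units. I expect the main obstacle to be making the canonical-bijection argument airtight—in particular justifying $W_{E_{n_a}}\cap e=\emptyset$ for $e\in E_i$ from \Cref{twin_nbd} and the defining property of units—after which the eigenvalue is seen to be pair-independent and the linear-independence count completes the proof.
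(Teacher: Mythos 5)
Your proposal is correct and follows essentially the same route as the paper, which states the corollary as an immediate consequence of the preceding theorem applied to the pairs $\{W_{E_i},W_{E_{n_a}}\}$; the two details you verify explicitly --- that the eigenvalue is pair-independent (via $e\setminus W_{E_i}=\mathfrak{f}_{E_iE_{n_a}}(e)\setminus W_{E_{n_a}}$, which the paper's theorem proof already uses, relying on \Cref{twin_nbd} and the disjointness of a unit from hyperedges outside its generating set) and that the $n_a-1$ eigenvectors are linearly independent (via disjoint supports of units) --- are exactly the steps the paper leaves tacit. No gaps.
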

	We provide similar results for $A_{(H,\delta_{V(H)},\delta_{E(H)})}$ and $Q_{(H,\delta_{V(H)},\delta_{E(H)})}$ in the next result.

\begin{thm}\label{twin-units-adj}
       Let $H$ be a hypergraph such that $W_{E_i}$ and $W_{E_j}$ are a pair of equipotent twin units in $H$ with $|W_{E_i}|=|W_{E_j}|=m$. If $\delta_{V(H)}(v)=c$, a constant for all $v\in W_{E_i}\cup W_{E_j} $, and  $\frac{\delta_{E(H)}(e)}{|e|^2}=w$, a constant for all $e\in E_i\cup E_j$ then $\frac{w}{c}s(m-1)$ and $\frac{w}{c}ms$ is an eigenvalue of $A_{(H,\delta_{V(H)},\delta_{E(H)})}$ and $Q_{(H,\delta_{V(H)},\delta_{E(H)})}$ respectively with the eigenvector $y=|W_{{E_{i}}}|\chi_{_{W_{E_j}}}-|W_{E_{j}}|\chi_{_{W_{E_{i}}}}$, where $s=|E_i|$.
   \end{thm}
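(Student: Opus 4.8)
The plan is to mirror the proof of the preceding Laplacian result and to exploit that $\q$ and $\adj$ differ only by a diagonal term: for every $v\in V(H)$ one has $(\q x)(v)=(\adj x)(v)+\sum_{e\in E_v(H)}\frac{\delta_{E(H)}(e)}{\delta_{V(H)}(v)}\frac{1}{|e|^2}x(v)$, so once the $\q$-eigenvalue is settled, the $\adj$-eigenvalue follows by subtracting the diagonal contribution. Since $|W_{E_i}|=|W_{E_j}|=m$, the test vector simplifies to $y=m\chi_{W_{E_j}}-m\chi_{W_{E_i}}$, i.e. $y(v)=m$ on $W_{E_j}$, $y(v)=-m$ on $W_{E_i}$, and $y(v)=0$ elsewhere. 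Note also that the canonical bijection $\mf{f}_{E_iE_j}\colon E_i\to E_j$ forces $|E_j|=|E_i|=s$.

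First I would record the load-bearing combinatorial fact. Because twin units are never unit-neighbours (\Cref{twin_nbd}), $E_i\cap E_j=\emptyset$; and because $W_{E_k}\cap e=\emptyset$ for every $e\notin E_k$, the hyperedge sum $\sum_{u\in e}y(u)=m|e\cap W_{E_j}|-m|e\cap W_{E_i}|$ takes only three values: $-m^2$ when $e\in E_i$ (such $e$ meets $W_{E_i}$ in all $m$ vertices and misses $W_{E_j}$), $+m^2$ when $e\in E_j$, and $0$ when $e\notin E_i\cup E_j$. Together with the hypothesis $\frac{\delta_{E(H)}(e)}{|e|^2}=w$ on $E_i\cup E_j$, this reduces $(\q y)(v)$ at any vertex to a signed count of the hyperedges of $E_i$ and $E_j$ passing through $v$.

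Next I would split the verification into three vertex types. For $v\in W_{E_j}$ one has $E_v(H)=E_j$ and $\delta_{V(H)}(v)=c$, so $(\q y)(v)=\frac{w}{c}\sum_{e\in E_j}m^2=\frac{w}{c}m^2 s=\frac{w}{c}ms\cdot y(v)$; the case $v\in W_{E_i}$ is symmetric with signs reversed and gives the same ratio. For $v\notin W_{E_i}\cup W_{E_j}$ the $E_i$- and $E_j$-contributions must cancel: here I would restrict the canonical bijection to $\mf{f}_{E_iE_j}\colon E_v^i(H)\to E_v^j(H)$, which is well defined precisely because $v\notin W_{E_i}$ gives $v\in e\setminus W_{E_i}\subseteq\mf{f}_{E_iE_j}(e)$, so $|E_v^i(H)|=|E_v^j(H)|$ and the $-m^2$ and $+m^2$ terms cancel, yielding $(\q y)(v)=0=\frac{w}{c}ms\cdot y(v)$. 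This establishes $\q y=\frac{w}{c}ms\,y$.

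Finally, for $\adj$ I would subtract the diagonal term. On $W_{E_j}$ (and symmetrically on $W_{E_i}$) the diagonal coefficient is $\sum_{e\in E_j}\frac{w}{c}=\frac{w}{c}s$, whence $(\adj y)(v)=(\q y)(v)-\frac{w}{c}s\,y(v)=\frac{w}{c}(ms-s)y(v)=\frac{w}{c}s(m-1)y(v)$, while off the two units $y$ vanishes and so does every term; thus $\adj y=\frac{w}{c}s(m-1)\,y$. The main obstacle I anticipate is the off-unit case: one must verify carefully that $\mf{f}_{E_iE_j}$ genuinely restricts to a bijection between the stars $E_v^i(H)$ and $E_v^j(H)$ and preserves membership of the external vertex $v$, since this is exactly where the twin hypothesis is used. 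The three-valued hyperedge-sum observation and the disjointness $E_i\cap E_j=\emptyset$ are the facts that make every remaining step routine.
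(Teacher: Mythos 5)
Your proposal is correct and follows essentially the same route as the paper's proof: the same three-way vertex case split, the same use of $E_i\cap E_j=\emptyset$ (forced by \Cref{twin_nbd}), and the same restriction of the canonical bijection $\mf{f}_{E_iE_j}$ to the stars $E^i_v(H)\to E^j_v(H)$ to get the off-unit cancellation. Your only deviation is organizational: the paper verifies $\adj y$ and $\q y$ by two parallel direct computations, whereas you compute $\q y$ once and recover the $\adj$ eigenvalue by subtracting the diagonal term $\sum_{e\in E_v(H)}\frac{\delta_{E(H)}(e)}{\delta_{V(H)}(v)}\frac{1}{|e|^2}\,y(v)$, which is a sound and slightly more economical bookkeeping of the same argument.
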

 \begin{proof}
Let $E^k_v(H)=E_v(H)\cap E_k$ for $k=i,j$ for all $v\in V(H)$. Thus,
     \begin{align*}
	        (A_{(H,\delta_{V(H)},\delta_{E(H)})}y)(v)%&=\sum\limits_{e\in E_v(H)}\frac{\delta_{E(H)}(e)}{\delta_{V(H)}(v)}\frac{1}{|e|^2}\sum\limits_{u\in e;u\neq v}y_j(u)\\
	        &=\sum\limits_{e\in E^j_v(H)}\frac{\delta_{E(H)}(e)}{\delta_{V(H)}(v)}\frac{1}{|e|^2}\sum\limits_{u\in e;u\neq v}y(u)\\+&\sum\limits_{e\in E^{i}_v(H)}\frac{\delta_{E(H)}(e)}{\delta_{V(H)}(v)}\frac{1}{|e|^2}\sum\limits_{u\in e;u\neq v}y(u),
	    \end{align*}
      Thus, similarly, in the previous proof, we have $(A_{(H,\delta_{V(H)},\delta_{E(H)})}y_j)(v)=0$. Proceeding in the same way, we have $(Q_{(H,\delta_{V(H)},\delta_{E(H)})}y_j)(v)=0$.
     For any $v\in W_{E_j}$, we have, $E_v(H)=E_v^j(H)=E_j,E^{i}_v(H)=\emptyset $, and
       \begin{align*}
         (A_{(H,\delta_{V(H)},\delta_{E(H)})}y)(v) &=\frac{w}{c}|E_j|(|W_{E_j}|-1)y_j(v)\\&=\frac{w}{c}s(m-1)y(v).
     \end{align*}
	 Similarly, for any $ v\in W_{E_{i}}$, $E_v(H)=E_v^{i}(H)=E_{i},E^j_v=\emptyset $. Since $e\setminus W_{E_j}=\mathfrak{f}(e)\setminus W_{E_{i}}$ for all $e\in E_j$, we have $$A_{(H,\delta_{V(H)},\delta_{E(H)})}y(v)=\frac{w}{c}s(m-1)y(v).$$
  
  Therefore, $\frac{w}{c}s(m-1) $ is an eigenvalue of $A_{(H,\delta_{V(H)},\delta_{E(H)})}$.
  Since $E_v(H)=E_j$ for each $v\in W_{E_j}$ and $\sum\limits_{u\in e}y(u)=m$ for all $e\in E_j$, we have $$(Q_{(H,\delta_{V(H)},\delta_{E(H)})}y)(v)=m|E_j|\frac{w}{c}y(v)=ms\frac{w}{c}y(v).$$
    Similarly, we can prove the same for $v\in W_{E_{i}}$. Therefore $(Q_{(H,\delta_{V(H)},\delta_{E(H)})}y)=ms\frac{w}{c}y$.
	    
 \end{proof}
As described in the \Cref{cases}, $A_{(N,H)}$ is a special case of the $\adj$. In the hypergraph $H$ illustrated in \Cref{fig:enter-label}, for the pair of twin units $W_{E_3}$, and $W_{E_4}$ we have $w=\frac{1}{2}$, $c=1$, $m=2$, and $s=1$. Consequently, the pair of twin units corresponds to the eigenvalue $\frac{1}{2}$ of the eigenvalue $A_{(N,H)}$. The matrix $Q_{(R,H)}$ is a special case of $\q$. For the pair of twin units $W_{E_3}$, and $W_{E_4}$ in the hypergraph $H$ illustrated in the \Cref{fig:enter-label} , we have $w=1$, $c=1$, $ m=2$, $s=1$. Therefore, by the \Cref{twin-units-adj}, $\frac{w}{c}ms=2$ is an eigenvalue of  $Q_{(R,H)}$.
 The next result directly follows from the above Theorem. For any $a=\{W_{E_1},W_{E_2},\ldots,W_{E_{n_a}}\}\in \mathfrak{C}(\mathfrak{U}(H))$ it holds that $|E_1|=|E_2|=\ldots=|E_{n_a}|$. That is , there exists a constant $s_a$ such that $|E_i|=s_a$ for all $i=1,\ldots,n_a$. We refer to $s_a$ as the generator size of $a$.
		\begin{cor}\label{adj_twin_unit}
	    Let $H$ be a hypergraph such that $a=\{W_{E_1},W_{E_2},\ldots,W_{E_{n_a}}\}\in \mathfrak{C}(\mathfrak{U}(H))$, $n_a(\in\n)>1$, and  $|W_{E_i}|=m_a$ for all $W_{E_i}\in a$. If $\delta_{V(H)}(v)=c_a$ for all $v\in \bigcup\limits_{i=1}^{n_a} W_{E_i} $, $\frac{\delta_{E(H)}(e)}{|e|^2}=w_a$ for all $e\in E_a= \bigcup\limits_{i=1}^{n_a} E_i$ then $\frac{w_a}{c_a}s_a(m_a-1)$ and $\frac{w_a}{c_a}m_as_a$ are eigenvalues of $A_{(H,\delta_{V(H)},\delta_{E(H)})}$ and $Q_{(H,\delta_{V(H)},\delta_{E(H)})}$ respectively with multiplicity at least $n_a-1$, where $s_a$ is the generator size of $a$.
             \end{cor}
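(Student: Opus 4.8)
The plan is to deduce this corollary directly from the preceding theorem by collecting one eigenvector per pair of twin units inside the equivalence class $a$ and then counting dimensions. First I would record that all units in $a=\{W_{E_1},\ldots,W_{E_{n_a}}\}$ share a common generator size. Indeed, for any two twin units $W_{E_i},W_{E_j}\in a$ the canonical bijection $\mathfrak{f}_{E_iE_j}:E_i\to E_j$ exists, so $|E_i|=|E_j|$; hence $s_a:=|E_i|$ is well defined and common to the whole class. Combined with the hypotheses $|W_{E_i}|=m_a$, $\delta_{V(H)}(v)=c_a$ on $\bigcup_i W_{E_i}$, and $\delta_{E(H)}(e)/|e|^2=w_a$ on $E_a=\bigcup_i E_i$, this shows that the hypotheses of the preceding theorem hold for every pair $W_{E_i},W_{E_j}$ in $a$ with the same constants $m=m_a$, $s=s_a$, $w=w_a$, $c=c_a$.

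Next, for each pair I would invoke the preceding theorem to obtain the eigenvector
$$ y_{ij}=|W_{E_i}|\chi_{_{W_{E_j}}}-|W_{E_j}|\chi_{_{W_{E_i}}}=m_a\left(\chi_{_{W_{E_j}}}-\chi_{_{W_{E_i}}}\right), $$
which is an eigenvector of $A_{(H,\delta_{V(H)},\delta_{E(H)})}$ for the eigenvalue $\frac{w_a}{c_a}s_a(m_a-1)$ and of $Q_{(H,\delta_{V(H)},\delta_{E(H)})}$ for the eigenvalue $\frac{w_a}{c_a}m_as_a$. The decisive observation is that both eigenvalues depend only on the shared constants $m_a,s_a,w_a,c_a$ and not on the particular indices $i,j$; hence every such $y_{ij}$ lies in the \emph{same} eigenspace of $A_{(H,\delta_{V(H)},\delta_{E(H)})}$ (respectively of $Q_{(H,\delta_{V(H)},\delta_{E(H)})}$).

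Finally I would exhibit $n_a-1$ linearly independent vectors inside this common eigenspace, which yields the claimed lower bound on the multiplicity. The natural choice is the ``star'' of pairs $\{y_{1k}:k=2,\ldots,n_a\}$. Since distinct units are disjoint (they are distinct $\mf{R}_s$-equivalence classes), the characteristic functions $\chi_{_{W_{E_1}}},\ldots,\chi_{_{W_{E_{n_a}}}}$ have pairwise disjoint supports and are therefore linearly independent; writing a vanishing combination $\sum_{k=2}^{n_a}c_k y_{1k}=0$ as $m_a\sum_{k=2}^{n_a}c_k\chi_{_{W_{E_k}}}-m_a\bigl(\sum_{k=2}^{n_a}c_k\bigr)\chi_{_{W_{E_1}}}=0$ forces every $c_k=0$. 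Thus each of the two stated eigenvalues has geometric (and hence algebraic) multiplicity at least $n_a-1$ for the respective operator.

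I do not expect a genuine obstacle here, as the corollary is essentially a bookkeeping consequence of the preceding theorem. The only two points requiring care are verifying that the generator size $s_a$ is constant across the twin class — so that a single eigenvalue serves all pairs and the eigenvectors genuinely coexist in one eigenspace — and the disjointness-based linear independence of the chosen eigenvectors $y_{1k}$.
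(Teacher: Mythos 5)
Your proposal is correct and is exactly the argument the paper intends: the paper gives no explicit proof, stating only that the corollary ``directly follows'' from the preceding theorem, and your write-up supplies precisely the routine details implicit in that remark --- the well-definedness of $s_a$ via the canonical bijections, the application of the theorem to pairs within $a$ with the shared constants $m_a, s_a, w_a, c_a$, and the linear independence of the eigenvectors $y_{1k}=m_a\bigl(\chi_{_{W_{E_k}}}-\chi_{_{W_{E_1}}}\bigr)$, $k=2,\ldots,n_a$, via the pairwise disjoint supports of the units.
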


   %%%%%%%%%%%%%%%
   The eigenvectors provided in \Cref{lap_twin_unit}, \Cref{adj_twin_unit} are linearly independent with the eigenvectors provided in part (1) of each of the \Cref{cor_unit_eig}, \Cref{com-unil-l}, and \Cref{com-unil-a}.
   For any hypergraph $H$ with $\mathfrak{C}(\mathfrak{U}(H))=\{a_1,a_2,\ldots,a_m\}$, $a_i=\{W_{E^i_1},W_{E^i_2},\ldots,W_{E^i_{n_i}}\}$.
   
% We enlist below in \cref{tab:lap} the
Since \Cref{cor_unit_eig} provides $\sum\limits_{W_{E_i}\in \mathfrak{U}(H)}(|W_{E_i}|-1)$ eigenvalues, \Cref{lap_twin_unit} provides $\sum\limits_{a\in\mathfrak{C}(\mathfrak{U}(H))}(|a|-1)$ eigenvalues the remaining number of eigenvalue of $L_{(H,\delta_{V(H)},\delta_{E(H)})}$ are $|\mathfrak{C}(\mathfrak{U}(H))|$. If all the units of $H$ are of the same cardinality, then \Cref{lap_contract} provide us with the remaining eigenvalues. Similarly, our results provide the complete spectra of $Q_{(H,\delta_{V(H)},\delta_{E(H)})}$.

 For any hypergraph $H$, the collection of all the units $\mathfrak{U}(H)$ forms a partition of $V(H)$ and $\mathfrak{C}(\mathfrak{U}(H))$ forms an partition of $ \mathfrak{U}(H)$. These two partitions naturally induce an onto map. We define $\pi:V(H)\to\mathfrak{C} (\mathfrak{U}(H))$ as 
	$\pi(v)=\mathfrak{C}(W_{E_v(H)})$. Since for any $\mathfrak{R}_{tw}$-equivalence class $a\in \mathfrak{C} (\mathfrak{U}(H))$, there exists an unit $W_E\in a$, and the unit $W_E$ contain at least one vertex, say $v$. Since $\pi(v)=a$, the map $\pi$ is onto.
	\begin{prop}\label{quotient-prop}
		Let $H$ be a hypergraph. If $e=\bigcup\limits_{i=1}^n W_{E_i}\in E(H)$ then $e^\prime=\bigcup\limits_{i=1}^n W_{E_i^\prime}\in E(H)$ for all $W_{E_i^\prime}\in \mathfrak{C}(W_{E_i})$, $i=1,2,\ldots,n$.
	\end{prop}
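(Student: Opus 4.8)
The plan is to transform $e$ into $e'$ by replacing the constituent units one at a time, invoking at each step the canonical bijection that witnesses a pair of twin units. First I would record the key structural fact that the twin classes $\mathfrak{C}(W_{E_1}),\ldots,\mathfrak{C}(W_{E_n})$ appearing in the decomposition $e=\bigsqcup_{i=1}^n W_{E_i}$ (which exists by \Cref{edge-unit}) are pairwise distinct. Indeed, if $\mathfrak{C}(W_{E_i})=\mathfrak{C}(W_{E_j})$ for some $i\ne j$, then $W_{E_i}$ and $W_{E_j}$ would be twin units; but both are contained in $e$, so $W_{E_i}\cup W_{E_j}\subseteq e$ makes them unit-neighbours, contradicting \Cref{twin_nbd}. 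Consequently, for each $i$ the chosen twin $W_{E_i'}\in\mathfrak{C}(W_{E_i})$ lies in a twin class different from that of every other $W_{E_j}$ and of every other $W_{E_j'}$, so that $W_{E_i'}$ is a distinct unit from all of them, and distinct units being $\mathfrak{R}_s$-equivalence classes, it is disjoint from all of them.

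Next I would set up the induction. Write $e^{(0)}=e$ and, for $k=1,\ldots,n$, put $e^{(k)}=\bigl(\bigsqcup_{i\le k}W_{E_i'}\bigr)\sqcup\bigl(\bigsqcup_{i>k}W_{E_i}\bigr)$, so that $e^{(n)}=e'$. Assuming $e^{(k-1)}\in E(H)$, I note that $W_{E_k}\subseteq e^{(k-1)}$, so for any $v\in W_{E_k}$ we have $e^{(k-1)}\in E_v(H)=E_k$. Applying the canonical bijection $\mathfrak{f}_{E_kE_k'}\colon E_k\to E_k'$ supplied by the twin-unit structure then yields
$$\mathfrak{f}_{E_kE_k'}\bigl(e^{(k-1)}\bigr)=\bigl(e^{(k-1)}\setminus W_{E_k}\bigr)\cup W_{E_k'}\in E_k'\subseteq E(H).$$
By the disjointness established above, this set equals $e^{(k)}$, which is therefore a hyperedge; this closes the induction and gives $e'=e^{(n)}\in E(H)$.

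The one point that requires care—and the main obstacle—is the disjointness bookkeeping inside the inductive step: I must ensure the newly inserted unit $W_{E_k'}$ does not coincide with any unit already present in $e^{(k-1)}$, for only then is $\bigl(e^{(k-1)}\setminus W_{E_k}\bigr)\cup W_{E_k'}$ the genuine ``swap'' $e^{(k)}$ rather than a strictly smaller set. This is exactly what the distinctness of the twin classes guarantees: every unit occupying $e^{(k-1)}$ other than $W_{E_k}$ lives in a twin class different from $\mathfrak{C}(W_{E_k})$, while $W_{E_k'}\in\mathfrak{C}(W_{E_k})$, so $W_{E_k'}$ differs from each of them and, distinct units being disjoint, meets none of them. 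Finally, since $W_{E_k'}$ is a unit contained in the hyperedge $e^{(k)}$, the uniqueness of the unit decomposition of a hyperedge (\Cref{edge-unit}) confirms that $e^{(k)}$ decomposes precisely as the claimed disjoint union, keeping the induction hypothesis in the correct form for the next step.
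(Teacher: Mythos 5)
Your proof is correct and takes essentially the same route as the paper: the paper's entire argument is to note that each pair $W_{E_i},W_{E_i'}$ supplies a canonical bijection and then to write $e'=(\mathfrak{f}_{E_nE_n'}\circ\cdots\circ\mathfrak{f}_{E_2E_2'}\circ\mathfrak{f}_{E_1E_1'})(e)\in E(H)$, which is exactly your one-unit-at-a-time swap written in composed form. Your induction, together with the distinctness of the twin classes and the disjointness bookkeeping (via the fact that twin units cannot be unit-neighbours), simply makes explicit the well-definedness checks that the paper's one-line composition leaves implicit.
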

	\begin{proof}
		Since $W_{E_i^\prime}\in \mathfrak{C}(W_{E_i})$, $W_{E_i(H)}$ and $W_{E_i^\prime(H)}$ are twin units and there exists a canonical bijection $\mathfrak{f}_{E_iE_i^\prime}$. Therefore, $e^\prime =(\mathfrak{f}_{E_nE_n^\prime}\circ\ldots\circ\mathfrak{f}_{E_2E_2^\prime}\circ\mathfrak{f}_{E_1E_1^\prime})(e)\in E(H)$.
	\end{proof}
The above result leads us to the following definition.
	\begin{df} Let $H$ be a hypergraph. For all $e\in E(H)$,
		$\hat{\pi}(e)=\{\pi(v):v\in e\}$. The contraction of $H$ is a hypergraph, denoted by $\hat H$, is defined as $V(\hat H)=\mathfrak{C}(\mathfrak{U}(H))$ and $E(\hat H)=\{\hat{\pi}(e):e\in E(H)\}$.
	\end{df}
  For instance, consider the hypergraph $H$ illustrated in the \Cref{fig:enter-label}. Since the hypergraph contains only a pair of twin units $W_{E_3}$, and $W_{E_4}$. The contraction $\hat H$ is such that $V(\hat H)=\{\mathfrak{C}(W_{E_1}),\mathfrak{C}(W_{E_2}), \mathfrak{C}(W_{E_3})\}$, and $E(\hat H)=\{\hat{\pi}(e_1)=\hat{\pi}(e_2),\hat{\pi}(e_3)\}$.

Given any hypergraph $H$, for any $ x\in\mathbb{C}^{V(\hat H)}$, we define $ \Bar x\in \mathbb{C}^{V( H)}$ as $\Bar x(v)={x}(\pi(v))$. For any $ y\in\mathbb{C}^{V(\hat H)}$, we define $ \grave y\in \mathbb{C}^{V( H)}$ as $\grave y(v)=\frac{1}{|W_{E_v(H)}|}y(\mathfrak{C}(W_{E_v(H)})) $ for all $v\in V(H)$. For any $ \alpha \in \mathbb{C}^{E({H})} $, we define $ \hat\alpha \in \mathbb{C}^{E(\hat{H})} $ as $\hat{\alpha}(\hat \pi(e))=\sum\limits_{e\in \hat{\pi}^{-1}(\hat{\pi}(e))}\alpha(e)$.

 Before the next result, we recall that functions $\sigma_H:E(H)\to(0,\infty)$ and $ \rho_H:E(H)\to(0,\infty)$ are defined as $\sigma_H(e)=\frac{\delta_{E(H)}(e)}{|e|^2}$ and $\rho_H(e)=\frac{\delta_{E(H)}(e
		)}{|e|}$  respectively, where $\delta_{V(H)}$, and $\delta_{E(H)}$ are the positive valued function on $V(H)$, and $E(H)$, respectively, used to define the general operators associated with a hypergraph.
 
	   \begin{lem}\label{contrac-lap-lem}
Let $H$ be a hypergraph. If $\delta_{V(H)}=c_V{\bar\delta_{V({\hat H})}}$, ${\hat\sigma_H}=c_{_E}\sigma_{\hat H}$, $ |W_{E_0}|=c$ for all $ W_{E_0}\in \mathfrak{U}(H)$, and the cardinality, $|a|=k$ for all $a\in\mathfrak{C}(\mathfrak{U}(H)) $ then $(L_{(H,\delta_{V(H)},\delta_{E(H)})}\grave y)(v)= k\frac{c_{_E}}{c_V}(L_{(\hat{H},\delta_{V(\hat H)},\delta_{E(\hat H)})}y)(\pi(v))$ and $(Q_{(H,\delta_{V(H)},\delta_{E(H)})}\grave y)(v)=k\frac{c_{_E}}{c_V}(Q_{(\hat{H},\delta_{V(\hat H)},\delta_{E(\hat H)})}y)(\pi(v))$ for any $y\in\mathbb{C}^{V(\hat H)}$, and $v\in V(H)$.
\end{lem}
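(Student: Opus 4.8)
The plan is to verify both identities by a single direct computation, evaluating each side at an arbitrary vertex $v \in V(H)$ and writing $\bar v = \pi(v) = \mathfrak{C}(W_{E_v(H)})$. First I would record the two normalizations forced by the hypotheses: since every unit has size $c$, the blow-up simplifies to $\grave y(v) = \tfrac{1}{c}y(\bar v)$, and since $\delta_{V(H)} = c_V\bar\delta_{V(\hat H)}$ we have $\delta_{V(H)}(v) = c_V\,\delta_{V(\hat H)}(\bar v)$. Substituting these into the defining formula for $\lap$ and abbreviating $\sigma_H(e) = \delta_{E(H)}(e)/|e|^2$ turns $(\lap\grave y)(v)$ into $\frac{1}{c\,c_V\,\delta_{V(\hat H)}(\bar v)}\sum_{e\in E_v(H)}\sigma_H(e)\sum_{u\in e}\big(y(\bar v) - y(\pi(u))\big)$.

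The next step is to collapse the inner vertex-sum to the contracted hypergraph. By \Cref{edge-unit} each $e$ is a disjoint union of units, and by \Cref{twin_nbd} no two of these units are twins, so the classes $\pi(W)$ of the units $W \subseteq e$ are pairwise distinct and are exactly the vertices of $\hat\pi(e)$. Since each such unit has size $c$, the map $u \mapsto \pi(u)$ is $c$-to-one onto $\hat\pi(e)$, giving $\sum_{u\in e}\big(y(\bar v) - y(\pi(u))\big) = c\sum_{W'\in\hat\pi(e)}\big(y(\bar v) - y(W')\big)$; the factor $c$ then cancels the $1/c$ carried in from $\grave y$.

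The heart of the argument, and the step I expect to be the main obstacle, is the edge-level regrouping. I would partition $E_v(H)$ according to the image $\hat e = \hat\pi(e)$, which ranges exactly over $E_{\bar v}(\hat H)$, so that the expression becomes $\frac{1}{c_V\delta_{V(\hat H)}(\bar v)}\sum_{\hat e\in E_{\bar v}(\hat H)}\big(\sum_{e\in E_v(H),\,\hat\pi(e)=\hat e}\sigma_H(e)\big)\sum_{W'\in\hat e}\big(y(\bar v)-y(W')\big)$. Comparing this with the defining formula for $L_{\hat H}$, it remains to identify the inner weight-sum with $c_E\,\sigma_{\hat H}(\hat e)$, and this is precisely where the hypothesis $\hat\sigma_H = c_E\sigma_{\hat H}$ must enter. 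The delicate point is that the pushforward $\hat\sigma_H(\hat e) = \sum_{e\in\hat\pi^{-1}(\hat e)}\sigma_H(e)$ ranges over \emph{all} preimages of $\hat e$, whereas the sum arising above runs only over those preimages that actually contain $v$; reconciling these two collections requires the canonical bijections between twin generators (via \Cref{quotient-prop}) to match each preimage with a $\sigma_H$-equal edge through $v$, and the uniform-size assumption $|W_{E_0}|=c$ together with the constancy of $\delta_{V(H)}$ on units is exactly what makes this matching weight-preserving and consistent with the claimed scalar $c_E/c_V$. Once the inner sum is replaced by $c_E\,\sigma_{\hat H}(\hat e)$, the whole expression is exactly $\frac{c_E}{c_V}(L_{\hat H}y)(\bar v)$, proving the first identity.

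Finally, the signless-Laplacian identity is handled by the same three steps with $\lap$ replaced by $\q$: the only change is that the inner vertex-sum $\sum_{u\in e}\big(y(\bar v) - y(\pi(u))\big)$ is replaced by $\sum_{u\in e}y(\pi(u))$, which collapses in the same $c$-to-one fashion to $c\sum_{W'\in\hat e}y(W')$, and the edge regrouping together with the factor $\frac{c_E}{c_V}$ appears identically. Since no vertex-difference structure is involved, this case is routine once the Laplacian case is established, and the proof concludes by noting that $v\in V(H)$ and $y\in\mathbb{R}^{V(\hat H)}$ were arbitrary.
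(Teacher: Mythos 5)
Your first two steps are correct and coincide with the computation the paper itself performs: with every unit of size $c$ one has $\grave y(v)=\tfrac1c y(\pi(v))$, and by \Cref{edge-unit} together with \Cref{twin_nbd} the units inside an edge $e$ map injectively onto $\hat\pi(e)$, so $\sum_{u\in e}\bigl(\grave y(v)-\grave y(u)\bigr)=\sum_{W'\in\hat\pi(e)}\bigl(y(\pi(v))-y(W')\bigr)$. The genuine gap is exactly at the step you call the heart of the argument, and it cannot be repaired under the stated hypotheses. After grouping $E_v(H)$ into fibres of $\hat\pi$, the weight you must attach to $\hat e\in E_{\pi(v)}(\hat H)$ is the star-restricted sum $s_v(\hat e)=\sum_{e\in E_v(H),\,\hat\pi(e)=\hat e}\sigma_H(e)$, whereas the hypothesis controls only the full fibre sum $\hat\sigma_H(\hat e)=\sum_{e\in\hat\pi^{-1}(\hat e)}\sigma_H(e)$. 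Your proposed reconciliation via \Cref{quotient-prop} fails: each edge of the fibre contains exactly one unit from the class $\pi(v)$ (twins are never neighbours), and the canonical bijections collapse all edges whose unit is a twin of $W_{E_v(H)}$ onto edges through $W_{E_v(H)}$ itself; this map from the whole fibre onto the edges through $v$ is many-to-one, so it is not weight-preserving, and $\hat\sigma_H(\hat e)$ genuinely exceeds $s_v(\hat e)$ (by exactly the factor $|\pi(v)|$ when the canonical bijections preserve $\sigma_H$).

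No argument can close this gap, because the identity you need---and in fact the lemma itself---is false. Take $H=C_4$ with vertices $1,2,3,4$ and edges $\{1,2\},\{2,3\},\{3,4\},\{4,1\}$: every unit is a singleton (so $c=1$), $\{1\},\{3\}$ are twins and $\{2\},\{4\}$ are twins, hence $\hat H=K_2$ with $V(\hat H)=\{a,b\}$, $a=\pi(1)=\pi(3)$, $b=\pi(2)=\pi(4)$, and the single edge $\hat e=\{a,b\}$ satisfies $\hat\pi^{-1}(\hat e)=E(H)$. Choose $\delta_{V(H)}\equiv1$, $\delta_{V(\hat H)}\equiv1$ (so $c_V=1$), $\delta_{E(H)}\equiv4$ (so $\sigma_H\equiv1$), and $\delta_{E(\hat H)}(\hat e)=4$ (so $\sigma_{\hat H}(\hat e)=1$ and $\hat\sigma_H=c_{_E}\sigma_{\hat H}$ holds with $c_{_E}=4$). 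All hypotheses of the lemma hold, yet $(\q\grave y)(1)=2\bigl(y(a)+y(b)\bigr)$ while $\frac{c_{_E}}{c_V}(Q_{\hat H}y)(a)=4\bigl(y(a)+y(b)\bigr)$, and $(\lap\grave y)(1)=2\bigl(y(a)-y(b)\bigr)$ while $\frac{c_{_E}}{c_V}(L_{\hat H}y)(a)=4\bigl(y(a)-y(b)\bigr)$: the star of vertex $1$ meets only two of the four edges in the fibre. (The discrepancy propagates to \Cref{lap_contract}: here $8$ would have to be a Laplacian eigenvalue of $C_4$, but the spectrum is $\{0,2,2,4\}$.) For what it is worth, the paper's own proof makes precisely the substitution you flagged---its second displayed line silently replaces $s_v(\hat\pi(e))$ by $\hat\sigma_H(\hat\pi(e))$---so your write-up mirrors the paper's argument; the difference is that you exposed the illegitimate step rather than eliding it. A correct version of the lemma must hypothesize control of the star-restricted sums themselves, e.g.\ $s_v(\hat e)=c_{_E}\,\sigma_{\hat H}(\hat e)$ for every $v$ and every $\hat e\in E_{\pi(v)}(\hat H)$, which is strictly stronger than $\hat\sigma_H=c_{_E}\sigma_{\hat H}$ whenever $\hat\pi$ fails to be injective.
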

\begin{proof}
    For all $y\in\mathbb{C}^{V(\hat H)}$, and $v\in V(H)$ we have 
    \begin{align*}
        (L_{(H,\delta_{V(H)},\delta_{E(H)})}\grave y)(v)&=\sum\limits_{e\in E_v(H)}\frac{\sigma_H(e)}{\delta_{V(H)}(v)}\sum\limits_{u\in e}(\grave y(v)-\grave y(u))\\
        &=\sum\limits_{\hat\pi(e)\in E_{\pi(v)(H)}}\frac{\hat\sigma_H(\hat\pi(e))}{c_V\bar\delta_{V(\hat H)}(v)}k\sum\limits_{\pi(u)\in \hat \pi(e)}( y(\pi(v))- y(\pi(u)))\\
         &=\sum\limits_{\hat\pi(e)\in E_{\pi(v)(H)}}\frac{c_{_E}\sigma_{\hat H}(\hat\pi(e))}{c_V\bar\delta_{V(\hat H)}(v)}k\sum\limits_{\pi(u)\in \hat \pi(e)}( y(\pi(v))- y(\pi(u)))\\
         &=k\frac{c_{_E}}{c_V}(L_{(\hat{H},\delta_{V(\hat H)},\delta_{E(\hat H)})}y)(\pi(v))
    \end{align*}
    Similarly, we have $(Q_{(H,\delta_{V(H)},\delta_{E(H)})}\grave y)(v)=k\frac{c_{_E}}{c_V}(Q_{(\hat{H},\delta_{V(\hat H)},\delta_{E(\hat H)})}y)(\pi(v))$.
\end{proof}
Therefore, we have the following result by \Cref{contrac-lap-lem}.
\begin{thm}\label{lap_contract}
Let $H$ be a hypergraph. If $\delta_{V(H)}=c_V{\bar\delta_{V({\hat H})}}$, ${\hat\sigma_H}=c_{_E}\sigma_{\hat H}$, $ |W_{E_0}|=c$ for all $ W_{E_0}\in \mathfrak{U}(H)$ , and the cardinality, $|a|=k$ for all $a\in\mathfrak{C}(\mathfrak{U}(H)) $, then for each eigenvalue $\lambda_{L_{\hat H}}$ of $L_{(\hat{H},\delta_{V(\hat H)},\delta_{E(\hat H)})}$ with eigenvector $y$ we have $k \frac{cc_{_E}}{c_V}\lambda_{L_{\Bar{H}}}$ is an eigenvalue of $L_{(H,\delta_{V(H)},\delta_{E(H)})}$ with eigenvector $\grave y$, and for each eigenvalue $\lambda_{Q_{\hat H}}$ of $Q_{(\hat{H},\delta_{V(\hat H)},\delta_{E(\hat H)})}$ with eigenvector $z$ we have $k \frac{cc_{_E}}{c_V}\lambda_{Q_{\Bar{H}}}$ is an eigenvalue of $Q_{(H,\delta_{V(H)},\delta_{E(H)})}$ with eigenvector $\grave z$. \end{thm}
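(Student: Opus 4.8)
The plan is to read the theorem off directly from \Cref{contrac-lap-lem}, treating it essentially as a corollary: the lemma already records the crucial intertwining identity $(\lap\grave y)(v)=\tfrac{c_{_E}}{c_V}(L_{\hat H}y)(\pi(v))$ together with its $\q$-analogue, so all that remains is to feed in the eigenvalue equation for $y$ and keep careful track of the uniform unit-size constant $c$.

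First I would fix an eigenvalue $\lambda_{L_{\hat H}}$ of $L_{\hat H}$ with eigenvector $y$, so that $L_{\hat H}y=\lambda_{L_{\hat H}}y$. Applying \Cref{contrac-lap-lem} and then substituting the eigenvalue equation gives, for every $v\in V(H)$,
\begin{equation*}
(\lap\grave y)(v)=\frac{c_{_E}}{c_V}(L_{\hat H}y)(\pi(v))=\frac{c_{_E}}{c_V}\lambda_{L_{\hat H}}\,y(\pi(v)).
\end{equation*}
The single computation to carry out is to convert $y(\pi(v))$ back into $\grave y(v)$: by definition $\grave y(v)=\tfrac{1}{|W_{E_v(H)}|}y(\pi(v))$, and the hypothesis $|W_{E_0}|=c$ for every unit forces $|W_{E_v(H)}|=c$, whence $y(\pi(v))=c\,\grave y(v)$. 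Substituting yields $(\lap\grave y)(v)=\tfrac{cc_{_E}}{c_V}\lambda_{L_{\hat H}}\grave y(v)$ for all $v$, i.e. $\lap\grave y=\tfrac{cc_{_E}}{c_V}\lambda_{L_{\hat H}}\grave y$, which is exactly the claimed eigenpair. The identical argument, now invoking the $\q$-part of \Cref{contrac-lap-lem}, delivers the assertion for $\q$ and $Q_{\hat H}$.

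The only point requiring a word of care --- and the closest thing to an obstacle --- is to confirm that $\grave y$ is a genuine nonzero eigenvector whenever $y\neq0$. Since $\pi$ is onto, any twin-unit class $C$ with $y(C)\neq0$ is hit by some $v$, and then $\grave y(v)=\tfrac1c y(C)\neq0$, so $\grave y\neq0$. Beyond this bookkeeping there is no real difficulty, because the substantive content --- the commutation of $\lap$ and $\q$ with the contraction/blow-up maps under the scaling hypotheses $\delta_{V(H)}=c_V\bar\delta_{V(\hat H)}$ and $\hat\sigma_H=c_{_E}\sigma_{\hat H}$ --- has already been established in \Cref{contrac-lap-lem}.
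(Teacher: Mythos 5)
Your proposal is correct and follows essentially the same route as the paper's own proof: both invoke \Cref{contrac-lap-lem}, substitute the eigenvalue equation for $y$, and use the uniform unit size $|W_{E_v(H)}|=c$ to rewrite $y(\pi(v))$ as $c\,\grave y(v)$, with the $Q_{(H,\delta_{V(H)},\delta_{E(H)})}$ case handled identically. Your extra verification that $\grave y\neq 0$ is a small refinement the paper leaves implicit.
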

\begin{proof}
    By \Cref{contrac-lap-lem}, we have  $$(L_{(H,\delta_{V(H)},\delta_{E(H)})}\grave y)(v)= k\frac{c_{_E}}{c_V}(L_{\Hat{H}}y)(\pi(v))$$ for any $y\in\mathbb{C}^{V(\hat H)}$, and $v\in V(H)$.
    Since $ \lambda_{L_{\hat H}}$ is an eigenvalue with eigenvector $y$ and $$\grave y(v)=\frac{1}{|W_{E_v(H)}|}y(\mathfrak{C}(W_{E_v(H)}))=\frac{1}{c}y(\mathfrak{C}(W_{E_v(H)})) $$
    
    for all $v\in V(H)$, we have %$(L_{(H,\delta_{V(H)},\delta_{E(H)})}\grave y)(v)= \frac{c_{_E}}{c_V}(L_{\Hat{H}}y)(\pi(v))=\frac{c_{_E}}{c_V} \lambda_{L_{\hat H}}y(\pi(v))=\frac{cc_{_E}}{c_V} \lambda_{L_{\hat H}}\grave y(v)$   
   \begin{align*}
       (L_{(H,\delta_{V(H)},\delta_{E(H)})}\grave y)(v)&=k \frac{c_{_E}}{c_V}(L_{\Hat{H}}y)(\pi(v))\\&=\frac{c_{_E}}{c_V} \lambda_{L_{\hat H}}y(\pi(v))\\&=k\frac{cc_{_E}}{c_V} \lambda_{L_{\hat H}}\grave y(v)
   \end{align*}   
    for any $y\in\mathbb{C}^{V(\hat H)}$, and $v\in V(H)$.
    The proof for the part of $Q_{(H,\delta_{V(H)},\delta_{E(H)})}$ is similar to that of $L_{(H,\delta_{V(H)},\delta_{E(H)})}$ and hence is omitted.
\end{proof}
Let $ \mathfrak{C}(\mathfrak{U}(H))=\{a_1,\ldots,a_k\}$ for a hypergraph $H$. Suppose that $\delta_{V(H)}(v)=c_p$ for all $v\in \pi^{-1}(a_p)$, all canonical bijections are $\sigma_H$ preserving, and $|W_{E_i}|=m_p$ for all $W_{E_i}\in a_p$  for all $p=1,\ldots,k$. We define $C_H=\left(c_{pq}\right)_{a_p,a_q\in \mathfrak{C}(\mathfrak{U}(H))}$ as $c_{pq}=\frac{1}{c_q}| \pi^{-1}(a_p)|\sum\limits_{E_i\cap E_j}\sigma_H(e)$, where $a_p=\mathfrak{C}(W_{E_i}),a_q=\mathfrak{C}(W_{E_j})$ and $p\ne q$ and $c_{pp}=(m_p-1)\sum\limits_{v\in E_i}\sigma_H(e)$.
\begin{thm}\label{twin-eqp-eig}
Let $ \mathfrak{C}(\mathfrak{U}(H))=\{a_1,\ldots,a_k\}$ for a hypergraph $H$. If $\delta_{V(H)}(v)=c_p$ for all $v\in \pi^{-1}(a_p)$, all canonical bijections are $\sigma_H$ preserving, and $|W_{E_i}|=m_p$ for all $W_{E_i}\in a_p$  for all $p=1,\ldots,k$ then for each eigenvalue $ \lambda$ of $C_H$ with eigenvector $y$ we have $\lambda$ is an eigenvalue of $A_{(H,\delta_{V(H)},\delta_{E(H)})}$ with eigenvector $ \Tilde{y}$ defined by $\Tilde{y}(v)=\frac{y(a_p)}{c_p}$, for all $v\in \pi^{-1}(a_p)$, $p=1,\ldots,k$.
\end{thm}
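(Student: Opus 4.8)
The plan is to verify directly that $\adj \Tilde{y} = \lambda \Tilde{y}$, by evaluating $(\adj \Tilde{y})(v)$ at a vertex $v \in \pi^{-1}(a_p)$ and showing it equals $\lambda\, \Tilde{y}(v) = \lambda\, y(a_p)/c_p$, using the eigenvalue relation $\sum_{q} c_{pq}\, y(a_q) = \lambda\, y(a_p)$ for $C_H$. First I would fix $v \in \pi^{-1}(a_p)$ and let $W_{E_i}$ be the unit containing $v$, so that $\mathfrak{C}(W_{E_i}) = a_p$ and $E_v(H) = E_i$. Since $\delta_{V(H)}(v) = c_p$ and $\sigma_H(e) = \delta_{E(H)}(e)/|e|^2$, the adjacency action reads $(\adj \Tilde{y})(v) = \frac{1}{c_p}\sum_{e \in E_i}\sigma_H(e)\sum_{u \in e,\, u \neq v}\Tilde{y}(u)$.

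The next step is to reorganize the inner sum according to the units, hence the twin-classes, of the vertices $u$. By \Cref{edge-unit}, every $e \in E_i$ is a disjoint union of units, and $\Tilde{y}$ is constant on each fibre $\pi^{-1}(a_q)$ with value $y(a_q)/c_q$. Grouping $u$ by $\pi(u)$, I would write $\sum_{u \in e,\, u\neq v}\Tilde{y}(u) = \sum_{q} \frac{y(a_q)}{c_q}\,N_q(e,v)$, where $N_q(e,v)$ counts the vertices of $e\setminus\{v\}$ lying in $\pi^{-1}(a_q)$. The decisive simplification of the diagonal term $q=p$ comes from \Cref{twin_nbd}: distinct twin units are never unit-neighbours, so $W_{E_i}$ is the only unit of class $a_p$ contained in $e$; as the units in $a_p$ are equipotent of size $m_p$, this forces $N_p(e,v) = m_p - 1$ independently of $e$ and of $v$.

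Then I would assemble the full sum and match it against $C_H$. Summing over $e \in E_i$, the $q=p$ contribution becomes $\frac{y(a_p)}{c_p}(m_p-1)\sum_{e\in E_i}\sigma_H(e)$, the diagonal entry $c_{pp}$ (up to the normalisation by $c_p$ built into $\Tilde{y}$). For $q \neq p$, a double-count over pairs $(e, W_F)$ with $e\in E_i$ and $W_F\subseteq e$ a unit of class $a_q$ turns $\sum_{e\in E_i}\sigma_H(e)N_q(e,v)$ into $m_q\sum_{W_F\in a_q}\sum_{e\in E_i\cap F}\sigma_H(e)$ ($F$ the generator of $W_F$), which I would identify with the off-diagonal entry $c_{pq}$. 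This is where the remaining hypotheses do their work: because all canonical bijections are $\sigma_H$-preserving and the units in each class are equipotent, the inner sum $\sum_{e\in E_i\cap F}\sigma_H(e)$ is independent of the chosen representatives $W_{E_i}\in a_p$ and $W_F\in a_q$ (apply the canonical bijection $\mathfrak{f}_{E_iE_i'}$, which only swaps the $a_p$-part of each edge and preserves $\sigma_H$), so the count is genuinely a function of the pair $(a_p,a_q)$ alone. Equivalently, these facts show that the subspace of $\pi$-fibre-constant functions is $\adj$-invariant and that $C_H$ is, up to the diagonal conjugation $y\mapsto (y(a_p)/c_p)_p$, the matrix of $\adj$ restricted to it, whence eigenvalues transfer and $y$ blows up to $\Tilde{y}$.

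The main obstacle I anticipate is precisely this well-definedness step: one must check that every sum entering $C_H$ is invariant under replacing a unit by a twin, and that replacing $v$ by another vertex of $\pi^{-1}(a_p)$ leaves $(\adj\Tilde{y})(v)$ unchanged, i.e. that $\adj\Tilde{y}$ is itself $\pi$-fibre-constant. Both reduce to the $\sigma_H$-preservation of the canonical bijections together with \Cref{twin_nbd} and the equipotence of units within a class; once these are in place the bookkeeping matching $c_{pp}$ and $c_{pq}$ is routine, and the eigenvalue equation $C_H y = \lambda y$ transports directly to $\adj\Tilde{y} = \lambda\Tilde{y}$.
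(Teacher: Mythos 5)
Your proposal is correct and takes essentially the same route as the paper's own proof: expand $(A_{(H,\delta_{V(H)},\delta_{E(H)})}\Tilde{y})(v)$ as a double sum, regroup the contributions by twin classes $a_q$, identify the resulting coefficients with the entries of $C_H$ (using \Cref{twin_nbd} for the diagonal term and the $\sigma_H$-preserving canonical bijections for well-definedness), and transport the eigenvalue equation $C_Hy=\lambda y$. You additionally make explicit the checks the paper leaves implicit, and your off-diagonal bookkeeping yields $\frac{1}{c_q}\,|\pi^{-1}(a_q)|\sum_{e\in E_i\cap F}\sigma_H(e)$, which is what the computation genuinely produces — the factor $|\pi^{-1}(a_p)|$ in the paper's printed definition of $c_{pq}$ is an index typo that your version silently corrects.
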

\begin{proof}
For all $v\in \pi^{-1}(a_p)$, 
\begin{align*}
	(A_{(H,\delta_{V(H)},\delta_{E(H)})}\Tilde{y})(v)%&=\frac{1}{\delta_{V(H)}}\sum\limits_{e\in E_v(H)}\sigma_H(e)\sum\limits_{u(\ne v)\in e}\Tilde{y}(u)\\
	&=\frac{1}{\delta_{V(H)}(v)}\sum\limits_{u(\ne)v\in V(H)}\Tilde{y}(u)\sum\limits_{e\in E_{uv}(H)}\sigma_H(e)\\
	&=\frac{1}{\delta_{V(H)}(v)}\sum\limits_{a_q\in\mathfrak{C}(\mathfrak{U}(H))}y(a_q)c_{pq}=\frac{1}{\delta_{V(H)}(v)}((C_H)y)(a_p)\\
	&=\frac{\lambda}{\delta_{V(H)}(v)} y(a_p)=\lambda\Tilde{y}(v).
\end{align*}
Thus, the Theorem follows.
\end{proof}
 The matrices $Q_{(B,H)}$ is a special cases of $\q$ with $\delta_{V(H)}(v)=v$ for all $v\in V(H)$, and $\delta_{E(H)}=\frac{|e|^2}{|e|-1}$ for all $e\in E(H)$. Consider the hypergraph $H$ with $V(H)=\{1,2,3,4,5,6,7,8\}$, and $E(H)=\{e_1=\{1,2,3,4\},e_2=\{1,2,7,8\},e_3=\{3,4,5,6\},e_4=\{5,6,7,8\}\}$. The units in $H$ are $W_{E_1}=\{1,2\}$, $W_{E_2}=\{5,6\}$, $W_{E_3}=\{3,4\}$, and $W_{E_4}=\{7,8\}$. For this hypergraph $H$, we have $ Q_{(B,H)}=\frac{1}{3}\left[\begin{smallmatrix}
     2 & 2 & 1 & 1 & 0 & 0 & 1 & 1\\ 2 & 2 & 1 & 1 & 0 & 0 & 1 & 1\\ 1 & 1 & 2 & 2 & 1 & 1 & 0 & 0\\ 1 & 1 & 2 & 2 & 1 & 1 & 0 & 0\\ 0 & 0 & 1 & 1 & 2 & 2 & 1 & 1\\ 0 & 0 & 1 & 1 & 2 & 2 & 1 & 1\\ 1 & 1 & 0 & 0 & 1 & 1 & 2 & 2\\ 1 & 1 & 0 & 0 & 1 & 1 & 2 & 2 
 \end{smallmatrix}\right]$. By the \Cref{cor_unit_eig}, $Q_{(B,\hat H)}$ has the eigenvalue $0$ with multiplicity at least $4$ for the units. By the \Cref{twin-units-adj}, this matrix has an eigenvalue $\frac{4}{3}$ of multiplicity $2$ due to the twin units in $H$. Since here $ Q_{(B,\hat H)}=\left[\begin{smallmatrix}
     1&1\\1&1
 \end{smallmatrix}\right]$, and the eigenvalue of $Q_{(B,\hat H)}$ are $0$, and $2$. Here $k=2$, $c=2$, and $ c_E=\frac{1}{3}$. Thus, for this hypergraph, and this matrix $k\frac{cc_E}{c_V}=\frac{4}{3}$, the matrix $Q_{(B,H)}$ has the eigenvalues $0$, and $\frac{8}{3}$. Therefore, the complete list of eigenvalues of $Q_{(B,H)}$ are $0$ with multiplicity $5$, $\frac{4}{3}$ with multiplicity $2$, and $\frac{8}{3}$ with multiplicity $1$. 
 
 For this hypergraph $H$, and the same choices of $\delta_{V(H)}$, and $\delta_{E(H)}$ the Laplacian matrix $L_{(B,H)}=\frac{1}{3}\left[\begin{smallmatrix}
   \phantom{-} 6 & -2 & -1 & -1 &   \phantom{-}0 &   \phantom{-}0 & -1 & -1\\ -2 &   \phantom{-}6 & -1 & -1 &   \phantom{-}0 &   \phantom{-}0 & -1 & -1\\ -1 & -1 &   \phantom{-}6 & -2 & -1 & -1 &   \phantom{-}0 &   \phantom{-}0\\ -1 & -1 & -2 &   \phantom{-}6 & -1 & -1 &   \phantom{-}0 &   \phantom{-}0\\  \phantom{-} 0 &   \phantom{-}0 & -1 & -1 &   \phantom{-}6 & -2 & -1 & -1\\   \phantom{-}0 &   \phantom{-}0 & -1 & -1 & -2 &   \phantom{-}6 & -1 & -1\\ -1 & -1 &   \phantom{-}0 &   \phantom{-}0 & -1 & -1 &   \phantom{-}6 & -2\\ -1 & -1 &   \phantom{-}0 &   \phantom{-}0 & -1 & -1 & -2 &   \phantom{-}6
 \end{smallmatrix}\right]$. Here, $ L_{(B,\hat H)}=\left[\begin{smallmatrix}
   \phantom{-}  1&-1\\-1&\phantom{-}1
 \end{smallmatrix}\right]$, and the eigenvalues of this matrix are $0$, and $2$. Since $k\frac{cc_E}{c_V}=\frac{4}{3}$, by the \Cref{lap_contract}, two eigenvalues of $L_{(B,H)}$ are $0$, and $\frac{8}{3}$. For the same hypergraph, and the same choices of $\delta_{V(H)}$, and $\delta_{E(H)}$, the adjacency matrix is $A_{(B,H)}=\frac{1}{3}\left[
 \begin{smallmatrix}
     0 & 2 & 1 & 1 & 0 & 0 & 1 & 1\\ 2 & 0 & 1 & 1 & 0 & 0 & 1 & 1\\ 1 & 1 & 0 & 2 & 1 & 1 & 0 & 0\\ 1 & 1 & 2 & 0 & 1 & 1 & 0 & 0\\ 0 & 0 & 1 & 1 & 0 & 2 & 1 & 1\\ 0 & 0 & 1 & 1 & 2 & 0 & 1 & 1\\ 1 & 1 & 0 & 0 & 1 & 1 & 0 & 2\\ 1 & 1 & 0 & 0 & 1 & 1 & 2 & 0 
 \end{smallmatrix}
 \right]$. For this matrix, the matrix $C_H=\frac{2}{3}\left[\begin{smallmatrix}
     1&2\\2&1
 \end{smallmatrix}\right]$. Since $2$, and $-\frac{2}{3}$ are two eigenvalues of $C_H$, by the \Cref{twin-eqp-eig}, $2$, and $-\frac{2}{3}$ are also two eigenvalues of $A_{(B,H)}$.
 In the next section, we show more application of the results presented in this section while computing the spectra of hyperflowers.
\subsection{Complete spectra of hyperflowers}
 % In this section, we use our results in the spectral study of some classes of hypergraphs.
 % \subsection{Spectra of hyperflowers}
 \label{sec-hypflow}
Here, we compute spectra of a class of hypergraphs, \textit{$(l,r)$-hyperflower with $t$-twins and $m$-homogeneous centre}, using our previous results.
% A $(l,r)$-hyperflower with $t$-twins and $m$-homogeneous centre is a hypergraph $H$ where $V(H)$ can be expressed as the disjoint partition $V(H)=U\cup W$. The set $U$ can be partitioned into disjoint $t$-element sets as $U=\bigcup\limits_{i=1}^l U_i$. That is $|U_i|=t$, $U_i=\{u_{is}\}_{s=1}^t$ for all $i=1,2\ldots,l$ and $U_i\cap U_j=\emptyset$ for all $i\neq j$ and $i,j=1,\ldots,l$. We refer to each $U_i$ as a peripheral component and $U$ as the periphery of the hyperflower.
% There exists $r$-disjoint set of vertices $e_1,\ldots,e_r\in \mathcal{P}(W)$, the power set of $W$, such that, $|e_k|=m$ for all $k=1,2,\ldots,r$, and
% $E=\{e_{ki}:e_{ki}=e_k\cup U_i,k=1,2,\ldots,r;i=1,2,\ldots,l\}$ and $W=\bigcup\limits_{i=1}^r e_r$. Each $e_k$ is called a central component, and $W$ is called the centre of the hyperflower.
An $(l,r)$-hyperflower with $t$-twins and $m$-homogeneous centre is a hypergraph $H$ where $V(H)$ can be expressed as the disjoint partition $V(H)=U\cup W$ with $U\cap W=\emptyset$. Both the set $U$, and $W$ are partitioned as $U=\bigcup\limits_{i=1}^l U_i$ with $|U_i|=t$, $U_i=\{u_{is}\}_{s=1}^t$, $U_i\cap U_j=\emptyset$ for all $i\neq j$ for all $i,j=1,2\ldots,l$, and  $W=\bigcup\limits_{p=1}^r e_p$ with $|e_p|=m$, and $e_p\cap e_q=\emptyset$ for all $p(\ne q)$, $p,q=1,2,\ldots,r$. We refer to each $U_i$ as a peripheral component and $U$ as the periphery of the hyperflower. Each $e_k$ is called a central component, and $W$ is called the centre of the hyperflower. The hyperedge set is $E(H)=\{e_{ki}:e_{ki}=e_k\cup U_i,k=1,2,\ldots,r;i=1,2,\ldots,l\}$.

%	If $v\in U$, then $v$ is called a peripheral vertex and each component $U_i$ of $U$ is called a peripheral component of $H$, the set $U$ is called the periphery of $H$. We refer $W$ as the center of the hyperflower and any $e_i\in \mathcal{P}(W)$ as the $i$th layer of the center. Moreover, we say the center of the hyperflower is $m$-\textit{homogeneous} if $|e_j|=m$ for all $j\in [r]$ for some constant $m$. In that case, suppose that $e_j=\{v_{js}:{s\in [m]}\}$. Note that for all $v\in U_i$, one has $E_v(H)=E_{U_i}:=\{e_{ki}:k\in [r]\}$ for all $i\in [l]$. Similarly, for any $v\in e_j$, we have $E_v(H)=E_{e_j}:=\{e_{ki}:i\in [l]\}$.  

%	\begin{exm}	\label{ex:hyperflower}Consider the hypergraph $H$ with $V(H):=[16]$ and	\begin{align*}		E(H):=&\{\{1,2,5,6,7,8\},\{1,2,9,10,11,12\},\{1,2,13,14,15,16\},\\		\end{align*}		Evidently, $H$ is a $(3,2)$-hyperflower with $4$ twins and $H$ has a $m$-homogeneous center. The peripheral components $\{U_i\}_{i=1}^3$ and layers of center $\{e_j\}_{j=1}^2$ are given below  		\begin{align*}		& U_1=\{5,6,7,8\}, U_2=\{9,10,11,12\}, U_3=\{13,14,15,16\};  e_1=\{1,2\}, e_2=\{3,4\}.\end{align*}See \Cref{fig:32sunflower}.	\end{exm}
\subsection*{Hyperflower to spectra}
Any $(l,r)$-hyperflower contains only two $\mathfrak{R}_{tw}$-equivalence classes. One of them contains $l$ peripheral components, and the other one contains $r$ central components. Each one of the peripheral and central components is a unit. Therefore, by using \Cref{cor_unit_eig}, \Cref{com-unil-l}, \Cref{com-unil-a}, \Cref{adj_twin_unit}, and \Cref{lap_twin_unit}, we have the following result.

\begin{thm}\label{list-hyp-eig}  Let $H$ be a $(l,r)$-hyperflower with $t$-twins and $m$-homogeneous center with periphery $U$, center $W$, peripheral components $U_i$, for $i=1,\ldots,l$ and layers of the center $e_j$, for $j=1,\ldots,r$.
\begin{enumerate}[leftmargin=*]
	\item For each $U_i$, one eigenvalue of $Q_{(H,\delta_{V(H)},\delta_{E(H)})}$ is $0$ with multiplicity $|U_i|-1$ and if $\delta_{V(H)}(v)=c_a^i$ for all $v\in U_i $,
	then $-\frac{1}{c_a^i}\sum\limits_{e\in E_{U_i} }\frac{\delta_{E(H)}(e)}{|e|^2}$ and $\frac{1}{c_a^i}\sum\limits_{e\in E(H)_{U_i} }\frac{\delta_{E(H)}(e)}{|e|}$ are eigenvalues of $A_{(H,\delta_{V(H)},\delta_{E(H)})}$ and $L_{(H,\delta_{V(H)},\delta_{E(H)})}$ respectively, with multiplicity at least $|U_i|-1$ for all $i=1,2,\ldots,l$. 
	\item For each $e_j$, one eigenvalue of $Q_{(H,\delta_{V(H)},\delta_{E(H)})}$ is $0$ with multiplicity $|e_j|-1$ and if $\delta_{V(H)}(v)=c_b^j$ for each $v\in e_j $,
	then  $-\frac{1}{c_b^j}\sum\limits_{e\in E_{e_j} }\frac{\delta_{E(H)}(e)}{|e|^2}$ and $\frac{1}{c_b^j}\sum\limits_{e\in E(H)_{e_j} }\frac{\delta_{E(H)}(e)}{|e|}$ are eigenvalues of $A_{(H,\delta_{V(H)},\delta_{E(H)})}$ and $L_{(H,\delta_{V(H)},\delta_{E(H)})}$ respectively, with multiplicity at least $|e_j|-1$ for all $e_j$, $j\in [r]$. 
	\item $\frac{w}{c_a}r(t-1)$, $\frac{w}{c_a}rm$, and  $\frac{w}{c_a}rt$ are eigenvalues of $A_{(H,\delta_{V(H)},\delta_{E(H)})}$, $L_{(H,\delta_{V(H)},\delta_{E(H)})}$, and $Q_{(H,\delta_{V(H)},\delta_{E(H)})}$ respectively, with multiplicity at least $l-1$.
	\item $\frac{w}{c_b}l(m-1)$, $\frac{w}{c_b}lt$, and $\frac{w}{c_b}lm$ are eigenvalues of $A_{(H,\delta_{V(H)},\delta_{E(H)})}$, $L_{(H,\delta_{V(H)},\delta_{E(H)})}$ and $Q_{(H,\delta_{V(H)},\delta_{E(H)})}$ respectively, with multiplicity at least $r-1$.
\end{enumerate}
\end{thm}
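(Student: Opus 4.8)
The plan is to reduce the whole statement to the unit and twin-unit machinery already in place, since an $(l,r)$-hyperflower is by construction assembled from exactly these two kinds of symmetric pieces. First I would identify the units. For any $v\in U_i$ we have $E_v(H)=\{e_{ki}:k=1,\dots,r\}$, which does not depend on the chosen vertex of $U_i$; hence each peripheral component $U_i$ is a unit whose generating set $E_{U_i}(H)=\{e_{ki}:k\in[r]\}$ has size $r$. Dually, for $v\in e_j$ we have $E_v(H)=\{e_{ji}:i=1,\dots,l\}$, so each central component $e_j$ is a unit with generating set of size $l$. The disjointness of the $U_i$ (and of the $e_k$) guarantees these stars are genuinely distinct, so $\mathfrak U(H)=\{U_1,\dots,U_l,e_1,\dots,e_r\}$ is exactly the partition of $V(H)$ into units.

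With the units identified, parts (1) and (2) are direct specializations of \Cref{cor_unit_eig}, \Cref{com-unil-l}, and \Cref{com-unil-a}. Applying part (1) of \Cref{cor_unit_eig} to the unit $U_i$ (on which $\delta_{V(H)}\equiv c_a^i$) yields the eigenvalue $0$ of $Q_{(H,\delta_{V(H)},\delta_{E(H)})}$ with multiplicity $|U_i|-1$; part (1) of \Cref{com-unil-a} yields $-\frac{1}{c_a^i}\sum_{e\in E_{U_i}}\frac{\delta_{E(H)}(e)}{|e|^2}$ for $A$; and part (1) of \Cref{com-unil-l} yields $\frac{1}{c_a^i}\sum_{e\in E_{U_i}}\frac{\delta_{E(H)}(e)}{|e|}$ for $L$, each with multiplicity at least $|U_i|-1$ and eigenspace containing $S_{U_i}$. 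Running the same three results on the unit $e_j$ (on which $\delta_{V(H)}\equiv c_b^j$) produces part (2) verbatim.

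For parts (3) and (4) I would first record the twin structure. The peripheral units are pairwise twins, since $e_{ki}\mapsto(e_{ki}\setminus U_i)\cup U_j=e_{kj}$ is a canonical bijection $E_{U_i}(H)\to E_{U_j}(H)$; symmetrically $e_{ki}\mapsto(e_{ki}\setminus e_k)\cup e_{k'}=e_{k'i}$ makes the central units pairwise twins. Thus $\mathfrak C(\mathfrak U(H))$ consists of precisely the two classes $\{U_1,\dots,U_l\}$ and $\{e_1,\dots,e_r\}$. Under the uniformity hypotheses ($\delta_{V(H)}$ constant and $\sigma_H\equiv w$ constant across each class, which is forced here because every hyperedge lies in both $E_a=\bigcup_i E_{U_i}$ and $E_b=\bigcup_j E_{e_j}$) the formulas of \Cref{adj_twin_unit} and \Cref{lap_twin_unit} apply. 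For the peripheral class the unit cardinality is $m_a=t$, the generator size is $s_a=r$, and $\sum_{e\in E_{U_i}}|e\setminus U_i|=rm$ because $e_{ki}\setminus U_i=e_k$ with $|e_k|=m$; substituting gives the $A$-, $Q$-, and $L$-eigenvalues $\frac{w}{c_a}r(t-1)$, $\frac{w}{c_a}rt$, and $\frac{w}{c_a}rm$, each of multiplicity at least $l-1$, which is part (3). For the central class $m_b=m$, $s_b=l$, and $\sum_{e\in E_{e_j}}|e\setminus e_j|=lt$, giving part (4).

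There is no real obstacle here beyond careful bookkeeping; the only points that demand attention are (i) checking that each $U_i$ and each $e_j$ is a genuine, maximal unit, which rests on the disjointness assumptions built into the definition of a hyperflower, and (ii) matching the abstract parameters $m_a,s_a$ and the quantity $\sum_{e}|e\setminus W_E|$ appearing in \Cref{adj_twin_unit} and \Cref{lap_twin_unit} to the concrete counts $t,r,m,l$. A dimension count shows parts (1)--(4) account for $l(t-1)+r(m-1)+(l-1)+(r-1)=lt+rm-2$ eigenvalues out of $|V(H)|=lt+rm$; the two remaining eigenvalues of each operator come from the two-dimensional contraction on the two $\mathfrak R_{tw}$-classes, supplied by \Cref{twin-eqp-eig} together with its $L,Q$ analogue via \Cref{lap_contract}.
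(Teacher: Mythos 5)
Your proposal is correct and takes essentially the same route as the paper, which derives this theorem in a single sentence from \Cref{cor_unit_eig}, \Cref{com-unil-l}, \Cref{com-unil-a}, \Cref{lap_twin_unit}, and \Cref{adj_twin_unit} after observing that the peripheral and central components are exactly the units and form the two twin classes---precisely your argument. Your explicit verification of the stars, the canonical bijections, and the parameter matching ($m_a=t$, $s_a=r$, $\sum_{e\in E_{U_i}}|e\setminus U_i|=rm$, and dually for the center) just fills in bookkeeping the paper leaves implicit.
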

\begin{proof}
   Both $U_i$, and $e_j$ are unit for $i=1,\ldots,l$, and $j=1,\ldots,r$. Consequently, the conditions $\delta_{V(H)}(v)=c_a^i$ for all $v\in U_i $, and $\delta_{V(H)}(v)=c_b^j$ for each $v\in e_j $ implies that $U_i$, and $e_j$ corresponds to respectively, $|U_i|$, and $|e_i|$ numbers of same rows in the matrix representation of $\q$. Therefore, as indicated by the \Cref{cor_unit_eig},  $|U_i|$, and $|e_i|$ , respectively, leads to $0$ eigenvalue of $\q$ with multiplicity $|U_i|-1$, and $|e_i|-1$. Similarly,  parts (1) and (2) of the theorem follow from \Cref{com-unil-l}, \Cref{com-unil-a}.

  The collection of  all the  peripheral components $\{U_i:i=1,\ldots,l\}$ is an $\mathfrak{R}_{tw}$-equivalence class with $|U_i|=t$, and each $|U_i|$ is a unit $U=W_E$ with $|E|=r$. that is the generator size of this $\mathfrak{R}_{tw}$-equivalence class is $r$. Similarly, the collection of all the central components $\{e_1,\ldots,e_r\}$ is an $\mathfrak{R}_{tw}$-equivalence class with $|e_j|=m$, and the generator size of this equivalence class is $l$. Therefore, the part (3) and (4) of this Theorem follow from the \Cref{adj_twin_unit}, and \Cref{lap_twin_unit}.
\end{proof}
It is easy to verify that $H$ is a uniform hypergraph with $\hat H=K_2$, a complete graph with $2$ vertices if and only if $H$ is a $(l,r)$-hyperflower with $t$-twins and $m$-homogeneous centre for some $l,r,t,m\in \mathbb{N}$. The \Cref{list-hyp-eig}, provide all the eigenvalues from the units and twin units, except two. The next Theorem gives us these remaining two eigenvalues for $A_{(H,\delta_{V(H)},\delta_{E(H)})}$, $L_{(H,\delta_{V(H)},\delta_{E(H)})}$, and $Q_{(H,\delta_{V(H)},\delta_{E(H)})}$.
\begin{thm}\label{lastonelap}
Suppose that $H$ is a $(l,r)$-hyperflower with $t$-twins and an $m$-homogeneous center. If $i)\frac{\delta_{E(H)}(e)}{|e|^2}=w$ for all $e\in E(H)$ for some constant $w$ and $ii)\delta_{V(H)}(v)=c$ for all $v\in V(H)$ for some constant $c$ then
\begin{enumerate}[leftmargin=*]
	\item $\frac{w}{c}r(t-1+m\gamma)$ is an eigenvalue of $A_{(H,\delta_{V(H)},\delta_{E(H)})}$ with eigenvector $\gamma\chi_W+\chi_U$,
	where $\gamma$ is a root of the equation
	\begin{equation} \label{hyp-root-eigen}
		rmx^2+[r(t-1)-l(m-1)]x-lt=0.
	\end{equation}
	\item $\frac{w}{c}|V(H)|$ is an eigenvalue of $L_{(H,\delta_{V(H)},\delta_{E(H)})}$ with eigenvector $|W|\chi_{U}-|U|\chi_{W} $.
	\item $\frac{w}{c} (lm+rt)$ and $0$ are two eigenvalues of $Q_{(H,\delta_{V(H)},\delta_{E(H)})}$ with eigenvectors $y=l\chi_{W}+r\chi_{U}$ and $z=\sum\limits_{i=1}^l\chi_{\{u_{i1}\}}-\sum\limits_{j=1}^r\chi_{\{v_{j1}\}}$ respectively, for some fixed $u_{i1}\in U_i$, $ v_{j1}\in e_j$, $i=1,\ldots,l$, and $j=1,\ldots, r$. 
\end{enumerate}
\end{thm}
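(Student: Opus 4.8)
The plan is to verify each claim by directly evaluating the relevant operator on the proposed eigenvector, exploiting the drastic simplification afforded by the hypotheses. Since $\delta_V\equiv c$ and $\delta_E(e)/|e|^2\equiv w$, each of $\adj$, $\lap$, and $\q$ reduces to $\tfrac{w}{c}$ times a purely combinatorial edge-sum; for instance $(\q x)(v)=\tfrac{w}{c}\sum_{e\in E_v(H)}\sum_{u\in e}x(u)$. I would first record the combinatorial skeleton of the hyperflower: $|U|=lt$ and $|W|=rm$; every edge has the form $e_{ki}=e_k\cup U_i$ with exactly $m$ centre-vertices and $t$ periphery-vertices; a vertex in $U_i$ lies in exactly the $r$ edges $\{e_{ki}:k=1,\dots,r\}$, while a vertex in $e_j$ lies in exactly the $l$ edges $\{e_{ji}:i=1,\dots,l\}$. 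Every eigenvector in the statement except $z$ is constant on $U$ and constant on $W$, so the operator values split into just two cases according to the block of $v$.

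For part (1) I would substitute $x=\gamma\chi_W+\chi_U$ into the adjacency sum and compute $(\adj x)(v)$ in the two cases. For $v\in U_i$ each of the $r$ edges $e_{ki}$ contributes $m\gamma+(t-1)$ — the $m$ centre-vertices at value $\gamma$ plus the $t-1$ periphery-vertices surviving the exclusion $u\ne v$ — so $(\adj x)(v)=\tfrac{w}{c}\,r(t-1+m\gamma)$, which is the claimed eigenvalue and equals $\lambda x(v)$ as $x(v)=1$ on $U$. For $v\in e_j$ each of the $l$ edges $e_{ji}$ contributes $(m-1)\gamma+t$, so $(\adj x)(v)=\tfrac{w}{c}\,l\bigl((m-1)\gamma+t\bigr)$, which must equal $\lambda\gamma$. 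Equating the two expressions for $\lambda$ and clearing $\tfrac{w}{c}$ gives $l\bigl((m-1)\gamma+t\bigr)=\gamma\,r(t-1+m\gamma)$, which rearranges exactly to $rm\gamma^2+[r(t-1)-l(m-1)]\gamma-lt=0$. Hence any root $\gamma$ of \eqref{hyp-root-eigen} makes $\gamma\chi_W+\chi_U$ an eigenvector with eigenvalue $\tfrac{w}{c}r(t-1+m\gamma)$. The only delicate point is the $u\ne v$ exclusion in the adjacency sum, which is precisely what breaks the symmetry between the two cases and forces the quadratic.

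Parts (2) and (3)(a) follow the same two-case template but hold by design rather than by constraint. For $\lap$ with $x=|W|\chi_U-|U|\chi_W$, differences $x(v)-x(u)$ vanish within a block, so a vertex of $U$ receives $m(|U|+|W|)$ from each of its $r$ edges and a vertex of $W$ receives $-t(|U|+|W|)$ from each of its $l$ edges; dividing by $x(v)=|W|=rm$, resp.\ $x(v)=-|U|=-lt$, yields $\tfrac{w}{c}(|U|+|W|)=\tfrac{w}{c}|V(H)|$ in both cases. For $\q$ with $y=l\chi_W+r\chi_U$ the clean observation is that every edge has the \emph{same} sum $\sum_{u\in e_{ki}}y(u)=ml+tr$; since a $U$-vertex has $y$-value $r$ and lies in $r$ edges while a $W$-vertex has $y$-value $l$ and lies in $l$ edges, in either case $(\q y)(v)=\tfrac{w}{c}\,y(v)\,(ml+tr)=\tfrac{w}{c}(lm+rt)\,y(v)$.

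Finally, part (3)(b) is immediate from the same edge-sum viewpoint: $z$ places $+1$ on one representative of each $U_i$ and $-1$ on one representative of each $e_j$, so for every edge $e_{ki}=e_k\cup U_i$ exactly one $+1$ and one $-1$ survive, giving $\sum_{u\in e_{ki}}z(u)=0$; therefore $(\q z)(v)=\tfrac{w}{c}\sum_{e\in E_v(H)}0=0$ for all $v$, i.e.\ $\q z=0$. No genuine obstacle arises in parts (2)--(3); the whole argument is organized bookkeeping, with the adjacency exclusion term in part (1) the only place that demands real care.
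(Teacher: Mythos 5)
Your proposal is correct and follows essentially the same route as the paper's proof: direct evaluation of $\adj$, $\lap$, and $\q$ on the proposed vectors, splitting into the periphery/center cases, with the quadratic \eqref{hyp-root-eigen} arising from equating the two block values exactly as in the paper, and part (3) reduced to the constant edge-sums $\sum_{u\in e}y(u)=lm+rt$ and $\sum_{u\in e}z(u)=0$. If anything, your sign bookkeeping in part (2) (dividing by $x(v)=-|U|$ on the center) is slightly more careful than the paper's displayed computation, which suppresses the negative sign that cancels against $y(v)$.
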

\begin{proof}
Suppose that $y_{\gamma}=\gamma\chi_W+\chi_U$. Therefore, 
\begin{align*}
	(A_{(H,\delta_{V(H)},\delta_{E(H)})}y_{\gamma})(v)
	%	&=\sum\limits_{e\in E_v}\frac{\delta_E(e)}{\delta_{V(H)}(v)}\frac{1}{|e|^2}\sum\limits_{u\in e;u\ne v}y_{\gamma}(u)\\
	&=
	\begin{cases}
		r\frac{w}{c}(t-1+m\gamma)  &\text{~if~} v\in U,\\
		l\frac{w}{c}(t+\gamma(m-1))
		&\text{~otherwise.}
	\end{cases}.
\end{align*}
Therefore, if $y_{\gamma}$ is an eigenvector of $A_{(H,\delta_{V(H)},\delta_{E(H)})}$ then $\gamma$ is a root of the equation 
$$rmx^2+[r(t-1)-l(m-1)]x-lt=0,$$
and in that case $r\frac{w}{c}(t-1+m\gamma)$ is an eigenvalue of $A_{(H,\delta_{V(H)},\delta_{E(H)})}$.

Consider $y=|W|\chi_{U}-|U|\chi_{W}\in \mathbb{C}^V$. 
%	Evidently, for all $v\in V(H)$, 
%$$(L_{(H,\delta_{V(H)},\delta_{E(H)})}y)(v)=\sum\limits_{e\in E_v}\frac{\delta_E(e)}{\delta_{V(H)}(v)}\frac{1}{|e|^2}\sum\limits_{u\in e}(y(v)-y(u)).$$
Note that for any $v\in U_i$, one has $E_v(H)=E_{U_i}$ for all $i\in [l]$. Since, the center of the hyperflower is $m$-uniform and $|E_{U_i}|=r$ for all $i\in [l]$, one has $\sum\limits_{u\in e}(y(v)-y(u))=m(|W|+|U|)$ for any $e\in E_{U_i}$. Therefore, by using the facts $\frac{\delta_{E(H)}(e)}{|e|^2}=w$ for all $e\in E(H)$ for some constant $w$ and $\delta_{V(H)}(v)=c$ for all $v\in V(H)$ for some constant $c$, for any $v\in U_i$, we have if $v\in U$ then $(Ly)(v)=\frac{w}{c}(|U|+|W|)mr=\frac{w}{c}(|U|+|W|)y(v)$.

Similarly, using the facts, $|U_i|=t$ for all $i\in [l]$ and $|E_{e_j}|=l$ for all $j\in [r]$, we have  if $v\in W$ then $(Ly)(v)=\frac{w}{c}(|U|+|W|)lt=\frac{w}{c}(|U|+|W|)y(v)$. The result on $Q_{(H,\delta_{V(H)},\delta_{E(H)})}$ follows from the fact that $\sum\limits_{u\in e}y(u)=(lm+rt)$ and $\sum\limits_{u\in e}z(u)=0$ for all $e\in E(H)$. This completes the proof.
\end{proof}

\subsection*{Spectra to hyperflower}
We can reconstruct the hyperflower from its spectra( of $Q_{(H,\delta_{V(H)},\delta_{E(H)})}$, $A_{(H,\delta_{V(H)},\delta_{E(H)})}$ and $L_{(H,\delta_{V(H)},\delta_{E(H)})}$). Since we know $H$ is a hyperflower, we need to either figure out the peripheral and central components or compute the value of $l,r,t,m$.

Suppose we know the spectra of $Q_{(H,\delta_{V(H)},\delta_{E(H)})}$ of a hyperflower, consider the eigenvector $y$ corresponding to the largest eigenvalue. The range of $y$ consists of only two real numbers $a,b$. One of the $\{y^{-1}(a), y^{-1}(b)\}$ is the center $W$ and the another is the periphery $U$ of the hyperflower. For each maximal subset, $U_i$ of $U$ with $S_{U_i}$ is a subspace of the eigenspace of $0$ we have $U_i$ as a peripheral component. Similarly, for each maximal subset $e_k$ of $W$ with $S_{e_k}$ as a subspace of the eigenvalue $0$, we have $e_k$ as a central component.    
Similarly, considering the eigenvector corresponding to the largest eigenvalue of $A_{(H,\delta_{V(H)},\delta_{E(H)})}$, we can separate the periphery and centre of $H$. Each maximal subset $U_i$ of the periphery with $S_{U_i}$ is a subspace of the eigenspace of the eigenvalue of $A_{(H,\delta_{V(H)},\delta_{E(H)})}$, and similarly, we can point out the central components.
In the same way, we can reconstruct a hyperflower from the spectra of $L_{(H,\delta_{V(H)},\delta_{E(H)})}$.
	%If we consider the spectra of $A_{(H,\delta_{V(H)},\delta_{E(H)})}$ and $Q_{(H,\delta_{V(H)},\delta_{E(H)})}$ simultaneously then we can get rid of the condition that says $\delta_{V(H)}$ need to be a constant function.
	
	%\begin{thm}
	% Let $H$ be a hypergraph. If $U(\subset V(H))$ is a maxmimal set such that there exists an eigenvalue $\lambda $ of $Q_{(H,\delta_{V(H)},\delta_{E(H)})}$ and $\mu$ of $A_{(H,\delta_{V(H)},\delta_{E(H)})}$ with $S_U$ is a subspace of both the eigenspace of $\lambda$ and $\mu$ then $U$ is a $\sigma_H$-symmetric set of $H$. 
	%\end{thm}
	%\begin{proof}
	% Proceeding similar to \Cref{converse-symm-adj}, we can prove $U$ is co-regular and $\delta_{V(H)}$ is  constant on $U$. The rest of the result can be proven by using the same method as in the proof of \Cref{converse-symm-Q}.
	%\end{proof}
	%In this section, we demonstrated that regular set, co-regular set, and symmetric sets in $H$ correspond to some eigenvalues and eigenvectors of $A_{(H,\delta_{V(H)},\delta_{E(H)})}$, $L_{(H,\delta_{V(H)},\delta_{E(H)})}$, and $Q_{(H,\delta_{V(H)},\delta_{E(H)})}$. Conversely, some specific patterns of the eigenvectors of $A_{(H,\delta_{V(H)},\delta_{E(H)})}$, and $Q_{(H,\delta_{V(H)},\delta_{E(H)})}$ ensure the existence of regular, symmetric, and co-regular set.
	Since a hyperflower $H$ is such a hypergraph that $\hat H=K_2$, other than $2$ eigenvalues associated with the quotient matrices related to the contraction $\hat{H}$, all the eigenvalues of $\lap,\adj$, and $\q$ can be calculated from units and twin units. Thus, using our results in the previous sections, we can provide the complete spectra of hyperflowers. If for a hypergraph $H$, the contraction hypergraph $\hat{H}$ contains many vertices, then using our results, we can provide only a partial list of eigenvalues from units and twin units, and for the remaining eigenvalues, we need to calculate the eigenvalues of the quotient matrix associated with $\hat{H}$ provided in the previous sections. 

\section{Some Applications}\label{app}
\subsection{Co-spectral hypergraphs}From \Cref{cor_unit_eig}, we can conclude $H$, and $H'$ are two hypergraphs such that there exists a cardinality preserving bijection $\mathfrak{g}:\mathfrak{U}(H)\to \mathfrak{U}(H')$ with $\hat{Q}_H$, and $\hat{Q}_{H'}$ are co-spectral then the general signless Laplacians of the two hypergraphs are co-spectral.
 \begin{prop}\label{cospec-q}
     Let $H$ and $H'$ be two hypergraphs with $|V(H)|=|V(H')|$.  If there exists a bijection $\mathfrak{g}:\mathfrak{U}(H)\to \mathfrak{U}(H')$ with $|\mathfrak{g}(W_E)|=|W_E|$, and $\hat{Q}_H$, and $\hat{Q}_{H'}$ are co-spectral, then $Q_{(H,\delta_{V(H)},\delta_{E(H)})}$ and $Q_{(H',\delta_{V(H')},\delta_{E(H')})}$ are co-spectral. 
     
 \end{prop}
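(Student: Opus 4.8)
The plan is to read off the complete spectrum of each general signless Laplacian from \Cref{cor_unit_eig} and then match the two resulting multisets of eigenvalues. Throughout I assume, as is implicit in the very definition of $\hat{Q}_H$ and $\hat{Q}_{H'}$, that $\delta_{V(H)}$ is constant on each unit of $H$ and likewise for $H'$, so that \Cref{cor_unit_eig} applies to both hypergraphs.

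First I would apply \Cref{cor_unit_eig} to $H$. Part (1) contributes the eigenvalue $0$ with total multiplicity $\sum_{W_E\in\mf{U}(H)}(|W_E|-1)=|V(H)|-|\mf{U}(H)|$, while part (2) contributes exactly the spectrum of $\hat{Q}_H$, that is, $|\mf{U}(H)|$ further eigenvalues. Because the eigenvectors produced in the two parts are linearly independent and number $|V(H)|$ in total, this list is the complete spectral multiset of $\q$. The identical description holds for $Q_{(H',\delta_{V(H')},\delta_{E(H')})}$ with $H'$ in place of $H$.

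Next I would compare the two multisets. Since $\mathfrak{g}:\mf{U}(H)\to\mf{U}(H')$ is a bijection we have $|\mf{U}(H)|=|\mf{U}(H')|$, and combined with the hypothesis $|V(H)|=|V(H')|$ this yields $|V(H)|-|\mf{U}(H)|=|V(H')|-|\mf{U}(H')|$; hence the part-(1) eigenvalue $0$ occurs with the same multiplicity on both sides. Since $\hat{Q}_H$ and $\hat{Q}_{H'}$ are co-spectral by hypothesis, their spectra coincide as multisets, so the part-(2) contributions also agree. Adding the two contributions shows that $\q$ and $Q_{(H',\delta_{V(H')},\delta_{E(H')})}$ have identical spectra, which is the claim.

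The only delicate point is the bookkeeping of the eigenvalue $0$, since it arises from part (1) but may also appear among the eigenvalues of $\hat{Q}_H$. Treating the spectrum as the disjoint union with multiplicity of the two contributions---legitimate precisely because of the linear independence asserted in \Cref{cor_unit_eig}---avoids any double counting: the part-(1) count of zeros is matched by the cardinality argument above, while any zeros contributed by $\hat{Q}_H$ are matched by those of $\hat{Q}_{H'}$ through co-spectrality. Thus the total multiplicity of $0$, and of every other value, agrees on both sides, completing the proof.
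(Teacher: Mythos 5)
Your proof is correct and follows essentially the same route as the paper: read off the complete spectrum of each general signless Laplacian from \Cref{cor_unit_eig}, match the part-(1) zero eigenvalues via the cardinality-preserving bijection $\mathfrak{g}$, and match the part-(2) eigenvalues via the co-spectrality of $\hat{Q}_H$ and $\hat{Q}_{H'}$. Your explicit flagging of the tacit hypothesis that $\delta_{V(H)}$ (resp.\ $\delta_{V(H')}$) is constant on each unit, and your careful bookkeeping of the multiplicity of $0$, are points the paper leaves implicit, but the argument is the same.
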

 \begin{proof}
     Since there exists a bijection $\mathfrak{g}:\mathfrak{U}(H)\to \mathfrak{U}(H')$, such that $|\mathfrak{g}(W_E)|=|W_E|$, all the eigenvalues described in \Cref{cor_unit_eig}(1) for $Q_{(H',\delta_{V(H')},\delta_{E(H')})}$ are equal. Since $\hat{Q}_H$, and $\hat{Q}_{H'}$ are co-spectral, all the eigenvalues provided in \Cref{cor_unit_eig}(2) are same for $Q_{(H,\delta_{V(H)},\delta_{E(H)})}$ and $Q_{(H',\delta_{V(H')},\delta_{E(H')})}$. Thus, these two operators are also co-spectral, and the result follows.
 \end{proof}
Using the following example, we describe the results in \Cref{cospec-q}.
\begin{exm}
Consider two hypergraphs $H, H'$ with $V(H)=V(H')=\{1,2,\ldots,8\}$, $E(H)=\{\{1, 2, 3, 4\},$  $\{1,2,5,6\},\{3,4,5,6\},\}$ and, $E(H')=\{\{1, 2, 7, 8\},\{3,4,7,8\},\{5,6,7,8\}\}$. Suppose that $\delta_{V(H)}$, $\delta_{V(H')}$, $\delta_{E(H)}$, $\delta_{E(H')}$ are chosen as in \cite{MR4208993}. The matrix representation of $\hat{Q}_H$ and $\hat{Q}_{H'}$  are $\hat{Q}_H=\frac{2}{3}\left[\begin{smallmatrix}
    2& 1& 1& 0\\
    1& 2& 1& 0\\
    1& 1& 2& 0\\
    0&0&0&0
\end{smallmatrix}\right]$, and 
$\hat{Q}_{H'} =
\frac{2}{3}\left[\begin{smallmatrix}
  1&0&0&1\\
  0&1&0&1\\
  0&0&1&1\\
  1&1&1&3
\end{smallmatrix}\right]$, respectively. Since $\hat{Q}_H$ and $\hat{Q}_{H'}$ are co-spectral, $Q_{(H,\delta_{V(H)},\delta_{E(H)})}$ and $Q_{(H',\delta_{V(H')},\delta_{E(H')})}$ are also co-spectral. 
 \end{exm}
\subsection{Random-walk on hypergraphs using units}\label{sec-unit-rand-walk}
%After being introduced in the seminal paper \cite{introrand},
\textit{Random walk on graphs} (\cite{introrand}) is an active area of study for a long time( \cite{grimmett2018probability}). In the last decade, random walks on hypergraphs have attracted considerable attention \cite{MR4420488,MR3980498,hg-mat}. A random walk on hypergraph $H$ is a map $\mathfrak{r}_H:\mathbf{T}\to V(H)$, such that $\mathbf{T}=\mathbb{N}\cup\{0\}$,%set of all non-negative integers, is the domain of time
and $\mathfrak{r}_H(t)$ depends only on its previous state $\mathfrak{r}_H(t-1)$ for all $t(\ne 0)$.
Thus, $\mathfrak{r}_H $ depends on a matrix $P_H$ of order $|V(H)|$, referred as the probability transition marix, defined as $P_H(u,v)=Prob(\mathfrak{r}_H(i+1)=v|\mathfrak{r}_H(i)=u)$, %which is independent of $i$ 
and $\sum\limits_{u\in V(H)}P_H(u,v)=1$. Suppose that $x_i(\in[0,1]^{V(H)})$ is the probability distribution at time $i$, that is, $x_i(v)=Prob(\mathfrak{r}_H(i)=v)$ and the iteration $x_{i+1}=P^T_Hx_i$ defines the random walk.

Here, we consider the transition probability matrix $P_H$ for random walk studied in \cite{up2021} and is defined as 
$${P_H}(u,v)=
\begin{cases}
\frac{1}{r(u)}\sum\limits_{e\in E_u(H)\cap E_v(H)}\frac{\delta_{E(H)}(e)}{\delta_{V(H)}(u)}\frac{1}{|e|^2} & \text{~if~} u\neq v,\\
0& \text{~otherwise,~}
\end{cases} $$
for all $u,v\in V(H)$, where $r(u):=\sum\limits_{e\in E_u(H)}\frac{\delta_{E(H)}(e)}{\delta_{V(H)}(u)}\frac{|e|-1}{|e|^2}$. 

Let $\Delta_H=I-P_H$. For any $x\in \mathbb{C}^{V(H)}$, and $v\in V(H)$ we have \begin{align*}
(\Delta_H x)(u)
&=x(u)-(P_H x)(u)
\\&=\sum\limits_{e\in E_v(H)}\frac{\delta_{E(H)}(e)}{\delta_{V(H)}(u)r(u)}\frac{1}{|e|^2}\sum\limits_{u\in e }(x(u)-x(v))
\end{align*}

Let $\mathfrak{r}_H$ be a random walk on a hypergraph $H$. For any $v\in V(H)$ and $t\in\n$, we denote the event $\{\mathfrak{r}_H(t)=v,\mathfrak{r}_H(i)\ne v\text{~for~}0\le i<t\}$ of  \textit{first hitting the vertex $v$ at time $t$} as $(T_v=t )$. The event $\{\mathfrak{r}_H(t)=v,\mathfrak{r}_H(i)\ne v\text{~for~}0\le i<k|\mathfrak{r}_H(0)=u\}$ is referred as the \textit{ first hitting of $v$ at time $t$ where $\mathfrak{r}_H$ starts at $u$} and is denoted by $(T^u_v=t)$. The expected time taken by the random walk to reach from $u$ to $v$ is referred to as \textit{expected hitting time from $u$ to $v$} and is defined as $ET(u,v)=\sum\limits_{k\in\n}kProb(T^u_v=k)$.
\begin{thm}\label{unit-hit}
Let $\mathfrak{r}_H$ be a random walk on a hypergraph $H$. If $W_{E_0}\in \mathfrak{U}(H)$ then for any $t\in \n$, we have $Prob(T^u_{u_1}=t)=Prob(T^u_{u_2}=t)$ for any $u_1,u_2\in W_{E_0}$ and $u\in V(H)\setminus\{u_1,u_2\}$.
\end{thm}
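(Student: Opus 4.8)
The plan is to realize the transposition $\phi_{u_1u_2}$ as a weight-preserving symmetry of the random walk and then transport first-hitting paths across it. The first step is to observe that the factor $\delta_{V(H)}(u)$ appearing in the summand of $P_H(u,v)$ cancels against the same factor hidden in $r(u)$, so that for $u\ne v$ one has
$$P_H(u,v)=\frac{\sum_{e\in E_u(H)\cap E_v(H)}\sigma_H(e)}{\sum_{e\in E_u(H)}(|e|-1)\sigma_H(e)}.$$
In particular $P_H(u,v)$ is completely determined by the two stars $E_u(H)$ and $E_v(H)$ together with the weight $\sigma_H$; crucially it retains no dependence on $\delta_{V(H)}$, so that no constancy hypothesis on $\delta_{V(H)}$ over the unit is needed.

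Next I would show that $\phi:=\phi_{u_1u_2}$ is a symmetry of the chain, in the sense that $P_H(\phi(a),\phi(b))=P_H(a,b)$ for all $a,b\in V(H)$. Since $u_1,u_2\in W_{E_0}$ we have $E_{u_1}(H)=E_{u_2}(H)=E_0$, so every hyperedge of $H$ contains both $u_1$ and $u_2$ or neither; consequently $\phi$ fixes each hyperedge setwise. As $\phi$ is an involution, for any vertex $a$ and any $e\in E(H)$ one gets $\phi(a)\in e\iff a\in e$, whence $E_{\phi(a)}(H)=E_a(H)$ for every vertex $a$. Feeding $E_{\phi(a)}(H)=E_a(H)$ and $E_{\phi(b)}(H)=E_b(H)$ into the star-only formula for $P_H$ yields $P_H(\phi(a),\phi(b))=P_H(a,b)$ immediately.

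Finally I would express the first-hitting probability as a path sum,
$$Prob(T^u_v=t)=\sum_{\substack{w_0=u,\ w_t=v\\ w_i\ne v\ (0\le i<t)}}\ \prod_{i=0}^{t-1}P_H(w_i,w_{i+1}),$$
and push it through $\phi$. Because $u\notin\{u_1,u_2\}$ we have $\phi(u)=u$, while $\phi(u_1)=u_2$; the map $(w_0,\dots,w_t)\mapsto(\phi(w_0),\dots,\phi(w_t))$ is then a bijection from the paths witnessing $(T^u_{u_1}=t)$ onto the paths witnessing $(T^u_{u_2}=t)$. It fixes the initial vertex $u$ and sends the target $u_1$ to $u_2$, it preserves the avoidance constraint since $\phi(w_i)=u_2\iff w_i=u_1$, and by the symmetry established above it preserves the weight $\prod_i P_H(w_i,w_{i+1})$ of each path. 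Summing the matched, equal weights gives $Prob(T^u_{u_1}=t)=Prob(T^u_{u_2}=t)$.

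The main obstacle is the verification of the chain symmetry $P_H(\phi(a),\phi(b))=P_H(a,b)$: one must notice the cancellation of $\delta_{V(H)}$ so that the identity requires no hypothesis beyond $u_1,u_2$ lying in a common unit, and one must check that $\phi$ fixing every hyperedge setwise forces $E_{\phi(a)}(H)=E_a(H)$ for all vertices, not merely for $u_1,u_2$. Once this symmetry is in hand, the path-sum bijection is routine, the only delicate point being that the constraint of avoiding the target before time $t$ is transported correctly, which follows from $\phi$ being a bijection interchanging $u_1$ and $u_2$.
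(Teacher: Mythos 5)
Your proof is correct, and it follows the same overall strategy as the paper --- expressing $Prob(T^u_v=t)$ as a sum over trajectories of products of transition probabilities and then exhibiting a weight-preserving bijection between the two trajectory sets --- but your bijection is genuinely different and, in fact, more robust. The paper's map fixes all intermediate vertices and only changes the endpoint, $(v_0=u,v_1,\ldots,v_{t-1},u_1)\mapsto(v_0=u,v_1,\ldots,v_{t-1},u_2)$, relying only on the identity $P_H(w,u_1)=P_H(w,u_2)$ for $w\notin\{u_1,u_2\}$; as written, this map mishandles trajectories that pass through $u_2$ before time $t$ (the image then visits, or even sits at, the new target $u_2$ before time $t$, so it is not a first-hitting trajectory for $u_2$). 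Your map, which applies the transposition $\phi_{u_1u_2}$ to \emph{every} coordinate, transports the avoidance constraint correctly ($w_i\ne u_1\iff\phi(w_i)\ne u_2$) and rests on the stronger symmetry $P_H(\phi(a),\phi(b))=P_H(a,b)$ for all $a,b$, which you justify via the setwise invariance of every hyperedge under $\phi$ and hence $E_{\phi(a)}(H)=E_a(H)$ for all $a$; this is the same device the paper itself uses later in its twin-unit hitting-time theorem. Your explicit observation that $\delta_{V(H)}(u)$ cancels between the numerator of $P_H(u,v)$ and $r(u)$, so that $P_H$ depends only on stars and $\sigma_H$, is also a worthwhile clarification: it explains why, consistent with the statement (and in contrast to \Cref{unit-rand}), no constancy hypothesis on $\delta_{V(H)}$ over the unit is needed, a point the paper leaves implicit. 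In short: same path-sum skeleton, but your trajectory-level symmetry repairs a small defect in the paper's endpoint-only bijection.
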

\begin{proof}
For any $u,v(\ne u)\in V(H)$, and $t\in\n$, we have $(T^u_v=t)=A_t\cap A_{t-1}\cap\ldots\cap A_0$, where $A_t=(\mathfrak{r}_H(t)=v)$, $A_i=(\mathfrak{r}_H(i)\ne v)$ for all $i=1,\ldots,t-1$, $A_0=(\mathfrak{r}_H(0)=u)$. Therefore,
$$  Prob(T^u_v=t)=(\sum\limits_{\mathfrak{r}_H(t-1)(\ne v)\in V(H)}P_H(\mathfrak{r}_H(t-1),v))\prod\limits_{i=1}^{t-1}(\sum\limits_{\mathfrak{r}_H(t-2)(\ne v)\in V(H)}P_H(\mathfrak{r}_H(t-i-1),\mathfrak{r}_H(t-i))).$$
% \begin{align*}
	% Prob(T^u_v=t)&=P(A_tA_{t-1}\ldots A_0)\\&=Prob(A_t|A_{t-1}\ldots A_0)Prob(A_{t-1}|A_{t-2}\ldots A_0)\ldots Prob(A_1|A_0)P(A_0)\\& =(\sum\limits_{\mathfrak{r}_H(t-1)(\ne v)\in V(H)}P_H(\mathfrak{r}_H(t-1),v))(\sum\limits_{\mathfrak{r}_H(t-2)(\ne v)\in V(H)}P_H(\mathfrak{r}_H(t-2),\mathfrak{r}_H(t-1)))\ldots(\sum\limits_{v_{0}(\ne v)\in V(H)}P_H(v_{0},v_{1})).   \end{align*}
For any $u_1,u_2\in W_{E_0}$, since $P_H(u,u_1)=P_H(u,u_2)$ for all $u\in V(H)\setminus\{u_1,u_2\}$ and there exists a bijection between $(T^u_{v_1}=t),(T^u_{v_2}=t)$ defined by $(v_0=u,v_1,\ldots,v_{t-1},v_t=u_1)\mapsto (v_0=u,v_1,\ldots,v_{t-1},v_t=u_2)$, we have $Prob(T^u_{u_1}=t)=Prob(T^u_{u_2}=t)$ for any $u_1,u_2\in W_{E_0}$ and $u\in V(H)\setminus\{u_1,u_2\}$.
\end{proof}
Similarly, we can show $Prob(T_u^{u_1}=t)=Prob(T_u^{u_2}=t)$ for any $u_1,u_2\in W_{E_0}$ and $u\in V(H)\setminus\{u_1,u_2\}$.
\begin{cor}
Let $\mathfrak{r}_H$ be a random walk on a hypergraph $H$. If $W_{E_0}\in\mathfrak{U}(H)$ and $u_1,u_2\in W_{E_0}$ then $ET(u,u_1)=ET(u,u_2)$ and $ET(u_1,u)=ET(u_2,u)$ for all $u\in V(H)\setminus\{u_1,u_2\}$.
\end{cor}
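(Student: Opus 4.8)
The plan is to obtain both identities by feeding the termwise probability equalities of \Cref{unit-hit} (and its companion statement) directly into the series defining the expected hitting time. Recall that $ET(a,b)=\sum\limits_{k\in\n}k\,Prob(T^a_b=k)$, so the expected hitting time is completely determined by the distribution $k\mapsto Prob(T^a_b=k)$ of the first-hitting time. Hence it suffices to match these distributions for the two pairs under consideration, after which the two series agree term by term and therefore have equal sums.

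First I would establish $ET(u,u_1)=ET(u,u_2)$. By \Cref{unit-hit}, since $u_1,u_2\in W_{E_0}$ and $u\in V(H)\setminus\{u_1,u_2\}$, we have $Prob(T^u_{u_1}=k)=Prob(T^u_{u_2}=k)$ for every $k\in\n$. Multiplying by $k$ and summing over $k\in\n$ gives $ET(u,u_1)=\sum\limits_{k\in\n}k\,Prob(T^u_{u_1}=k)=\sum\limits_{k\in\n}k\,Prob(T^u_{u_2}=k)=ET(u,u_2)$.

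Next I would establish $ET(u_1,u)=ET(u_2,u)$ in exactly the same manner, this time invoking the companion statement recorded immediately after \Cref{unit-hit}, namely $Prob(T^{u_1}_u=k)=Prob(T^{u_2}_u=k)$ for every $k\in\n$; this is proved by the same trajectory-reversal/bijection argument, now varying the \emph{starting} vertex within the unit $W_{E_0}$ rather than the target. Weighting these equal summands by $k$ and summing yields $ET(u_1,u)=\sum\limits_{k\in\n}k\,Prob(T^{u_1}_u=k)=\sum\limits_{k\in\n}k\,Prob(T^{u_2}_u=k)=ET(u_2,u)$.

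There is essentially no genuine obstacle here: the entire content is already carried by \Cref{unit-hit} and its companion, and the passage from pointwise equality of the first-hitting distributions to equality of the expected hitting times is immediate from the definition of $ET$. The only point worth a remark is that each series has nonnegative terms and, after substitution, identical summands, so equality of the sums needs no rearrangement or interchange-of-limits justification; the two series are literally the same series written twice.
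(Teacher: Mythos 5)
Your proposal is correct and follows exactly the route the paper intends: the corollary is stated without proof precisely because it is immediate from \Cref{unit-hit} together with the remark following it (that $Prob(T_u^{u_1}=t)=Prob(T_u^{u_2}=t)$ as well), by substituting the termwise equal first-hitting probabilities into the defining series $ET(a,b)=\sum\limits_{k\in\n}k\,Prob(T^a_b=k)$. Your added observation that no rearrangement or convergence issue arises, since the two series have identical summands, is a harmless refinement of the same argument.
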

%	\begin{proof}    The result follows from \Cref{unit-hit}.	\end{proof}
We can show a similar result for symmetric sets.
\begin{thm}\label{sym-hit}
Let $\mathfrak{r}_H$ be a random walk on a hypergraph $H$. If $U$ is a $\sigma_H$-symmetric set with $\delta^{\prime}_{V(H)}(v)=c$, a constant for all $v\in U$ then for any $t\in \n$, we have $Prob(T^u_{u_1}=t)=Prob(T^u_{u_2}=t)$ for any $u_1,u_2\in U$ and $u\in V(H)\setminus\{u_1,u_2\}$.
\end{thm}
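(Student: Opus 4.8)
The plan is to exhibit, for each fixed $t$, a probability‑preserving bijection between the first‑hitting walks of $u_1$ and those of $u_2$, induced by the transposition $\phi_{u_1u_2}$ (the swap of $u_1$ and $u_2$ introduced in \Cref{df_eq_cmpt_op}). Writing out the transition matrix, for $a\ne b$ we have
$$P_H(a,b)=\frac{1}{\delta'_{V(H)}(a)}\sum_{e\in E_{ab}(H)}\sigma_H(e),$$
since $E_a(H)\cap E_b(H)=E_{ab}(H)$ and $\delta'_{V(H)}(a)=r(a)\delta_{V(H)}(a)$. The whole argument reduces to showing that $P_H$ is invariant under the swap, i.e. $P_H(\phi_{u_1u_2}(a),\phi_{u_1u_2}(b))=P_H(a,b)$ for every pair $a,b$.

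First I would record the two symmetry identities. For any $w\notin\{u_1,u_2\}$, the $\sigma_H$-symmetry of $U$ applied with the fixed vertex $w$ gives $\sum_{e\in E_{u_1w}(H)}\sigma_H(e)=c^{w}_U=\sum_{e\in E_{u_2w}(H)}\sigma_H(e)$. Because $P_H(w,u_1)$ and $P_H(w,u_2)$ share the denominator $\delta'_{V(H)}(w)$, this already yields the \emph{in}-equality $P_H(w,u_1)=P_H(w,u_2)$. For the \emph{out}-equality $P_H(u_1,w)=P_H(u_2,w)$ the numerators again agree by $\sigma_H$-symmetry, but now the denominators are $\delta'_{V(H)}(u_1)$ and $\delta'_{V(H)}(u_2)$; these coincide precisely because of the hypothesis $\delta'_{V(H)}\equiv c$ on $U$. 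Finally $P_H(u_1,u_2)=P_H(u_2,u_1)$ follows from $E_{u_1u_2}(H)=E_{u_2u_1}(H)$ together with the same constancy. The case $a,b\notin\{u_1,u_2\}$ is trivial and the diagonal is zero on both sides, so $\phi_{u_1u_2}$ preserves every transition probability.

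With this invariance in hand, I would define, for a walk $W=(w_0,\ldots,w_t)$, its image $\phi_{u_1u_2}(W)=(\phi_{u_1u_2}(w_0),\ldots,\phi_{u_1u_2}(w_t))$. Since $u\notin\{u_1,u_2\}$, the image starts again at $u$; since $\phi_{u_1u_2}(u_1)=u_2$, a walk ending at $u_1$ maps to one ending at $u_2$. The first‑hitting constraint transfers correctly: the condition $w_i\ne u_1$ for $i<t$ is equivalent to $\phi_{u_1u_2}(w_i)\ne u_2$ for $i<t$, so the event $(T^u_{u_1}=t)$ is carried bijectively onto $(T^u_{u_2}=t)$. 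The invariance of $P_H$ makes each step probability, hence the product $\prod_{i=0}^{t-1}P_H(w_i,w_{i+1})$, invariant, so the map is measure preserving. Summing over all such walks gives $Prob(T^u_{u_1}=t)=Prob(T^u_{u_2}=t)$.

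The crux — and the place where this statement genuinely differs from \Cref{unit-hit} — is that for a $\sigma_H$-symmetric set the transposition $\phi_{u_1u_2}$ need not be a hypergraph automorphism, so invariance of $P_H$ cannot be taken for granted. The delicate point is the \emph{out}-transitions $P_H(u_i,\cdot)$, whose normalisation is by the starting vertex; this is exactly why the constancy $\delta'_{V(H)}\equiv c$ on $U$ is imposed, and verifying that this single hypothesis suffices to render $\phi_{u_1u_2}$ probability preserving is the heart of the proof. The remaining combinatorial bookkeeping on walks is then routine, mirroring the argument of \Cref{unit-hit}.
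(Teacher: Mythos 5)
Your proof is correct, and in spirit it follows the paper's strategy: the paper's own proof of \Cref{sym-hit} merely records that $\sigma_H$-symmetry together with the constancy of $\delta'_{V(H)}$ on $U$ yields $P_H(u_1,w)=P_H(u_2,w)$, and then says to proceed as in \Cref{unit-hit}, i.e.\ via a probability-preserving bijection between the events $(T^u_{u_1}=t)$ and $(T^u_{u_2}=t)$. Where you genuinely differ is in the choice of that bijection, and your choice is the better one. The map used in the paper's proof of \Cref{unit-hit} sends $(v_0=u,v_1,\ldots,v_{t-1},v_t=u_1)$ to $(v_0=u,v_1,\ldots,v_{t-1},v_t=u_2)$, altering only the terminal vertex; as literally written this map is defective, because a walk in $(T^u_{u_1}=t)$ may visit $u_2$ at an intermediate time (or stand at $u_2$ at time $t-1$), in which case its image fails the first-hitting constraint for $u_2$ and is not in $(T^u_{u_2}=t)$ at all. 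Your global transposition $\phi_{u_1u_2}$ applied to every coordinate of the walk repairs exactly this: intermediate visits to $u_2$ become visits to $u_1$, the avoidance condition transfers correctly, and the price — which you pay in full — is verifying swap-invariance of all transitions, namely the in-transitions $P_H(w,u_1)=P_H(w,u_2)$ (from $\sigma_H$-symmetry alone, since the normalisation is by $\delta'_{V(H)}(w)$), the out-transitions $P_H(u_1,w)=P_H(u_2,w)$ (where $\delta'_{V(H)}\equiv c$ on $U$ enters), and the cross term $P_H(u_1,u_2)=P_H(u_2,u_1)$; the paper records only the out-equality and leaves the rest implicit. You are also right that this checking cannot be waived here: unlike in \Cref{unit-hit}, where $u_1,u_2$ share the same star and the swap extends to a hypergraph automorphism, for a mere $\sigma_H$-symmetric set $\phi_{u_1u_2}$ need not be an automorphism, so invariance of $P_H$ must be derived from the hypotheses. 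In short, yours is not so much a different route as a complete and correct implementation of the paper's two-line sketch — indeed the version one would want to substitute for it.
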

\begin{proof}
Since $U$ is a $\sigma_H$-symmetric set with $\delta^{\prime}_{V(H)}(v)=c$, a constant for all $v\in U$ we have $P_H(u_1,u)=P_H(u_2,u)$ for all $u\in V(H)\setminus\{u_1,u_2\}$. Thus, by proceeding like the previous proof, the result follows.
\end{proof}
Suppose that $W_{E_i}$ and $W_{E_j}$ are twin units. If there exists a bijection $\mathfrak{h_{ij}}:W_{E_i}\to W_{E_j}$ and $x\in\mathbb{C}^{V(H)}$ such that $x(\mathfrak{h}_{ij}(v))=x(v)$ for all $v\in W_{E_i}$ then $\mathfrak{h_{ij}}:W_{E_i}\to W_{E_j}$ is called $x$-preserving. Similarly, the canonical bijection $\mathfrak{f}_{E_iE_j}:E_i\to E_j$ is called $\alpha$-preserving for some $\alpha(\in\mathbb{C}^{E(H)})$ if $\alpha(\mathfrak{f}_{E_iE_j}(e))=\alpha(e)$ for all $e\in E_i$.
\begin{thm}
Let $\mathfrak{r}_H$ be a random walk on a hypergraph $H$. If $W_{E_i}$ and $W_{E_j}$ are twin unit and $\mathfrak{h_{ij}}:W_{E_i}\to W_{E_j}$ is a $\delta_{V(H)}$-preserving bijection with $\mathfrak{h_{ij}}(u_1)=u_2$ and the canonical bijection  $\mathfrak{f}_{E_iE_j}:E_i\to E_j$ is $\delta_{E(H)}$-preserving then for any $t\in \n$, we have $Prob(T^u_{u_1}=t)=Prob(T^u_{u_2}=t)$ for any $u_1\in W_{E_i}$ and $u_2\in W_{E_j}$ and $u\in V(H)\setminus(W_{E_i}\cup W_{E_j})$.
\end{thm}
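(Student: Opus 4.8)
The plan is to exhibit an involutive permutation $\psi$ of $V(H)$ that interchanges the twin units $W_{E_i}$ and $W_{E_j}$ through $\mathfrak{h}_{ij}$, fixes every other vertex, and is at the same time a hypergraph automorphism preserving both $\delta_{V(H)}$ and $\delta_{E(H)}$. Once $\psi$ is shown to preserve the transition matrix $P_H$, the desired equality of first-hitting probabilities follows by transporting first-hitting walks through $\psi$, exactly as in the proof of \Cref{unit-hit}.

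First I would define $\psi:V(H)\to V(H)$ by $\psi|_{W_{E_i}}=\mathfrak{h}_{ij}$, $\psi|_{W_{E_j}}=\mathfrak{h}_{ij}^{-1}$, and $\psi(w)=w$ for $w\notin W_{E_i}\cup W_{E_j}$; this is a well-defined involution since $\mathfrak{h}_{ij}$ is a bijection (note $|W_{E_i}|=|W_{E_j}|$). To see that $\psi$ is a hypergraph automorphism, recall that a unit meets a hyperedge $e$ either in the whole unit or not at all, and that twin units are never unit-neighbours (\Cref{twin_nbd}), so $E_i\cap E_j=\emptyset$ and no hyperedge contains both units. Hence for $e\in E_i$ one has $W_{E_i}\subseteq e$, $W_{E_j}\cap e=\emptyset$, and $\psi(e)=(e\setminus W_{E_i})\cup W_{E_j}=\mathfrak{f}_{E_iE_j}(e)\in E_j$; symmetrically $\psi(e)=\mathfrak{f}_{E_iE_j}^{-1}(e)\in E_i$ for $e\in E_j$; and $\psi(e)=e$ for every remaining hyperedge. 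Thus $\psi$ permutes $E(H)$, it preserves hyperedge cardinalities since $|W_{E_i}|=|W_{E_j}|$, it preserves $\delta_{V(H)}$ because $\mathfrak{h}_{ij}$ is $\delta_{V(H)}$-preserving, and it preserves $\delta_{E(H)}$ because $\mathfrak{f}_{E_iE_j}$ is $\delta_{E(H)}$-preserving.

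The key step is to deduce $P_H(\psi(a),\psi(b))=P_H(a,b)$ for all $a,b\in V(H)$. Since $\psi$ carries the star $E_a(H)$ bijectively onto $E_{\psi(a)}(H)$ while preserving $|e|$ and $\delta_{E(H)}(e)$, and preserves $\delta_{V(H)}(a)$, the normalizing quantity satisfies $r(\psi(a))=r(a)$; likewise $E_{\psi(a)}(H)\cap E_{\psi(b)}(H)=\psi\big(E_a(H)\cap E_b(H)\big)$, so each summand in the defining formula for $P_H(\psi(a),\psi(b))$ matches the corresponding summand for $P_H(a,b)$. I expect this verification --- tracking that stars, cardinalities, and the two weight functions all transport correctly under $\psi$ --- to be the only genuine obstacle; everything downstream is formal.

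Finally I would conclude by the path-transport argument. Writing $Prob(T^u_{u_1}=t)$ as the sum, over walks $(u=w_0,w_1,\ldots,w_{t-1},w_t=u_1)$ with $w_s\neq u_1$ for $0\le s<t$, of the product $\prod_{s=0}^{t-1}P_H(w_s,w_{s+1})$, I apply $\psi$ coordinatewise. Since $\psi(u)=u$ (as $u\notin W_{E_i}\cup W_{E_j}$), $\psi(u_1)=u_2$, and $\psi$ is a bijection with $\psi(w)=u_2\iff w=u_1$, the map $(w_s)_s\mapsto(\psi(w_s))_s$ is a bijection between the first-hitting walks for $u_1$ and those for $u_2$; by the previous step each transported walk carries the identical product of transition probabilities. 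Summing over walks then gives $Prob(T^u_{u_1}=t)=Prob(T^u_{u_2}=t)$, completing the proof.
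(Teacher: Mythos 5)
Your proposal is correct and is essentially the paper's own argument: the involution $\psi$ you construct (swap $W_{E_i}$ and $W_{E_j}$ via $\mathfrak{h}_{ij}$ and $\mathfrak{h}_{ij}^{-1}$, fix everything else) is exactly the coordinatewise map $v_k\mapsto v'_k$ used in the paper to transport first-hitting trajectories for $u_1$ bijectively onto those for $u_2$. You merely make explicit the verification the paper leaves implicit --- that $\psi$ permutes $E(H)$ via $\mathfrak{f}_{E_iE_j}$ and preserves $\delta_{V(H)}$, $\delta_{E(H)}$, and hence $r$ and $P_H$ --- which is a sound elaboration, not a different route.
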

\begin{proof}
For each event $F_1=\{\mathfrak{r}_H(t)=u_1|\mathfrak{r}_H(i)=v_i,1\le i<t, \mathfrak{r}_H(0)=u\}$, there exist an event $F_1=\{\mathfrak{r}_H(t)=u_2|\mathfrak{r}_H(i)=v^\prime_i,1\le i<t, \mathfrak{r}_H(0)=u\}$ such that
%Suppose that $\mathfrak{r}_H(0)=u$ and $\mathfrak{r}_H(t)=u_1$. For a random walk $v_0=u,v_1,\ldots,v_t=u_1$ from $u$ to $u_1$, there exists a random walk $v^\prime_0=u,v^\prime_1,\ldots,v^\prime_t=u_2$ from $u$ to $u_2$ such that
$$v^\prime_k=\begin{cases}
	v_k & \text{~if~}v_k\in V(H)\setminus(W_{E_i}\cup W_{E_j}),\\
	h_{ij}(v_k) &\text{~if~} v_k\in W_{E_i},\\
	h^{-1}_{ij}(v_k) &\text{~if~} v_k\in W_{E_j}. 
\end{cases}$$
Since $h_{ij}$  is a $\delta_{V(H)}$-preserving bijection and the canonical bijection  $\mathfrak{f}_{E_iE_j}$ is $\delta_{E(H)}$-preserving, we have $Prob(F_1)=Prob(F_2)$. Therefore, $Prob(T^u_{u_1}=t)=Prob(T^u_{u_2}=t)$ for any $u_1\in W_{E_i}$ and $u_2\in W_{E_j}$ and $u\in V(H)\setminus(W_{E_i}\cup W_{E_j})$.
\end{proof}
\subsection{Unit-distance: A pseudo metric}\label{sec-pseudomet}
In a hypergraph $H$, a path between $v_0,v_l(\in V(H))$ of length $l$ is an alternating sequence of distinct vertices and edges $v_0e_1v_1\ldots v_{l-1}e_lv_l$ with $v_{i-1},v_i\in e_i$ for all $i=1,\ldots,l$ (see \cite{hg-mat}). In the case of a walk between $v_0,v_l$, the vertices and hyperedges may not be distinct.  This notion of the path and walk naturally induces a metric defined by $d(u,v)=$ length of the smallest path between $u,v(\in V(H))$. This notion of paths and distance has some limitations. 

\begin{figure}[ht]
\centering
\begin{tikzpicture}[scale=0.7]
	
		\node [style=none] (2) at (-11, -0.25) {};
		\node [style=none] (3) at (-3, -0.25) {};
		\node [style=none] (4) at (-8.5, -0.25) {};
		\node [style=none] (5) at (2.5, -0.25) {};
		\node [style=new style 0] (6) at (-9.5, -0.25) {\tiny 1};
		\node [style=new style 0] (7) at (-7, -0.25) {\tiny 2};
		\node [style=new style 0] (8) at (-4.5, -0.25) {\tiny 3};
		\node [style=none] (9) at (-2.25, -0.25) {};
		\node [style=none] (10) at (5.75, -0.25) {};
		\node [style=new style 0] (11) at (-1, -0.25) {\tiny 4};
		\node [style=new style 0] (12) at (1.25, -0.25) {\tiny 5};
		\node [style=new style 0] (13) at (4, -0.25) {\tiny 6};
		\node [style=none] (14) at (-10.25, 1) {\tiny e};
		\node [style=none] (15) at (-3.25, 1.25) {\tiny f};
		\node [style=none] (16) at (4.75, 1) {\tiny g};
	
		\draw (3.center)
		to [bend left, looseness=0.75] (2.center)
		to [bend left, looseness=0.75] cycle;
		\draw (5.center)
		to [bend left, looseness=0.75] (4.center)
		to [bend left, looseness=0.75] cycle;
		\draw (10.center)
		to [bend left, looseness=0.75] (9.center)
		to [bend left, looseness=0.75] cycle;
	
\end{tikzpicture}

\caption{A path of length $3$ between $1$ and $6$.}
\label{fig:walk-hyp}
\end{figure}
Consider the path of length $3$ between $1$ and $6$ in \Cref{fig:walk-hyp} composed of the hyperedges $e,f,g$. This path has multiple representations, and they are $1e2f4g6$, $1e3f4g6$, $1e2f5g6$, and $1e3f5g6$. Thus, this notion of the path leads us to an unavoidable redundancy that may count the same path multiple times. This redundancy will affect any notion that depends on the number of paths (or walks) between two vertices. For example, for a graph $G$, the $(u,v)$-th element of the $k$-th power of adjacency is the number of walk of length $k$ between $u,v\in V(G)$. This result does not hold for hypergraphs.

A vertex $u$ should be closer to a vertex belonging to the same unit than two other vertices outside the unit. This is not taken care of by the usual distance. For example, in the path given in \Cref{fig:walk-hyp}, we have $d(2,3)=1=d(2,4)$ though $3,2$ belongs to the same unit and $2,4$ belong to different units. %therefore, $3$ should be more close to $2$ in comparison with $4$. Herein lies another limitation of the above notion of distance. 
%To deal with the above mention limitation of distance, in this section 
Here we construct a pseudo metric space on the set of vertices using the units. Suppose that $H$ is a hypergraph. Consider the graph $G_H$, defined by $V(G_H)=\mathfrak{U}(H)$ and $$E(G_H)=\{\{W_{E_i},W_{E_j}\}\in {\mathfrak{U}(H)}^2:W_{E_i}\text{~and~}W_{E_j} \text{~are neighbours~}\}.$$

For any hypergraph $H$, an \textit{unit-walk}, and \textit{unit-path} between $u,v$ of length $l$ are respectively the walk and path of length $l$ between $W_{E_u(H)},W_{E_v(H)}$ in $G_H$ and  the \textit{unit-distance} $d^U_H:V(H)\times V(H)\to [0,\infty)$ is defined by $d^U_H(u,v)=$length of the smallest path between $W_{E_u(H)},W_{E_v(H)}$ in $G_H$. Here $d^U_H(u,v)=0$ if $u,v$ belongs to the same unit.

For any graph $G$, the distance $d_G:V(G)\times V(G)\to[0,\infty)$ is defined as $d_G(u,v)=$length of smallest path between $u,v(\in V(G))$.
Evidently, for any hypergraph $H$, we have $d^u_H(u,v)=d_{G_H}(W_{E_u(H)}, W_{E_v(H)})$ for all  $u,v\in V(H)$.
We now generalize some notions of distance-based graph-centrality for the hypergraph using the notion of unit distance. For any connected hypergraph $H$, and $v\in V(H)$ the \textit{unit-ecentricity} $\xi_H\in \mathbb{C}^{V(H)}$ defined by $\xi_H(v)=\max\limits_{u\in V(H)}d^U_H(u,v)$. The \textit{unit-diameter} of $H$ is $\max\limits_{v\in V(H)}\xi_H(v)$ and the \textit{unit-radius} is $\min\limits_{v\in V(H)}\xi_H(v)$. Thus, for any hypergraph $H$, the number of hyperedges is greater than or equal to the unit-diameter of $H$.
%\begin{prop}	Let $H$ be a hypergraph. The number of hyperedges is greater than or equal to the unit-diameter of $H$.	\end{prop}
%	\begin{proof}	    If the unit-radius of $H$ is $r$ then there exists $u,v\in V(H)$ such that $d^U_H(u,v)=r$. Thus, there exists an minimal path $v_0v_1\ldots v_r$ of length $r$ between $v_0=u$ and $v_r=v$ in $G_H$. Clearly, $v_0,\ldots,v_r$ are $r$ distinct vertices and this minimal path of length $r$ ensures the existence of a walk $v_0e_1v_1\ldots e_rv_r$. We claim that $e_1,\ldots,e_r$ are distinct. If possible let $e_i=e_{i+j}$ for some $1\le i<i+j\le r$. In that case $v_0,v_1,\ldots v_i v_{i+j} v_{i+j+1},\ldots,v_r$ forms a path in $G_H$ of length $< r$, a contradiction to the fact $d^U_H(u,v)=r$.	\end{proof}
%\textit{Girth} of a graph $G$ is the length of a shortest cycle contained in the graph.
For a hypergraph $H$, we define the \textit{unit-girth} of $H$ as the girth of $G_H$.
%By \Cref{edge-unit}, 
For each $e\in E(H)$, there exists $n_e\in \n$ such that $e=\bigcup\limits_{i=1}^{n_e}W_{E_i}$ where $W_{E_i}\in \mathfrak{U}(H)$ for all $i=1,\ldots, n_e$. The \textit{maximum partition number} of $H$ is $\max\limits_{e\in E(H)}n_e$ and \textit{minimum partition number} of $H$ is $\min\limits_{e\in E(H)}n_e$. Thus, if the minimum partition number of a hypergraph $H$ is at least $3$ then
the minimum partition number of $H$ is at least equal to the unit-grith of $H$, which is equal to $3$.
%\begin{prop} If the minimum partition number of a hypergraph $H$ is at least $3$ then the minimum partition number of $H$ is at least equal to the unit-grith of $H$.	\end{prop}
%	\begin{proof}  Let $g$ be the unit-girth of $H$. It is enough to prove that $n_e\ge g$ for all $e\in E(H)$.	    If possible let there exists $e\in E(H)$ such that	    $n_e<g$. Thus, $e=\bigcup\limits_{i=1}^{n_e}W_{E_i}$. This leads us to a cycle $W_{E_1},W_{E_2},\ldots,W_{E_{n_e}},W_{E_1}$ of length $n_e<g$ in $G_H$, a contradiction. Therefore, $n_e\ge g$ for all $e\in E(H)$.	%\end{proof}	The \textit{clique number} of a graph $G$ is the cardinality of the set of vertices in a maximal complete subgraph of $G$. 
The \textit{unit-clique number} of a hypergraph is the clique number of $G_H$.
\begin{prop}
	The maximum partition number of a hypergraph $H$ can be, at most, the unit-clique number of $H$.
\end{prop}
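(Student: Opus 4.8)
The plan is to exhibit, inside $G_H$, an explicit clique whose size equals the maximum partition number; since the unit-clique number is by definition the clique number of $G_H$, this immediately yields the desired inequality.

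First I would select a hyperedge $e\in E(H)$ realising the maximum partition number, so that $n_e=\max\limits_{e'\in E(H)}n_{e'}$. By \Cref{edge-unit}, this hyperedge decomposes as a disjoint union of units,
$$e=\bigsqcup\limits_{i=1}^{n_e}W_{E_i},\qquad W_{E_i}\in\mathfrak{U}(H),$$
and these $n_e$ units are pairwise distinct by construction, hence give $n_e$ distinct vertices of $G_H$.

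The key step is the observation that any two of these units are neighbour units. Indeed, for distinct $W_{E_i},W_{E_j}$ occurring in the decomposition of $e$, both are contained in $e$, so $W_{E_i}\cup W_{E_j}\subseteq e$; this is exactly the defining condition for $W_{E_i}$ and $W_{E_j}$ to be neighbour units. Consequently $\{W_{E_i},W_{E_j}\}\in E(G_H)$ for every pair $i\neq j$, so that $\{W_{E_1},\dots,W_{E_{n_e}}\}$ is a set of $n_e$ pairwise-adjacent vertices in $G_H$, i.e.\ a clique of size $n_e$. Therefore the clique number of $G_H$, which is the unit-clique number of $H$, is at least $n_e$, the maximum partition number of $H$, and the proposition follows.

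I do not anticipate any genuine obstacle here: the entire argument is a direct unwinding of the definitions of partition number, neighbour units, and $G_H$. The only point requiring a moment's care is ensuring that the units in the decomposition of $e$ are genuinely distinct, so that they contribute $n_e$ distinct vertices of $G_H$ rather than fewer; this is guaranteed by the disjoint-union decomposition supplied by \Cref{edge-unit}.
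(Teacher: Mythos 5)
Your proof is correct and is essentially the paper's argument: the paper proves the same fact by contradiction (assuming $n_e>c_H$ and noting that $e=\bigcup_{i=1}^{n_e}W_{E_i}$ yields an $n_e$-clique in $G_H$), while you simply present the same clique construction directly. Your direct phrasing, including the explicit check that the units in the decomposition are pairwise distinct and pairwise neighbour units, is if anything slightly more complete than the paper's.
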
	
\begin{proof}Let $H$ be a hypergraph and $c_H$ be the unit-clique number of $H$.
	It is enough to prove $n_e\le c_H$ for all $e\in E(H)$. If possible, let there exists $e\in E(H)$ such that $n_e>c_H$. Thus, $e=\bigcup\limits_{i=1}^{n_e}W_{E_i}$ and this leads us to a $n_e$-clique in $G_H$ with $n_e> c_H$, a contradiction. Therefore,  $n_e\le c_H$ for all $e\in E(H)$.
\end{proof}

For any graph $G$, consider the usual adjacency matrix $\tilde A_G=\left(a_{uv}\right )_{u,v\in V(G)}$ defined by 
$$a_{uv}=\begin{cases}
	1 &\text{~if~} \{u,v\}\in E(G) ,\\
	0 &\text{~otherwise.}
\end{cases}$$
Note that if $\delta_{E(G)}(e)=\frac{|e|^2}{|e|-1}$ for all $e\in E(G)$ and $\delta_{V(G)}(v)=1$ for all $v\in V(G)$ then the matrix of the general adjacency operator with respect to usual basis becomes this usual adjacency matrix, that is for choice mentioned above of $\delta_{E(H)},\delta_{V(H)}$, we have $A_Gx=\tilde A_Gx$, for all $x\in \mathbb{C}^{V(G)}$.
Since in a graph $G$, the $(u, v)$-th entry of $\tilde A_G^k$ is the number of walk of length $k$ between $u,v\in V(G)$. For any hypergraph $H$, and $u,v\in V(H)$, the $(u,v)$-th entry of $\tilde A^k_{G_H}$ is the number of unit-walk of length $k$ between $u,v$.
Using the concept of unit distance, we can incorporate some distance-based graph centralities into the hypergraph theory. For instance, in a connected hypergraph $H$, the \textit{unit-closeness centrality} of each $v(\in V(H))$ is defined by $c_U(v)=\frac{1}{\sum\limits_{u\in V(H)}d^U_H(u,v) }$, the \textit{unit-eccentricity based centrality} of $v(\in V(H))$ is defined as $c_E(v)=\frac{1}{\xi_H(v)}$.

%\subsection{Hypergraph invariant in terms of unit}{\label{sec-hyp-inv}}
\subsection{Proper colouring, independent set and contraction of a hypergraph}\label{hyp-color}
A hypergraph $H $ is called a simple hypergraph if for any two $e,f\in\mathfrak{U}(H)$, $e\subseteq f$ if and only if $e=f$. A \textit{proper colouring} of a hypergraph $H$ is a vertex colouring such that no $e\in E(H)$ %contains all the vertices of the same colour
is monochromatic. If a proper colouring of a hypergraph $H$ is possible with $n$-colours, then $H$ is called \textit{$n$-colourable hypergraph}. The \textit{chromatic number} of $H$, $\chi(H)$ is the minimum number of colours required for proper colouring of $H$.
\begin{prop}\label{col-contract}
	Let $H$ be a simple connected hypergraph $H$ with $|E(H)|>1$. If $\hat H$ is $n$-colourable then $H$ is $n$-colourable.
\end{prop}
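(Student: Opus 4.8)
The plan is to pull back a proper colouring of $\hat H$ along the projection $\pi\colon V(H)\to\mathfrak{C}(\mathfrak{U}(H))$. Given a proper $n$-colouring $c\colon V(\hat H)\to\{1,\ldots,n\}$, I would define $c'\colon V(H)\to\{1,\ldots,n\}$ by $c'(v)=c(\pi(v))$ and then verify that $c'$ is a proper colouring of $H$, i.e.\ that no hyperedge of $H$ is monochromatic under $c'$. Since proper colouring of a hypergraph only forbids monochromatic hyperedges (it places no constraint on individual pairs), collapsing several vertices to one colour is harmless; the entire content lies in controlling what happens to a hyperedge after projection.

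The hard part, and the only step that uses all three hypotheses, is the structural lemma that every hyperedge decomposes into at least two distinct units, i.e.\ $n_e\ge 2$ in the notation of \Cref{edge-unit}. First I would rule out $n_e=1$: suppose $e=W_E$ is a single unit with generating set $E$. Since $e\in E_v(H)=E$ for each $v\in e=W_E$, we have $e\in E$. If $|E|\ge 2$, pick $f\in E$ with $f\ne e$; then $W_E\subseteq\bigcap_{g\in E}g\subseteq f$, so $e\subseteq f$ with $e\ne f$, contradicting simplicity. If instead $E=\{e\}$, then every vertex of $e$ lies in no other hyperedge, so $e$ is vertex-disjoint from all other hyperedges; combined with $|E(H)|>1$ this contradicts connectedness. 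Hence $n_e\ge 2$ for every $e\in E(H)$.

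Next I would upgrade ``$e$ contains two distinct units'' to ``$\hat\pi(e)$ contains at least two vertices of $\hat H$''. Writing $W_{E_1},W_{E_2}\subseteq e$ with $W_{E_1}\ne W_{E_2}$, we have $W_{E_1}\cup W_{E_2}\subseteq e$, so these are neighbour units. By \Cref{twin_nbd} neighbour units are never twins, hence $\mathfrak{C}(W_{E_1})\ne\mathfrak{C}(W_{E_2})$; choosing $u\in W_{E_1}$ and $v\in W_{E_2}$ gives $\pi(u)\ne\pi(v)$, so $|\hat\pi(e)|\ge 2$.

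The colouring argument is then immediate. Fix $e\in E(H)$. Then $\hat\pi(e)\in E(\hat H)$ has at least two elements, so properness of $c$ forces $\hat\pi(e)$ to be non-monochromatic: there exist $a,b\in\hat\pi(e)$ with $c(a)\ne c(b)$. Writing $a=\pi(u)$ and $b=\pi(v)$ for $u,v\in e$, we obtain $c'(u)=c(a)\ne c(b)=c'(v)$, so $e$ is not monochromatic under $c'$. As $e$ was arbitrary, $c'$ is a proper $n$-colouring of $H$. The whole proof hinges on the lemma of the second paragraph; everything after it is a routine transfer of the colouring through $\pi$.
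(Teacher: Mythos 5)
Your proof is correct, and its operative part --- pulling the colouring back along $\pi$ --- is exactly the paper's proof: the paper colours $\pi^{-1}(V_i)$ with the $i$-th colour and observes that $e'\subseteq\pi^{-1}(V_i)$ would force $\hat\pi(e')\subseteq V_i$, contradicting properness in $\hat H$. The genuine difference is your structural lemma that every hyperedge contains at least two units lying in distinct twin-classes, so that $|\hat\pi(e)|\ge 2$; the paper proves nothing of the sort, and this lemma is in fact logically superfluous for the stated implication. If some $\hat\pi(e)$ were a singleton, then $\hat H$ would admit no proper colouring at all and the proposition would hold vacuously; and whatever the size of $\hat\pi(e)$, properness of the \emph{given} colouring of $\hat H$ already forbids it from being monochromatic, which is all the transfer step uses. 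So your ``hard part'' is really a proof that the hypotheses (simplicity, connectedness, $|E(H)|>1$) make the statement non-vacuous --- your two-step argument ($n_e\ge 2$ via simplicity and connectedness, then $\mathfrak{C}(W_{E_1})\ne\mathfrak{C}(W_{E_2})$ for neighbour units via \Cref{twin_nbd}, correctly accounting for the fact that $\hat H$ identifies twin units, not just units) is sound, and it is what gives the ensuing corollary $\chi(H)\le\chi(\hat H)$ actual content --- but the colouring transfer itself, which is where you and the paper coincide, needs none of it.
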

\begin{proof}
	Let $\hat H$ be $n$-colourable and $V_i$ be the set of vertices of $\hat H$, coloured by the $i$-th colour  %$V(\hat H)=\bigcup\limits_{i=1}^nV_i$, where $V_i$ is all $v\in V(\hat H)$, that is coloured with $i$-th the colour,
	for all $i=1,\ldots, n$.     
	Since %for all $i=1,\ldots, k$, 
	$e\not\subseteq V_i$ for any $e\in E(\hat H)$, thus, $e^\prime\not\subseteq\pi^{-1}(V_i)$ for all $e^\prime \in E(H)$. Thus, we can colour all the vertices in $\pi^{-1}(V_i)$ with the $i $-th colour, for all $i=1,\ldots, n$ and therefore, $H$ is $n$-colourable.
\end{proof}
Now, we have the following result.
\begin{cor}
	For any simple connected hypergraph $H$ with $|E(H)|>1$, $\chi(H)\le\chi(\hat H)\le |\mathfrak{C}(\mathfrak{U}(H))|$.
\end{cor}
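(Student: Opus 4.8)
The plan is to treat the two inequalities separately: the left one is essentially immediate from the preceding proposition, while the right one requires a short verification that the contraction $\hat H$ is colourable at all before the trivial vertex-count bound can be applied.

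For the left inequality $\chi(H)\le\chi(\hat H)$ I would simply invoke \Cref{col-contract}. Setting $n=\chi(\hat H)$, the hypergraph $\hat H$ is by definition $n$-colourable, so \Cref{col-contract} yields that $H$ is $n$-colourable. Hence $\chi(H)\le n=\chi(\hat H)$, and nothing further is needed for this half.

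For the right inequality $\chi(\hat H)\le|\mathfrak{C}(\mathfrak{U}(H))|$, the guiding idea is that assigning a distinct colour to each vertex of $\hat H$ uses exactly $|V(\hat H)|=|\mathfrak{C}(\mathfrak{U}(H))|$ colours and is a proper colouring \emph{provided} no hyperedge of $\hat H$ is a singleton. So the real content is to rule out singleton hyperedges. A hyperedge $\hat\pi(e)$ is a singleton precisely when all units comprising $e$ lie in one single $\mathfrak{R}_{tw}$-class. However, any two distinct units contained in a common hyperedge $e$ satisfy $W_E\cup W_F\subseteq e$ and are therefore unit-neighbours, while \Cref{twin_nbd} asserts that twin units are never unit-neighbours. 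Thus $e$ contains at most one unit from each twin-class, and a singleton $\hat\pi(e)$ forces $e$ to consist of a single unit $W_E$. It then remains to argue that no hyperedge equals a single unit: if $e=W_E$ with generating set $E$, then every $f\in E$ satisfies $W_E=e\subseteq f$, so simplicity (the antichain property) forces $f=e$ and hence $E=\{e\}$; consequently every vertex of $e$ lies in $e$ and in no other hyperedge, making $e$ an isolated component, which contradicts connectedness of $H$ since $|E(H)|>1$. Therefore $\hat H$ has no singleton hyperedges, the all-distinct colouring is proper, and $\chi(\hat H)\le|\mathfrak{C}(\mathfrak{U}(H))|$.

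I expect the main obstacle to be exactly this last verification: without it $\chi(\hat H)$ could be ill-defined, since a singleton hyperedge can never be rendered non-monochromatic. It is worth noting that all three hypotheses enter precisely here — simplicity kills the case $|E|>1$, connectedness together with $|E(H)|>1$ kills the isolated-edge case, and \Cref{twin_nbd} is what reduces a singleton image $\hat\pi(e)$ to a single unit in the first place.
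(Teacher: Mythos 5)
Your proof is correct, and both halves follow the route the paper intends: the left inequality is exactly the paper's (implicit) argument, namely applying \Cref{col-contract} with $n=\chi(\hat H)$, and the right inequality is the vertex-count bound $\chi(\hat H)\le|V(\hat H)|=|\mathfrak{C}(\mathfrak{U}(H))|$. The paper itself states the corollary with no written proof, silently taking for granted that assigning distinct colours to the vertices of $\hat H$ is a proper colouring; your verification that $\hat H$ has no singleton hyperedges --- via \Cref{twin_nbd} to collapse a singleton image $\hat\pi(e)$ to a single unit, then simplicity to force $E=\{e\}$, then connectedness with $|E(H)|>1$ to get a contradiction --- is a genuine gap-fill rather than a detour, and it is exactly where all three hypotheses of the corollary are consumed. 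This makes your write-up strictly more complete than the paper's; the only cosmetic remark is that the paper's definition of simple hypergraph contains a typo ($e,f\in\mathfrak{U}(H)$ should read $e,f\in E(H)$), and your reading of it as the antichain condition on hyperedges is the correct one and the one your argument needs.
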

%  Each simple connected hypergraph $H$ with $|E(H)|>1$ is $|\mathfrak{C}(\mathfrak{U}(H))|$-colourable and therefore, for simple hypergraph $H$ with $|E(H)|>1$, we have $\chi(H)\le |\mathfrak{C}(\mathfrak{U}(H))|$. 
The equality of the above Corollary holds for many hypergraphs; for example, the chromatic number of hyperflower, blow-up of any bipartite graph (in which a unit replaces each vertex of the bipartite graph) are equal to $\chi(H)=| \mathfrak{C}(\mathfrak{U}(H))|=2$. For the power graph of any bipertite graph, $ | \mathfrak{C}(\mathfrak{U}(H))|=3$ but $\chi(H)=2 $.
% If $H$ is a hyperflower, then  $\chi(H)= |\mathfrak{C}(\mathfrak{U}(H))|=2$, that is the chromatic number attains the upper bound in this case.
Any upper bound of the chromatic number of the smaller hypergraph $\hat H$ becomes an upper bound of the same for the bigger hypergraph $H$. We refer the reader to \cite{MR4452952,bretto2013hypergraph} and references therein for different upper bounds of chromatic numbers, which can be used as upper bounds for $\chi(\hat H)$. 
% It is evident from \Cref{col-contract}, that any upper bound of $\chi(\hat H)$ is an upper bound of $\chi(H)$. We refer the reader to \cite{MR4452952,bretto2013hypergraph} for the upper bounds of chromatic numbers of hypergraphs. 
For any hypergraph $H$, a set $U\subseteq V(H)$ is said to be an \textit{independent set} if $U$ contains no hyperedge $e$.
If there exists an independent set $U$ in $V(H)$ then $H$ is $|V(H)|-|U|+1$-colourable.%Since for a simple hypergraph $H$, the cardinality of a maximal independent set is $rank(H)-1$ and $\hat H$ is also a simple hypergraph,
 Thus, $ \chi(H)\le |V(\hat H)|-rank(\hat H)+2$, where $rank(H)=\max\limits_{e\in E(H)}|e|$. 
Since $\hat \pi:E(H)\to E(\hat H)$ is an onto map, we have the following Proposition.
\begin{prop}
	Let $H$ be a simple connected hypergraph $H$ with $|E(H)|>1$. For any independent set $U\subseteq V(\hat H)$ in $\hat H$, $\pi^{-1}(U)$ is an independent set in $H$. 
\end{prop}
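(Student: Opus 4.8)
The plan is to prove the statement directly from the definitions by contraposition: I will show that any hyperedge of $H$ contained in $\pi^{-1}(U)$ would force a hyperedge of $\hat H$ to lie inside $U$, contradicting the independence of $U$ in $\hat H$. The whole argument rests on how the surjection $\hat\pi:E(H)\to E(\hat H)$ interacts with the vertex map $\pi:V(H)\to\mf{C}(\mf{U}(H))$, so no spectral or structural machinery from the earlier sections is required.

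First I would unwind the definition of the preimage: by definition $v\in\pi^{-1}(U)$ precisely when $\pi(v)\in U$. Hence, assuming for contradiction that $\pi^{-1}(U)$ is not independent, there is some $e\in E(H)$ with $e\subseteq\pi^{-1}(U)$, and then $\pi(v)\in U$ for every $v\in e$. Collecting these images gives $\hat\pi(e)=\{\pi(v):v\in e\}\subseteq U$.

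Next I would invoke the definition of the contraction, namely $E(\hat H)=\{\hat\pi(e):e\in E(H)\}$, which yields $\hat\pi(e)\in E(\hat H)$. Thus $\hat\pi(e)$ is a genuine hyperedge of $\hat H$ sitting inside $U$, contradicting the hypothesis that $U$ is an independent set of $\hat H$. Consequently no hyperedge of $H$ can lie within $\pi^{-1}(U)$, i.e. $\pi^{-1}(U)$ is independent in $H$.

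The argument is essentially bookkeeping with the definitions, so I do not anticipate a genuine obstacle; the only point deserving a moment's care is confirming that $\hat\pi(e)$ is truly an element of $E(\hat H)$ rather than merely a subset of $V(\hat H)$, which is immediate once $e$ is recognised as a hyperedge of $H$. It is worth noting that neither simplicity, connectedness, nor $|E(H)|>1$ is actually used in this direction, so I would expect the implication to hold for an arbitrary hypergraph $H$, and I would flag that the stated hypotheses are carried over only for uniformity with the surrounding results.
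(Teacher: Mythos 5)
Your proof is correct and matches the paper's intent: the paper states this proposition without a formal proof, justifying it by the observation that $\hat\pi:E(H)\to E(\hat H)$ maps hyperedges to hyperedges, which is exactly the definitional fact your contrapositive argument unwinds (if $e\subseteq\pi^{-1}(U)$ then $\hat\pi(e)\in E(\hat H)$ lies inside $U$). Your side remark is also accurate --- simplicity, connectedness, and $|E(H)|>1$ play no role here and are carried over only from the surrounding colouring results.
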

 \begin{proof}  If possible let $\pi^{-1}(U)$ is not an independent set in $H$ then $e\subset \pi^{-1}(U)$ for some $e\in E(H)$. Thus, $\hat \pi(e)\subset U$ for $\hat\pi(e)\in E(H)$, a contradiction. Therefore, $\pi^{-1}(U)$ is independent set.    \end{proof}
In a simple connected hypergraph $H$ with $|E(H)|>1$, for each $a\in \mathfrak{C}(\mathfrak{U}(H))$, the set $\pi^{-1}(a)$ is an independent set. If $H$ is a simple hypergraph with the minimum partition number $m$ then for any $m-1$-tuple, $a_1,\ldots,a_{m-1}$, where $a_i\in \mathfrak{C}(\mathfrak{U}(H))$ for all $i=1,2,\ldots, m-1$, the set $\pi^{-1}(a_1)\cup\ldots\cup \pi^{-1}(a_{m-1})$ is an independent set.
%The cardinality of the maximal independent set in $H$ is called the independent number of $H$ and is denoted by $\alpha(H)$. In a simple hypergraph $H$ with $|E(H)|>1$ and the minimum partition number $m$, we have $\alpha(H)\ge \max\limits_{\substack{S\subset\mathfrak{C}(\mathfrak{U}(H));\\|S|=m-1}}\sum\limits_{a\in S}|\pi^{-1}(a)|$. For any hyperflower $H$ with $1$-homogeneous center, $\alpha(H)$ attains this lowerbound and $\alpha(H)= \max\limits_{\substack{S\subset\mathfrak{C}(\mathfrak{U}(H));\\|S|=m-1}}\sum\limits_{a\in S}|\pi^{-1}(a)|=|U|$, where $U$ is the periphery of $H$.
\section{Discussion}\label{sec-discus}
Let $G$ and $H$ be two hypergraphs. A function $\mathfrak{f}:V(G)\to V(H)$ is called an hypergraph-isomorphism between $G$ and $H$ if $\mathfrak{f}$ is bijection and it induce another bijection $\hat{\mathfrak{f}}:E(H)\to E(H)$, defined as, $\hat{\mathfrak{f}}(e)=\{\mathfrak{f}(v):v\in e\}$ for all $e\in E(H)$. Thus, $E(H)=\{\hat{\mathfrak{f}}(e):e\in E(H)\}$.
A property of hypergraph $H$ is called a hypergraph-invariant if it is invariant under any hypergraph-isomorphism. For example, the order of the hypergraph $|V(H)|$, the number of hyperedges $|E(H)|$, the rank $r_H$ and corank $cr_H$.
Some hypergraph-invariant of a hypergraph $H$ involving our introduced building blocks are $|\mathfrak{U}(H)|$, $|V(\hat{H})|$, $|E(\hat H)|$, $|E(G_H)|$, $\max\limits_{W_{E_0}\in\mathfrak{U}(H)}|W_{E_0}|$, $\min\limits_{W_{E_0}\in\mathfrak{U}(H)}|W_{E_0}|$, maximum partition number, minimum partition number are hypergraph invariants. Another hypergraph invariant is the monotone non-decreasing sequence of unit-cardinalities of $H$, that is $\{|W_{E_1}|,\ldots,|W_{E_k}|\}$ where $\mathfrak{U}(H)=\{W_{E_1},\ldots ,W_{E_k}\}$ and $|W_{E_i}|\le |W_{E_{i+1}}|$ for all $i=1\ldots,k-1$.
%Since units are fundamental building blocks of a hypergraph, it is natural to expect the hypergraph-invariants to depend on the units.
For any hypergraph $H$, if $\mathfrak{C}(\mathfrak{U}(H))=\{a_1,a_2,\ldots,a_m\}$, and $a_i=\{W_{E^i_1},W_{E^i_2},\ldots,W_{E^i_{n_i}}\}$ then $|V(H)|=\sum\limits_{i=1}^m\sum\limits_{j=1}^{n_i}|W_{E^i_j}|$. If $|a_i|\le|a_{i+1}|$ then the monotone non-decreasing sequence $\{|a_1|,\ldots,|a_k|\}$ is an hypergraph invariant. The number of hyperedges $|E(H)|=\sum\limits_{\hat e\in E(\hat H)}|{\hat \pi}^{-1}(\hat e)|$. The number of connected components in $H$ is another hypergraph invariant, equal to the same $\hat H$. Here we have also introduced some distance-based invariants, namely, unit-radius, unit-diameter, unit-eccentricity, and unit-girth of a hypergraph.
% \subsection{Number of connected components}
%\subsection{Local automorphism:Unit-based symmetries in hypergraph}\label{sec-aut-unit}

%Let $H$ be a hypergraph.
A hypergraph symmetry(or hypergraph automorphism) of a hypergraph $H$ is a bijection $\phi$ on $V(H)$ that preserves the adjacency relation. That is, $e=\{v_1,\ldots,v_m\}\in E(H)$ if and only if $\hat\phi(e)=\{\phi(v_1),\phi(v_2),\ldots,\phi(v_m)\}\in E(H)$.

Total number of bijections on $W_{E_0}$ is $|W_{E_0}|!$. Each bijection $\phi:W_{E_0}\to W_{E_0}$, can be extended to an automorphism of $H$. % a bijection %$\bar\phi:V(H)\to V(H)$ defined by $$\bar{\phi}(v)=\begin{cases}\phi(v) & \text{~if~}v\in W_{E_0}\\ v&\text{~otherwise.} \end{cases} $$ Since, for each $e\in E(H)$, either $W_{E_0}\cup e=\emptyset$ or $W_{E_0}\subseteq e$ for all $e\in E(H)$, the bijection $\bar\phi$ on $V(H)$ preserves adjacency relation. Thus, each $\bar\phi$ is an automorphism.
Therefore, for any hypergraph $H$, each $W_{E_0}\in \mathfrak{U}(H)$ corresponds to $|W_{E_0}|!$ number of hypergraph automorphism.

%A hypergraph automorphism $\phi$ of a hypergraph $H$ is said to be inner product preserving if $\delta_{V(H)}(\phi(v))=\delta_{V(H)}(v)$ for all $v\in V(H)$ and $\delta_{E(H)}(\hat\phi(e)=\delta_{E(H)}(e)$ for al $e\in E(H)$.
%Since we know that each unit is a co-regular set, it is natural to ask if, after replacing  the unit $W_{E_0}$ by a co-regular set $U$ in \Cref{aut1}, the result holds or not.  Consider the hypergraph $H$

If $W_{E_i}, W_{E_j}$ are twin units in $H$ with $|W_{E_i}|=|W_{E_j}|$ then each bijection $\phi:W_{E_i}\to W_{E_j}$ induces a hypergraph automorphism.% $\bar{\phi}:V(H)\to V(H)$ of $H$ defined by  $$\bar\phi(v)=\begin{cases}   \phi(v)&\text{~if~~}v\in W_{E_i}\\   \phi^{-1}(v)&\text{~if~~}v\in W_{E_j}\\ v&\text{~otherwise.}   \end{cases}$$. Evidently, the map $\bar{\phi}$ is a bijection and it preserves adjacency because for each $e\in E(H)$, we have $$\{\bar\phi(v):v\in e\}=\begin{cases} \mathfrak{f}_{E_iE_j}(e)&\text{~if~}W_{E_i}\subseteq e\\  \mathfrak{f}^{-1}_{E_iE_j}(e)&\text{~if~}W_{E_j}\subseteq e\\   e &\text{~otherwise,}   \end{cases}$$  where $ \mathfrak{f}_{E_iE_j}$ is a canonical bijection. Therefore, $\bar\phi$ is a hypergraph automorphism.

The above-mentioned hypergraph automorphisms %are equal to the identity map in most of the part of $V(H)$ and
alter the vertices only in a tiny part of $V(H)$ (namely, a unit or a pair of twin units). We refer to these automorphisms as local automorphisms.
The composition of a finite number of  local automorphisms is also a hypergraph automorphism. Evidently, for each $a\in\mathfrak{C}(\mathfrak{U}(H))$, the set $\pi^{-1}(a)$ corresponds to a local automorphism. 

The centrality of a vertex in a hypergraph $H$ is a function on $V(H)$ that determines the vertices' importance. %Importance of vertices is a relative thing. Naturally, 
The notion of centrality changes with the idea of importance. If the importance of a vertex depends on the star of the vertex, then the corresponding centrality function is constant on each unit. For example, the degree centrality of $v\in V(H)$ is the $|E_v(H)|$ constant on each unit.
In the case of disease propagation, each individual is considered as vertices, and interacting communities are the hyperedges. In this network, all the vertices in a unit are equally vulnerable to the disease. Thus, one can rank the units according to their vulnerabilities and take safety measures on the whole unit instead of acting on individuals. Scientists are vertices in the scientific collaboration network, and research projects are the hyperedges. The unit distance can be a good measure of collaborative distance.

\section{Conclusion}
For any equivalence relation $\mathfrak{R}$ on the vertex set, the $\mathfrak{R}$-equivalence class affects the spectra of any $\mathfrak{R}$-compatible operators.
Here, we have provided one example of equivalence relation $\mathfrak{R}_s$ such that the general signless Laplacian, general adjacency and general Laplacian are $\mathfrak{R}_s$-compatible. This equivalence relation $\mathfrak{R}_s$ leads us to the notion of units, and units lead us to twin units. % more building blocks (named twin units, regular set, co-regular set).
Besides spectra of operators associated with a hypergraph, we have seen that they are also crucial for hypergraph automorphisms, hypergraph colouring, and random processes(like dynamical networks and random walks). Thus, finding other equivalence relations $\mathfrak{R}$ on the vertex set can help us identify more hypergraph building blocks.

In \Cref{1.unit}, we illustrate how the building blocks affect the structure of hypergraphs. The building blocks can be related to the hypergraph properties that are dependent on the hypergraph's structure. The spectra of hypergraphs, for example, are affected by the building blocks. We can also see that the building blocks are related to hypergraph colouring, hypergraph automorphisms, random walks on hypergraphs, and other applications. The study of building blocks has numerous applications. In dynamical networks on hypergraphs, for example, all dynamical systems in certain building blocks may exhibit similar behaviour. In the disease propagation model on hypergraphs, since all vertices in specific building blocks have the same expected hitting times, they form clusters of equally vulnerable people. We expect all nodes in the same unit to be informed simultaneously in the information spreading model. As  $\chi(H)\le \chi(\hat H)$, we can get sharper upper bounds of $\chi(H)$ using the existing upper bounds of  $\chi(\hat H)$. Since some building blocks are related to hypergraph automorphisms, we can use them to explore hypergraph symmetries. The concept of unit distance can be used to investigate collaborative distances in collaboration networks and some centralities in hypergraphs.

 \section*{Acknowledgement}
	The work of the author PARUI is supported by the University Grants Commission, India (Beneficiary Code/Flag: 	BININ00965055 A). 
	% \bibliographystyle{siam}
	% %\bibstyle{siam}
	% \bibliography{ref}
 
\end{document}